\newtheorem{thm}{Theorem}[section]
\newtheorem{prp}[thm]{Proposition}
\newtheorem{cor}[thm]{Corollary}
\newtheorem{lm}[thm]{Lemma}
\newtheorem{df}[thm]{Definition}
\newtheorem{rk}[thm]{Remark}
\newcommand{\m}[1]{\mathbb{#1}}
\newcommand{\q}[1]{\mathcal{#1}}
\newcommand{\wht}[1]{\widetilde{#1}}
\newcommand{\ep}{\varepsilon}
\newcommand{\wb}[1]{\overline{#1}}
\newcommand{\alp}{\alpha}
\newcommand{\qq}{2^*}
\newcommand{\R}{\mathbb{R}}
\newcommand{\rr}{\langle x\rangle}
\renewcommand{\tilde}{\widetilde}
\numberwithin{equation}{section}
\date{\today}
\begin{document}
	\title{Einstein constraint equations for Kaluza-Klein spacetimes}

	\begin{abstract}
		The aim of this article is to construct initial data for the Einstein equations on manifolds of the form $\m R^{n+1}\times \m T^m$, which are asymptotically flat at infinity, without assuming any symmetry condition in the compact direction. We use the conformal method to reduce the constraint equations to a system of elliptic equation and work in the near CMC (constant mean curvature) regime. The main new feature of our paper is the  introduction of new weighted Sobolev spaces, adapted to the inversion of the Laplacian on product manifolds.
	\end{abstract}

\author{C\'ecile Huneau }
\address{\'Ecole Polytechnique \& CNRS}
\email{cecile.huneau@polytechnique.edu}

\author{Caterina V\^alcu}
\address{Universit\'e Sorbonne Paris Nord}
\email{valcu@math.univ-paris13.fr}

\maketitle
	\section{Introduction}
	\par In this paper, we study the constraint equations for Einstein equations on manifolds of the form $\m R^{n+1}\times \m T^m$.
	Spacetimes with compact directions were introduced almost a century ago by Theodor Kaluza and Oskar Klein \cite{Kal21,Kle26} as an early attempt of unifying electromagnetism and general relativity in a simple, elegant way. They showed that Einstein vacuum equations on $\m R^{4+1}$ with a $U(1)$ symmetry group reduce to Einstein-Maxwell-Scalar field equations on $\m R^{3+1}$. In 1968, Kerner \cite{kern} extended this study to other compact isometry groups, obtaining in this way Einstein Yang-Mills equations (see also Appendix 8 in \cite{livrecb}). Spacetimes with compact directions now play an important role in supergravity and string theory (\cite{CanHorStrWit85}). The symmetry in the compact directions present in Kaluza-Klein theory at the beginning is relaxed in more modern theories (see \cite{kk} and \cite{duff}). The presence of a non symmetric compact direction $\m T^m$ yields the creation of modes (sometimes called Kaluza-Klein tower of particles).

	\par {Einstein equations on product manifold have been the object of recent mathematical study. In \cite{Wya18}, Zoe Wyatt proves the non-linear stability under perturbations of the non-compact directions of the product of Minkowski spacetime with a $m$-dimensional torus. A parallel result for the cosmological Kaluza-Klein spacetimes, where $\R^{3+1}$ is replaced with the $4$-dimensional Milne spacetime has been proved \cite{BraFajKro19}. Finally, stability has been proven for the product of higher-dimensional Minkowski spacetimes with a compact, Ricci-flat $K$ that admits a spin structure and a nonzero parallel spinor \cite{AndBluWyaYau20}. In all these papers, the initial data are assumed to solve the constraint equations, but the issue of the existence and behaviour of the initial data is not addressed. Additionally, we would like to mention the paper of Xianzhe Dai on the positive mass theorem for manifolds including those of Kaluza-Klein type \cite{Dai04}. As an example of the analysis of initial data on manifolds with other interesting asymptotics, we cite the work of Piotr T. Chru\'sciel, Rafe Mazzeo and Samuel Pocchiola, who work on cylindrical ends. \cite{chrmaz14, chrmazpoc13}.}
	
	\medskip
	\subsection{Preliminaries}
	\par The constraint equations on a Riemannian manifold $(M,\hat{g})$ establish the relationship between the metric $\hat{g}$ and the extrinsic curvature $\hat{K}$, which describes the embedding of $M$ in a surrounding spacetime. For our model, we also accept the presence of a Klein-Gordon field \footnote{{We believe that our study can be generalized to other matter fields, in the same way that it was previously done for asymptotically Euclidean manifold. However, this is not the main focus of the present paper.}}. The resulting system takes the form
	\begin{align*}
		R_{\hat{g}}+(tr_{\hat{g}} \hat{K})^2-||\hat{K}||_{\hat{g}}^2& =  2\rho, \\
		div_{\hat{g}} \hat{K}-\nabla_{\hat{g}}tr_{\hat{g}}\hat{K}& =  J,
	\end{align*}
	where $\rho$ is the energy density of the scalar field on $M$ and $J$ is the momentum density. The conformal method is a natural way to proceed when analyzing the constraint equations \cite{Lic44, ChoYor80}. Given a conformal change $\hat{g}=\varphi^{\qq-2}g$, $\varphi>0$, and the corresponding decomposition of $\hat{K}=\varphi^{-2}(\mathcal{L}_g W+U)+\frac{2}{n}\varphi^{q-2}g$, the constraint equations take the form
	
	\begin{equation}\label{sfce}
		\begin{split}
			\frac{4(d-1)}{d-2}\Delta_g \varphi + \left(R_g -\vert \nabla\psi \vert^2_g\right) \varphi &= \left(V(\psi)-\frac{d-1}{d}\tau^2\right) \varphi^{\qq-1}+\frac{\vert U +\mathcal{L}_g W\vert_g^2+|\pi|_g^2}{\varphi^{\qq+1}}\\
			\overrightarrow{\Delta}_g W &= \frac{d-1}{d}\varphi^{\qq} d\tau+\pi d\psi.
		\end{split}
	\end{equation}
	
	Here, $\tau=tr_{\hat{g}}\hat{K}$ is the mean curvature of the spacelike hypersurface $M$, $U$ is a symmetric $2$-tensor, $\psi$ and $\pi$ are scalar functions corresponding to the scalar field and its time derivative respectively, $V$ is a smooth scalar function representing the potential of the scalar field, and the positive scalar function  $u$ and vector field  $W$ are unknown. An important point to signal is that $d$ is the dimension of the entire manifold and $2^*=\frac{2d}{d-2}$ is the critical Sobolev exponent corresponding to dimension $d$. We have also denoted the covariant derivative with respect to $g$ by $\nabla_g$, the conformal Lie derivative acting on vector fields $W$ in $M$ as 
	\begin{equation*}
		(\mathcal{L}_g W)_{\mu\nu}=\nabla_{g\,\mu} W_\nu+\nabla_{g\,\nu} W_\mu-\frac{2}{d}div_g W g_{ij},\quad\forall\, \mu,\nu\in\overline{1,d}
	\end{equation*}
	and the conformal Laplace operator $\overrightarrow{\Delta_g}$ takes the form
	\begin{equation*}
		(\overrightarrow{\Delta}_g W)^\nu=-\nabla_{g\,\mu}\left( \nabla_g^\mu W^\nu+\nabla_g^\nu W^\mu -\frac{2}{d}g^{\mu\nu}\nabla_{g\, k} W^k \right),\quad \forall\,\mu,\nu\in\overline{1,d}.
	\end{equation*}
	\par Throughout this work, the Laplace-Beltrami operators are defined such that they have positive eigenvalues, and thus negative sign.
	\medskip
	\par We consider the product manifold $M= \m R^n\times \m T^m$ of dimension $d=n+m$. Let $(x^i)_{i\in\overline{1,n}}$ be the coordinates of $\R^n$ and $(\theta^j)_{j\in\overline{1,m}}$ that of $\m T^m$. We call $(M,\zeta)$ the \textit{flat product manifold}, where the metric $\zeta$ is defined as\footnote{{Our study also directly applies to the metrics $\zeta_\alpha=dx^2+ \alpha_1(d\theta^1)^2+..+ \alpha_m(d\theta_m)^2$, with $\alpha_i$ positive constants. Note that these metrics are not isometric with each other.}}
	$$\zeta=dx^2+  d\theta^2.$$
	The corresponding Laplacian takes the form $\Delta_\zeta = -\sum_{i=1}^n \partial^{2}_{x_i} - \sum_{j=1}^m\partial_{\theta_j}^2$.
	\medskip
	\par The behaviour of the Laplacian on $\R^{n}\times \m T^m$ is different from that of the same operator on the Euclidean space. First of all, when we speak of the behaviour at infinity of a function $u$ on $\R^n\times \m T^m$, we look at $u(x,\theta)\in \R^n\times \m T^m$ for $|x|\to\infty$ and for all $\theta\in\m T^m$. So while $\Delta_{\R^n}u$ decays faster than $u$ at infinity, the decay of $\Delta_{\m T^m}u$ does not necessarily follow suit. Another way to express this is that the zero and non-zero modes of $u$ behave differently under the Laplacian. In order to account for this, we define new weighted Sobolev spaces which, while reminiscent of the function spaces that usually appear in the treatment of asymptotically Euclidean manifolds \cite{cb81, Bar86}, also keeps track of the different decay rates.
	\begin{df} The weighted Sobolev space $W^p_{s,\delta,\gamma}(\R^n\times \m T^m)$ is the completion of the space $C^\infty_0(\R^n\times\m T^m)$ of compactly supported smooth functions for the norm
		$$\|u\|_{W^{p}_{s,\delta,\gamma}(\R^n\times \m T^m)}^p = \sum_{0\leq |\beta|\leq s} \int_{\m R^n} |\partial_x^\beta \bar{u}|^p \rr^{p(\delta+|\beta|)}\,dx
		+ \sum_{0\leq |\beta|\leq s} \int_{\m R^n\times \m T^m} |\partial_{x,\theta}^\beta (u-\bar{u})|^p\rr^{p\gamma}\,dxd\theta,$$
		where $\bar{u}(x)= \frac{1}{vol(\m T^m)}\int_{\m T^m} u(x,\theta)\,d\theta$ is the average on $\m T^m$ for a fixed $x$ and $\rr= \sqrt{1+|x|^2}$. Here, $1\leq p < \infty$ and $\delta,\gamma\in\R$.
	\end{df}
	\par It is useful to also introduce notations for the separate components used in the above definition. Let us denote the following weighted Sobolev norms
	\begin{equation*}
		\|f\|^p_{W^p_{s,\delta}(\R^n)}=\sum_{0\leq |\beta|\leq s} \int_{\m R^n} |\partial_x^\beta f|^p \rr^{p(\delta+|\beta|)}\,dx
	\end{equation*}
	and
	\begin{equation*}
		\|f\|^p_{\widetilde{W}^p_{s,\gamma}(\R^n\times \m T^m)}=\sum_{0\leq |\beta|\leq s} \int_{\m R^n\times \m T^m} |\partial_{x,\theta}^\beta f\vert^p\rr^{p\gamma}\,dxd\theta.
	\end{equation*}
	Similarly, we write the H\"older norms
	\begin{equation*}
		\|f\|_{\mathcal{C}^{m_1}_{\rho_1}(\R^n)}=\sum_{0\leq l\leq m_1}\sup_{\R^n}\left(\vert\partial^l_x f\vert\rr^{\rho_1+l}\right)
	\end{equation*}
	and
	\begin{equation*}
		\|f\|_{\tilde{\mathcal{C}}^{m_2}_{\rho_2}(\R^n\times \m T^m)}=\sum_{0\leq l\leq m_2}\sup_{\R^n\times \m T^m}\left(\vert\partial^l_{x,\theta} f\vert\rr^{\rho_2} \right).
	\end{equation*}

	\medskip
	\subsection{Main result}
	\par With the preliminaries out of the way, we are ready to present the main theorem of the article. It states that initial data in the vacuum setting exist in the near-CMC regime. We require the non-negativity of the scalar curvature and certain decay rates at infinity. These conditions may not optimal, but are rather convenient.
	\begin{thm}\label{thexvac} Let $g$ be a metric on $M=\m R^n\times \m T^m$, $n+m=d$, such that $g_{ij}-\zeta_{ij} \in W^p_{2,\sigma,\lambda}(M)$ and $R_g\geq 0$, where $d<p$. We assume that the asymptotic decay of the metric $g$ verifies
		\begin{equation*} 
			-\frac{n}{p}<\sigma \quad \text{ and }\quad 2- \frac{n+m}{p}<\lambda.
		\end{equation*}
		We consider the system corresponding to the vacuum constraint equations
		\begin{align}
			\label{ham}	\frac{4(d-1)}{d-2}\Delta_g \varphi + R_g \varphi & =  -\frac{d-1}{d}\tau^2\varphi^{\qq-1}+\frac{\vert \mathcal{L}_g W+ U\vert^2}{\varphi^{\qq+1}},\\
			\label{mom}	\overrightarrow{\Delta}_g W & =  \frac{d-1}{d}\varphi^{\qq}d\tau.
		\end{align}
		Here, the mean curvature $\tau\in W^p_{1,\delta+1,\gamma}$,  $U$ is a vector field in $W^p_{1,\delta+1,\gamma}$ and the scalar curvature $R_g\in W^p_{0,\delta+2,\gamma}$ is bounded and non-negative.	We assume the following conditions on their decay 
		\begin{equation*}
			\quad -\frac{n}{p} <\delta<-\frac{n}{p}-2+\min(n,2\gamma)\quad \text{ and }\quad 0<\gamma,
		\end{equation*}
		together with the coupling condition
		\begin{equation*}
			\delta-\lambda+2<\gamma<\delta+\lambda+\frac{n}{p}.
		\end{equation*}
		If the variation of the mean curvature is bounded,
		\begin{equation*}
			|| d\tau ||_{W^{p}_{0,\delta+2,\gamma}}\leq \varepsilon,
		\end{equation*}
		with $\varepsilon>0$ sufficiently small, we find solutions $(\varphi, W)$ to the vacuum constraint equations \eqref{ham}-\eqref{mom}, where $\varphi=A+u$, $u\in W^p_{2,\delta,\gamma}$, $A>0$, $\varphi>0$ and $W\in W^p_{2,\delta,\gamma}$.
	\end{thm}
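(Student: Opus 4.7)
My plan is to solve the coupled system by a Picard-type iteration, closing it with a contraction argument that exploits the smallness of $\|d\tau\|_{W^p_{0,\delta+2,\gamma}}$. Write $\varphi = A + u$ with $A>0$ a fixed constant (playing the role of the asymptotic value at infinity) and $u \in W^p_{2,\delta,\gamma}$. Starting from $(\varphi_0, W_0) = (A, 0)$, I will alternately solve the momentum constraint \eqref{mom} for $W_{n+1}$ given $\varphi_n$, and then the Hamiltonian constraint \eqref{ham} for $\varphi_{n+1} = A + u_{n+1}$ given $W_{n+1}$. The near-CMC hypothesis ensures the coupling between the two equations is small enough for the map $(\varphi_n, W_n)\mapsto (\varphi_{n+1}, W_{n+1})$ to be a contraction on a suitable ball.

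For the momentum step, I need the conformal Laplacian $\overrightarrow{\Delta}_g : W^p_{2,\delta,\gamma}\to W^p_{0,\delta+2,\gamma}$ to be an isomorphism; this is the linear theory on the product-weighted spaces that the paper has set up under the stated assumptions on $\sigma,\lambda$ and $\delta$. Given invertibility, I bound
\[
\|W_{n+1}\|_{W^p_{2,\delta,\gamma}} \lesssim \|\varphi_n^{\qq}\, d\tau\|_{W^p_{0,\delta+2,\gamma}} \lesssim (A^{\qq} + \|u_n\|_{W^p_{2,\delta,\gamma}}^{\qq})\,\|d\tau\|_{W^p_{0,\delta+2,\gamma}},
\]
using that $W^p_{2,\delta,\gamma}$ is a Banach algebra adapted to product manifolds (which follows from $p>d$ via Sobolev embedding into a H\"older space that is bounded on $M$). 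The coupling window $\delta-\lambda+2<\gamma<\delta+\lambda+n/p$ is precisely what allows the nonlinear products and the metric perturbation $g-\zeta$ to land in the correct target space.

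For the Hamiltonian step, substitute $\varphi = A + u$ in \eqref{ham}. The equation becomes
\[
\tfrac{4(d-1)}{d-2}\Delta_g u + R_g u = -R_g A - \tfrac{d-1}{d}\tau^2(A+u)^{\qq-1} + \tfrac{|\mathcal{L}_g W+U|^2}{(A+u)^{\qq+1}},
\]
a semilinear elliptic equation with right-hand side in $W^p_{0,\delta+2,\gamma}$. Since $R_g\geq 0$, the operator $\tfrac{4(d-1)}{d-2}\Delta_g + R_g$ is a positive perturbation of $\Delta_g$ and is invertible on the same weighted spaces. I then solve for $u$ by a fixed-point argument on a ball $\{\|u\|_{W^p_{2,\delta,\gamma}}\leq R\}$ of radius $R$ small enough that $A+u \geq A/2 > 0$ uniformly (which is automatic by Sobolev embedding thanks to $p>d$), so the inverse-power term $(A+u)^{-\qq-1}$ is controlled. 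Choosing $R$ of order $\|U\|^2/A^{\qq+1} + A$ and using that $\tau^2$ is already subcritical lets the ball map into itself.

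The main obstacle I anticipate is the careful bookkeeping of nonlinear products in the anisotropic spaces $W^p_{s,\delta,\gamma}$: the zero-mode part decays polynomially in $\langle x\rangle^{\delta}$ while the non-zero modes decay only in $\langle x\rangle^{\gamma}$, and one must check that operations such as $u\mapsto u^{\qq}$, $u\mapsto (A+u)^{-\qq-1}$, and multiplication by the metric perturbation $g-\zeta \in W^p_{2,\sigma,\lambda}$ preserve the correct weight class. This is exactly where the coupling conditions $\delta-\lambda+2<\gamma<\delta+\lambda+n/p$ and $\delta<-n/p-2+\min(n,2\gamma)$ intervene: the lower bound $\delta-\lambda+2<\gamma$ prevents the non-zero modes of a product from decaying too slowly compared to the zero modes, while the upper bound $\gamma<\delta+\lambda+n/p$ prevents the zero mode obtained by averaging a product from beating the weight $\delta+2$ dictated by the inverse Laplacian. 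Once these product estimates are in place, the contraction argument and convergence in the ball $\{\|u\|+\|W\|\leq C\ep\}$ follow from the smallness of $\ep = \|d\tau\|_{W^p_{0,\delta+2,\gamma}}$.
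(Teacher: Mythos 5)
Your outer strategy — iterating between the momentum and Hamiltonian constraints and using the near-CMC smallness to close the loop — is the paper's, but two key claims in your plan do not survive contact with the equations.

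First, the solution does not live in a ball of radius $C\varepsilon$. In the Hamiltonian constraint, the right-hand side evaluated at $u=0$ is
$-R_gA-\tfrac{d-1}{d}\tau^2A^{\qq-1}+\tfrac{|\mathcal{L}_gW+U|^2}{A^{\qq+1}}$,
which involves $\tau$ and $U$ directly, and neither of those is assumed small — only $\|d\tau\|$ is. So the linear solve already produces $u_1$ of size comparable to $\|\tau^2\|+\|U\|^2$, not $\varepsilon$. Your own earlier sentence acknowledges this (you propose $R$ of order $\|U\|^2/A^{\qq+1}+A$), but that directly contradicts the later claim of convergence in $\{\|u\|+\|W\|\leq C\varepsilon\}$. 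Consequently the positivity of $\varphi=A+u$ is not automatic from Sobolev embedding in a small ball; $u$ may well dip far below $-A$ if one only has elliptic estimates. This is exactly why the paper solves the Hamiltonian step with the barrier (sub/supersolution) method (Theorem~\ref{thham}) rather than a small-perturbation fixed point: the subsolution $\varphi_-$ is obtained from $\Delta_g\varphi_-+R_g\varphi_-+\tfrac{d-1}{d}\tau^2\varphi_-^{\qq-1}=0$ and its strict positivity is proved via Aleksandrov's uniqueness theorem — not via smallness.

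Second, the near-CMC hypothesis $\|d\tau\|_{W^p_{0,\delta+2,\gamma}}\leq\varepsilon$ is used in the paper for a subtler purpose than making the whole map contractive: it guarantees $\|W_i\|\lesssim M\varepsilon$ so that $a(W_{i-1})=|\mathcal{L}_gW_{i-1}+U|^2$ remains dominated by a \emph{fixed} function $\mathcal{A}$, which in turn lets one choose a \emph{single} supersolution $\varphi_+$ valid at every iteration step. With $\varphi_-\leq\varphi_i\leq\varphi_+$ uniform in $i$, the paper passes to the limit by compactness (weak $W^p_{2,\delta,\gamma}$ and $C^0$ convergence of a subsequence), not by showing the iteration is a contraction. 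A contraction argument is conceivable with more work, but it would need to be carried out in a ball of data-dependent (not $\varepsilon$-dependent) radius, with positivity enforced by barriers rather than by smallness. As written, the proposal's small-ball fixed point for the Hamiltonian equation and the announced $\varepsilon$-sized convergence set both fail, so you would need to replace that part of the argument with the barrier method and a uniform-supersolution construction as in the paper.
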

	\medskip
	\par\noindent\textbf{Comments on Theorem \ref{thexvac}.}
	\begin{enumerate}
		\item We wrote our theorem on manifolds diffeomorphic to $\m R^n \times \m T^n$, with one chart and one end, but the result can be generalized to asymptotically flat manifolds with several ends, as in the asymptotically Euclidean case.
		\item It would be interesting to generalize our result to $\m R^n \times K$ with $K$ a compact manifold. The main hiccup comes from the analysis of conformal Killing fields.
		\item We do not claim that our conditions on $(\sigma,\lambda,\delta,\gamma)$ are optimal. We can, however, ensure that they hold by simply taking $\lambda$ sufficiently large.
		\item Note also that the solutions we obtain have been taken to approach a constant on the torus $\m T^m$ as $|x|$ goes to infinity: $A>0$ in the case of $\varphi$ and $0$ in the case of $W$.
		\item The condition $R_g\geq 0$ can be relaxed, as in \cite{ChoBruIseYor00}.
	\end{enumerate}

	\par An immediate generalization of this result consists of finding initial data in the case of the constraint equations in the presence of a scalar field $\psi$. The most straightforward way to adapt the previous arguments is by asking that the coefficient of the zeroth order linear term $\tilde{h}:=R_g-|\nabla\psi|^2_g$ is non-negative and that the scalar field's potential $V$ satisfies $|V(\psi)|\leq\frac{d-1}{d}\tau^2$. These standard conditions ensure that $\Delta_g+h$ is injective and that solutions of the inhomogeneous elliptic equation $\Delta_g\varphi+\tilde{h}\varphi=v$ with $v\geq 0$ are positive.
	\begin{cor}The constraint equations in the presence of a scalar field \eqref{sfce} accept solutions given the hypotheses of Theorem \ref{thexvac} if, moreover, $|\nabla\psi|^2_g\leq R_g$ and $|V(\psi)|\leq\frac{d-1}{d}\tau^2$.
	\end{cor}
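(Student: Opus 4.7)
\medskip
\par\noindent\textbf{Proof strategy for the Corollary.}

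The plan is to run the proof of Theorem \ref{thexvac} essentially verbatim on the coupled system \eqref{sfce}, treating $\psi$, $\nabla\psi$, $\pi$ and $V(\psi)$ as fixed data in the same weighted Sobolev scale as $U$, $\tau$ and $R_g$ (for instance $\nabla\psi,\pi\in W^p_{1,\delta+1,\gamma}$, so that their quadratic combinations land in $W^p_{0,\delta+2,\gamma}$ by the multiplication properties of the spaces $W^p_{s,\delta,\gamma}$). The two hypotheses $|\nabla\psi|^2_g\leq R_g$ and $|V(\psi)|\leq\frac{d-1}{d}\tau^2$ are tailored precisely to preserve the two sign conditions on which the vacuum argument rests, so what I would do is go through each step of the vacuum proof and check that the modified ingredients still fit.

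The first sign check concerns the modified zeroth order coefficient $\tilde{h}:=R_g-|\nabla\psi|^2_g$, which remains non-negative by assumption. Hence the linearised Lichnerowicz operator $\frac{4(d-1)}{d-2}\Delta_g+\tilde{h}$ inherits from $\frac{4(d-1)}{d-2}\Delta_g+R_g$ the isomorphism property on the weighted Sobolev scale and the maximum principle that is used to construct sub- and super-solutions and to guarantee positivity of $\varphi$. The second sign check concerns the coefficient $V(\psi)-\frac{d-1}{d}\tau^2$ of the $\varphi^{\qq-1}$ term, which remains non-positive; consequently the right-hand side of the Hamiltonian constraint is still the sum of a monotone term with the "good" sign and a non-negative source $|U+\mathcal{L}_g W|^2+|\pi|_g^2$, and the same sub/super-solution machinery and positivity arguments apply.

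For the momentum equation the only new feature is the additive source $\pi\, d\psi$, which is independent of $(\varphi,W)$ and lies in $W^p_{0,\delta+2,\gamma}$ by the product estimates on $W^p_{s,\delta,\gamma}$. Inverting $\overrightarrow{\Delta}_g$ therefore gives $W$ as a function of $\varphi$ with the same estimates as in the vacuum case plus a fixed additive piece. This extra piece need not be small: only the $\varphi$-dependence through $\frac{d-1}{d}\varphi^{\qq}d\tau$ has to be, and that is still ensured by the near-CMC smallness $\|d\tau\|_{W^p_{0,\delta+2,\gamma}}\leq\varepsilon$.

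The main --- in fact the only --- obstacle I anticipate is purely bookkeeping: verifying that the four new terms $|\nabla\psi|^2_g\,\varphi$, $V(\psi)\varphi^{\qq-1}$, $|\pi|^2_g\varphi^{-\qq-1}$ and $\pi\, d\psi$ satisfy the same product, composition and Nemytskii estimates in $W^p_{s,\delta,\gamma}$ that govern their vacuum counterparts, so that the fixed-point iteration on $(\varphi,W)$ still closes in a ball of comparable size. Once these multiplier estimates are in place, the existence assertion follows as a direct extension of Theorem \ref{thexvac} with no new analytic ingredient.
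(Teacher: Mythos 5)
Your proposal is correct and matches the paper's approach: the paper only gives a brief remark before the corollary, pointing out that $\tilde{h}:=R_g-|\nabla\psi|^2_g\geq 0$ keeps $\Delta_g+\tilde h$ injective and preserves positivity, while $|V(\psi)|\leq\frac{d-1}{d}\tau^2$ keeps the coefficient of $\varphi^{\qq-1}$ with the good sign, so the vacuum argument carries over verbatim. You spell out the same two sign checks and additionally note the fixed source $\pi\,d\psi$ in the momentum constraint, which is consistent with the paper's intent even though the paper does not elaborate on it.
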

	Other similar results are conceivable, for example by fixing different matter sources. The work included here, however, mainly focuses on the intricacies of handling the vacuum case as a necessary first step. 
	\medskip
	\par\noindent\textbf{The structure of the paper.} In \textbf{Section \ref{sobsec}}, we derive the embedding and multiplication properties corresponding to the weighted Sobolev spaces, which establish the foundation for the rest of the proof. \textbf{Section \ref{secflat}} contains the bulk of the proof. We study the behaviour of elliptic operators, first on the flat manifolds, and then on asymptotically flat manifolds. Special attention needs to be given to the kernel of the conformal Laplacian: we want to show there are no nontrivial conformal Killing vector fields which decay to zero at infinity. Finally, in \textbf{Section \ref{sec_cont}}, we apply the classical barrier method to solve the scalar constraint equation, and then choose to subsequently use a fixed-point argument to find solutions to the entire system. An inverse function theorem would also have been possible.
	\medskip
	\par\noindent\textbf{Acknowledgements.} The authors would like to thank Romain Giquaud for helpful discussions in the process of writing this paper. CV is indebted to Fondation Math\' ematiques Jacques Hadamard and LabEx LMH. This work was supported by a public grant as part of the Investissement d'avenir project, reference ANR-11-LABX-0056-LMH, LabEx LMH. The first author was
supported by the ANR-19-CE40-0004
	
	\section{Weighted Sobolev spaces on a product manifold}\label{sobsec}
	As previously hinted, the introduction of the weighted Sobolev spaces is essential to the resolution of elliptic systems on the product manifold $\R^n\times \m T^m$. In particular, we need to see how classical multiplication and embedding properties apply in these spaces, much like in \cite{cb81}.
	\begin{prp}\label{multemb prop}
		The following multiplication property holds
		\begin{align}\label{mult prop}
			W^p_{s_1,\delta_1,\gamma_1}(\R^n\times \m T^m)\times W^p_{s_2,\delta_2,\gamma_2}(\R^n\times \m T^m)\subset	W^p_{s,\delta,\gamma}(\R^n\times \m T^m)
		\end{align}
		when the following conditions on the weights are verified
		\begin{gather}\label{hyp mult}
			s\leq s_1, s_2,\quad s\leq s_1+s_2-\frac{n+m}{p}, \quad \delta\leq \delta_1+\delta_2+\frac{n}{p}, \quad \delta+s \leq \gamma_1+\gamma_2,\\ \quad \gamma \leq \gamma_1+\gamma_2, \quad \gamma \leq min\left(s_i,\frac{n}{p}\right)+ \delta_i+ \gamma_j,\,\quad i \neq j. \nonumber
		\end{gather}
		Additionally, we prove the embedding properties of the weighted Sobolev and Hölder spaces:
		\begin{align}\label{comp}
			1. &\quad W^p_{s,\delta,\gamma}(\R^n\times \m T^m)\subset\subset W^p_{s',\delta',\gamma'}(\R^n\times \m T^m),\quad&\quad s<s',\quad\delta<\delta',\quad\gamma<\gamma',\\
			2. &\quad W^p_{s,\delta}(\R^n)\subset \mathcal{C}^{m_1}_{\rho_1}(\R^n),\quad&\quad m_1<s-\frac{n}{p},\quad \rho_1<\delta+\frac{n}{p}, \nonumber \\
			3. &\quad \tilde{W}^p_{s,\gamma}(\R^n\times \m T^m)\subset \tilde{C}^{m_2}_{\gamma}(\R^n\times \m T^m)\quad&\quad m_2<s-\frac{n+m}{p}. \nonumber
		\end{align}
	\end{prp}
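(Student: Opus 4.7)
The plan is to exploit the mode splitting that defines the space: write $u = \bar{u} + \tilde{u}$ with $\tilde{u} = u - \bar{u}$, and similarly $v = \bar{v} + \tilde{v}$, so that
\begin{equation*}
uv = \bar{u}\bar{v} + \overline{\tilde{u}\tilde{v}} + \bigl[\bar{u}\tilde{v} + \tilde{u}\bar{v} + (\tilde{u}\tilde{v} - \overline{\tilde{u}\tilde{v}})\bigr],
\end{equation*}
where the first two summands produce $\overline{uv}$ and the bracketed ones $uv - \overline{uv}$. Each building block is estimated separately, and each is where one (or several) of the hypotheses in \eqref{hyp mult} comes from. The pure zero-mode term $\bar{u}\bar{v}$ lives on $\R^n$ and is controlled by the classical multiplication property of Choquet-Bruhat--Christodoulou \cite{cb81} and Bartnik \cite{Bar86}, giving $s \leq s_1, s_2$, $s \leq s_1 + s_2 - n/p$, and $\delta \leq \delta_1 + \delta_2 + n/p$. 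For $\overline{\tilde{u}\tilde{v}}$, since $\partial_x^\beta$ commutes with averaging in $\theta$, Jensen's inequality yields
\begin{equation*}
|\partial_x^\beta \overline{\tilde{u}\tilde{v}}(x)|^p \lesssim \int_{\m T^m} |\partial_x^\beta(\tilde{u}\tilde{v})|^p\, d\theta,
\end{equation*}
so that after integrating against $\rr^{p(\delta + |\beta|)}$ the bound reduces to a product estimate in the spaces $\widetilde{W}^p$ over the full manifold, producing the remaining constraints $s \leq s_1 + s_2 - (n+m)/p$ and $\delta + s \leq \gamma_1 + \gamma_2$.

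For the mixed terms $\bar{u}\tilde{v}$ and $\tilde{u}\bar{v}$, which automatically have zero $\m T^m$-average and hence contribute only to $uv - \overline{uv}$, I would expand with Leibniz and bound the $\R^n$-dependent factor via a Sobolev embedding of $\bar{u}$: when $s_i > n/p$ use the Hölder embedding $W^p_{s_i,\delta_i}(\R^n) \subset \mathcal{C}^0_{\rho_i}(\R^n)$ with $\rho_i$ arbitrarily close to $\delta_i + n/p$; when $s_i \leq n/p$, use instead a weighted Sobolev embedding of $\bar{u}$ into an $L^q$ space and pair with Hölder's inequality against $\tilde{v} \in \widetilde{W}^p_{s_2,\gamma_2}$. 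Either case yields a bound of the form $\gamma \leq \min(s_i, n/p) + \delta_i + \gamma_j$, which is precisely the last condition in \eqref{hyp mult}. The remaining piece $\tilde{u}\tilde{v} - \overline{\tilde{u}\tilde{v}}$ is then dealt with directly by the product estimate in $\widetilde{W}^p$, giving $\gamma \leq \gamma_1 + \gamma_2$.

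The embedding properties are comparatively standard once the decomposition is in place. Property (1) follows from Rellich--Kondrachov on bounded sets combined with a tail estimate at spatial infinity provided by the strict weight gaps $\delta < \delta'$, $\gamma < \gamma'$, applied to zero and non-zero modes separately. Property (2) is Bartnik's classical weighted Sobolev--Morrey embedding on $\R^n$. Property (3) follows by covering $\R^n \times \m T^m$ with unit-size boxes on which $\rr$ is pointwise comparable to a constant, applying the standard Sobolev--Morrey embedding on each box, and exploiting that the $\gamma$-weight does not grow with $|\beta|$. The main obstacle — and the reason the hypothesis list is somewhat intricate — is the treatment of the mixed term $\bar{u}\tilde{v}$: one must trade the Euclidean decay of $\bar{u}$ (encoded by $\delta_1$) against the $\m T^m$-integrability of $\tilde{v}$ (encoded by $\gamma_2$), and the appearance of $\min(s_i, n/p)$ reflects precisely whether $\bar{u}$ can be placed in a Hölder or only in an $L^q$ space.
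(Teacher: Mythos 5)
Your proposal follows essentially the same strategy as the paper's proof: you decompose $uv$ into the pure zero-mode piece $\bar u\bar v$ (treated by the classical weighted Sobolev multiplication of \cite{cb81}), the average of nonzero modes $\overline{\tilde u\tilde v}$ (controlled by Jensen plus H\"older plus Sobolev, producing the constraints involving $(n+m)/p$ and $\gamma_1+\gamma_2$), the two mixed terms (with a case split on $s_i \lessgtr n/p$ to choose between a weighted $L^q$ embedding and an $L^\infty$/H\"older estimate of $\bar u$, yielding the $\min(s_i,n/p)+\delta_i+\gamma_j$ condition), and the remaining product of nonzero modes; and for the embeddings you use Rellich plus weight-decay at infinity. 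This matches the paper's proof in its decomposition, in where each hypothesis of \eqref{hyp mult} originates, and in the key lemmas invoked.
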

	This result can easily be generalized for $K$ an arbitrary compact set instead of $\m T^m$.
	\begin{proof}$\quad$
		\par\textit{The multiplication property \eqref{mult prop}.} Let $u\in W^p_{s_1,\delta_1,\gamma_1}(\R^n\times \m T^m)$ and $v\in W^p_{s_2,\delta_2,\gamma_2}(\R^n\times \m T^m)$. In our calculations, we can assume that $u,v \in \mathcal{C}^\infty_0(\R^n\times \m T^m)$ and obtain the desired property through a standard density argument.
		\par We begin by estimating the spherically symmetric term of the $W^{p}_{s,\delta,\gamma}$ norm of the product $uv$, 
		\begin{align*}
			\sum_{0\leq \vert\beta\vert\leq s} \int_{\R^n}\vert \partial^\beta_x \overline{uv}\vert^p \langle x \rangle^{p(\delta+|\beta|)}\,dx&=  \sum_{0\leq\vert\alpha\vert \leq\vert\beta\vert\leq s} \int_{\R^n}\vert  \overline{\partial^{\vphantom{\beta}\alpha}_x u\partial^{\beta-\alpha}_x v}\vert^p \langle x \rangle^{p(\delta+|\beta|)}\,dx \\
			& \leq  \sum_{0\leq\vert\alpha\vert \leq\vert\beta\vert\leq s} \int_{\R^n} \vert  {\partial^{\vphantom{\beta}\alpha}_x \overline{u}}\,{\partial_x^{\beta-\alpha}\overline{v}}\vert^p \langle x \rangle^{p(\delta+|\beta|)}\,dx\\
			&\quad + \sum_{0\leq\vert\alpha\vert \leq\vert\beta\vert\leq s} \int_{\R^n}\left\vert  \overline{\partial_x^{\vphantom{\beta}\alpha} (u-\bar{u})\partial_x^{\beta-\alpha}(v-\bar{v})}\right\vert^p \langle x \rangle^{p(\delta+|\beta|)}\,dx.
		\end{align*}
		Here, we used the basic identity $\overline{fg}=\bar{f}\bar{\vphantom{f}g}+\overline{(f-\bar{f})(g-\bar{g})}$ for $f,g\in\mathcal{C}^\infty_0(\R^n\times \m T^m)$. The first integral is controlled as in the paper of Choquet-Bruhat, Isenberg and York on the existence of initial data solution on asymptotically Euclidean manifolds \cite{ChoBruIseYor00}. To be precise, we obtain that
		$$\sum_{0\leq\vert\alpha\vert \leq\vert\beta\vert\leq s} \int_{\R^n} \vert  \partial^{\alpha}_x \overline{u}\,\partial_x^{\beta-\alpha}\overline{v}\vert^p \langle x \rangle^{p(\delta+\beta)}\,dx\ \lesssim \|\bar{u}\|_{W^p_{s_1,\delta_1}}\|\bar{v}\|_{W^p_{s_2,\delta_2}},$$
		if the following conditions are satisfied
		\begin{equation}\label{cond1}
			s\leq s_1,s_2, \quad s<s_1+s_2-\frac{n}{p}, \quad \delta\leq \delta_1+\delta_2+\frac{n}{p} .
		\end{equation}
		For the second integral, we use Jensen's inequality. We recall that the classical result states that for any $f$ an integrable function on $\m T^m$, 
		$$\vert\bar{f}\vert^p=\left \vert\frac{1}{{vol(\m T^m)}}\int_{\m T^m}f\,d\theta\right\vert^p\leq \frac{1}{{vol(\m T^m)}}\int_{\m T^m}\vert f\vert^p\,d\theta=\overline{\vert f\vert^p}.$$
		By combining the above with H\"older's inequality, and moreover with the fact that $\langle x\rangle^{\alpha_1}\leq C\langle x\rangle^{\alpha_2}$ for $0\leq \alpha_1\leq \alpha_2$, it follows that
		\begin{align*}
			&\int_{\R^n}\left\vert  \overline{\partial_{x}^{\alpha\vphantom{\beta}} (u-\bar{u})\partial^{\beta-\alpha}_{x}(v-\bar{v})}\right\vert^p \langle x \rangle^{p(\delta+|\beta|)} \\
			&\leq  \frac{1}{{vol(\m T^m)}}\int_{\R^n\times \m T^m}\vert \partial^\alpha_{x} (u-\bar{u})\partial_{x}^{\beta-\alpha}(v-\bar{v}) \vert^p \langle x \rangle^{p(\delta+|\beta|)} \\
			&\lesssim  \| \partial_{x}^\alpha(u-\bar{u})\langle x\rangle^\theta \|^p_{L^{pa}(\R^n\times \m T^m)}  \\
			&\quad\quad\quad\times \|\partial_{x}^{\beta-\alpha}(v-\bar{v})\langle x\rangle^\lambda \|^p_{L^{pb}(\R^n\times \m T^m)},
		\end{align*}
		with
		\begin{equation}\label{condint}
			\frac{1}{a}+\frac{1}{b}=1\quad\text{ and }\quad \delta+|\beta| \leq \theta+\lambda.
		\end{equation}
		The last condition is satisfied for all $|\beta|\leq s$ if
		\begin{equation}\label{condint2}
			\delta+s\leq \theta+\lambda.
		\end{equation}
		We apply the Sobolev embedding for ${W^{s_1,p}}(\R^n\times \m T^m)\subset {W^{\alpha,pa}}(\R^n\times \m T^m)$ with $s_1\in\R$ such that
		\begin{equation}\label{condint3}\frac{n+m}{p}-s_1<\frac{n+m}{pa}-\alpha, 
		\end{equation}
		to get
		\begin{align*}
			\|\partial_x^\alpha (u-\bar{u})\langle x \rangle ^\theta \|_{L^{pa}(\R^n\times \m T^m)} \leq \|(u-\bar{u})\langle x\rangle^\theta \|_{W^{\alpha,pa} (\R^n\times \m T^m) }&\leq \|(u-\bar{u})\langle x\rangle^\theta \|_{W^{s_1,p} (\R^n\times \m T^m) }.
		\end{align*}
		For		
		\begin{equation}\label{condint4}\theta<\gamma_1,
		\end{equation}
		we arrive at
		\begin{align*}
			\|(u-\bar{u})\langle x\rangle^\theta \|_{W^{s_1,p} (\R^n\times \m T^m) }\leq \|u-\bar{u}\|_{\tilde{W}^p_{s_1,\gamma_1}(\R^n\times \m T^m)}.
		\end{align*}
		Similarly, for the remaining term, we see that
		\begin{equation*}
			\|\partial_{x}^{\beta-\alpha}(v-\bar{v})\langle x\rangle^\lambda \|_{L^{pb}(\R^n\times \m T^m)}\leq C\|v-\bar{v}\|_{\tilde{W}^p_{s_2,\gamma_2}(\R^n\times \m T^m)},
		\end{equation*}
		where
		\begin{equation}\label{condint5}\frac{n+m}{p}-s_2<\frac{n+m}{pb}-(\beta-\alpha)
		\end{equation}
		and
		\begin{equation}\label{condint6}
			\lambda\leq \gamma_2
		\end{equation}
		Conditions \eqref{condint2}, \eqref{condint4} and \eqref{condint6} can be satisfied if and only if
		\begin{equation}
			\label{cond2} \delta+s\leq\gamma_1+\gamma_2
		\end{equation}
		and conditions \eqref{condint}, \eqref{condint3} and \eqref{condint5} can be satisfied if and only if
		\begin{equation}\label{cond3}
			s<s_1+s_2-\frac{n+m}{p}.
		\end{equation} 
		\par We look at the second term of the $W^p_{s,\delta,\gamma}(\R^n\times \m T^m)$ norm of $uv$: 
		$$\sum_{0\leq |\beta|<s}\int_{\R^n\times \m T^m}|\partial^\beta(uv-\overline{uv})|^p\langle x\rangle ^{p\gamma}\,dxd\theta.$$ Knowing that $uv=[\bar{u}-(\bar{u}-u)][\bar{v}-(\bar{v}-v)]$ and that $\overline{uv}=\bar{u}\bar{v}+\overline{(u-\bar{u})(v-\bar{v})}$, it suffices to control 
		\begin{equation*}
			\int_{\R^n\times \m T^m}\vert\partial_{x,\theta}^\beta \left( \bar{u}(v-\bar{v})+\bar{v}(u-\bar{u})-(u-\bar{u})(v-\bar{v}) +\overline{(u-\bar{u})(v-\bar{v})} \right) \vert^p\langle x\rangle^{p\gamma}\,dx d\theta.
		\end{equation*}
		Using Hölder's inequality, we estimate the term
		\begin{align*}\label{autre}
			&\sum_{0\leq |\beta|\leq s}\int_{\R^n\times \m T^m} | \partial_{x,\theta}^\beta \overline{(u-\bar{u})(v-\bar{v})}|^p\langle x\rangle^{p\gamma}\,dxd\theta\\
			&\quad\quad={vol(\m T^m)}\sum_{0\leq|\beta|\leq s}\int_{\R^n} | \partial_{x}^\beta \overline{(u-\bar{u})(v-\bar{v})}|^p\langle x\rangle^{p\gamma}\,dx\\
			&\quad\quad\lesssim\sum_{0\leq |\xi|\leq |\beta|\leq s}\int_{\R^n\times \m T^m}\vert \partial^\xi_{x} (u-\bar{u})\partial_{x}^{\beta-\xi}(v-\bar{v}) \vert^p \langle x \rangle^{p\gamma}\,dx d\theta \\
			&\quad\quad\lesssim\sum_{0\leq |\xi|\leq |\beta|\leq s} \| \partial_{x}^\xi(u-\bar{u})\langle x\rangle^\theta \|^p_{L^{pa}(\R^n\times \m T^m)}  \|\partial_{x}^{\beta-\xi}(v-\bar{v})\langle x\rangle^\lambda \|^p_{L^{pb}(\R^n\times \m T^m)}
		\end{align*}
		for 
		$$	\frac{1}{a}+\frac{1}{b}=1\quad\text{ and }\gamma\leq \theta+\lambda.$$ 
		We control both factors by $\tilde{W}^p_{s_1,\gamma_1}$ and $\tilde{W}^p_{s_2, \gamma_2}$ norms, respectively, given that 
		\begin{equation}\label{cond4}\gamma\leq \gamma_1+\gamma_2
		\end{equation}
		and	\eqref{cond3} hold. Both 
		$$\sum_{0\leq |\beta|\leq s}\int_{\R^n\times \m T^m} | \partial_{x,\theta}^\beta \bar{u}(v-\bar{v})|^p\langle x\rangle^{p\gamma}\quad\text{ and }\quad{\sum_{0\leq |\beta|\leq s}\int_{\R^n\times \m T^m} | \partial_{x,\theta}^\beta (u-\bar{u})\bar{v}|^p\langle x\rangle^{p\gamma}}$$ 
		are similarly controlled. Thus, for
		\begin{equation}\label{condint13} \gamma\leq \theta+\lambda,
		\end{equation}
		we have
		\begin{equation}\label{prodest}
		\begin{split}
			&\sum_{0\leq |\beta|\leq s}\int_{\R^n\times \m T^m} | \partial_{x,\theta}^\beta \bar{u}(v-\bar{v})|^p\langle x\rangle^{p\gamma}\\
			\lesssim&\sum_{0\leq |\xi|\leq |\beta|\leq s} \| \partial_{x}^\xi\bar{u}\langle x\rangle^\theta \|^p_{L^{pa}(\R^n)}  \|\partial_{x,\theta}^{\beta-\xi}(v-\bar{v})\langle x\rangle^\lambda \|^p_{L^{pb}(\R^n\times \m T^m)},
			\end{split}
		\end{equation}
		\begin{align*}
			&\sum_{0\leq |\beta|\leq s}\int_{\R^n\times \m T^m} | \partial_{x,\theta}^\beta \bar{v}(u-\bar{u})|^p\langle x\rangle^{p\gamma}\\
			\lesssim&\sum_{0\leq |\xi|\leq |\beta|\leq s} \| \partial_{x}^\xi\bar{v}\langle x\rangle^\theta \|^p_{L^{pa}(\R^n)}  \|\partial_{x,\theta}^{\beta-\xi}(u-\bar{u})\langle x\rangle^\lambda \|^p_{L^{pb}(\R^n\times \m T^m)},
		\end{align*}
		as well as
		\begin{align*}
		&	\sum_{0\leq |\beta|\leq s}\int_{\R^n\times \m T^m} | \partial_{x,\theta}^\beta (u-\bar{u})(v-\bar{v})|^p\langle x\rangle^{p\gamma}\\
		\lesssim &\sum_{0\leq |\xi|\leq |\beta|\leq s} \| \partial_{x,\theta}^\xi(u-\bar{u})\langle x\rangle^\theta \|^p_{L^{pa}(\R^n\times\m T^m)}  \|\partial_{x,\theta}^{\beta-\xi}(v-\bar{v})\langle x\rangle^\lambda \|^p_{L^{pb}(\R^n\times \m T^m)}.
		\end{align*}		
		We are left with estimating the right-hand side of the above inequalities. We detail here the proof for \eqref{prodest}; the two others follow in the same way. We use different Sobolev embeddings for $s_1-|\xi|>\frac{n}{p}$ and $s_1-|\xi|\leq \frac{n}{p}$. In the first case, we fix $a>1$ such that 
		\begin{equation}\label{condint14}\frac{n}{p}-(s_1-|\xi|)= \frac{n}{pa}
		\end{equation} 
		and get
		\begin{align*}
			\|\partial_x^\xi \bar{u}\langle x\rangle^\theta \|_{L^{pa}(\R^n)} & \leq  \|\partial_x^\xi \bar{u}\|_{W^p_{s_1-|\xi|,\theta}(\m R^n)}\\& \lesssim \left(\sum_{|t|\leq s_1-|\xi|}\int_{\R^n}|\partial_x^{\xi+t}\bar{u}|^p\langle x\rangle^{\theta-\xi+\xi+t}\,dx\right)^{\frac{1}{p}} \\
			& \lesssim  \|\bar{u}\|_{W^p_{s_1,\delta_1}(\R^n)}
		\end{align*}
		if 
		\begin{equation}\label{condint11}\theta\leq \delta_1+|\xi|.
		\end{equation}
	 Similarly, it also holds that
		\begin{equation}
			\|\partial_{x,\theta}^{\beta-\xi} (v-\bar{v})\langle x\rangle^\lambda\|_{L^{pb}(\R^n\times \m T^m)}\leq \|(v-\bar{v}) \|_{\tilde{W}^p_{s_2,\gamma_2}(\R^n\times \m T^m)}
		\end{equation}
		if 
		\begin{equation}\label{condint15}\frac{n+m}{p}-s_2+\beta-\xi=\frac{n+1}{pb},
		\end{equation} 
		\begin{equation}\label{condint16}\lambda\leq \gamma_2.
		\end{equation}
		Condition \eqref{condint11} becomes
		\begin{equation}\label{condint12}
			\theta \leq s_1+\delta_1,
		\end{equation}
		so \eqref{condint13} and \eqref{condint16} imply
		\begin{equation}\label{cond5}
			\gamma \leq s_1+ \delta_1+ \gamma_2.
		\end{equation}
		Moreover, \eqref{condint14} and \eqref{condint15} are covered by \eqref{cond3}.
		Let us look at the case $s_1-|\xi| >\frac{n}{p}$. We have the $L^\infty$ estimate
		$$|\partial_x^\xi \bar{u}|\leq \langle x\rangle ^{-\delta_1-\frac{n}{p}-|\xi|}\|\bar{u}\|_{W^p_{s_1,\delta_1}}.$$
		Consequently, it holds that
		$$\int_{\R^n\times \m T^m} | \partial_{x,\theta}^\beta \bar{u}(v-\bar{v})|^p\langle x\rangle^{p\gamma}\,dxd\theta
		\lesssim  \|\bar{u}\|_{W^p_{s_1,\delta_1}}\|v-\bar{v}\|_{\wht W^p_{s_2,\gamma_2}}$$
		if $\gamma-\delta_1-|\xi|-\frac{n}{p}\leq \gamma_2$. Since this must be true for all $0\leq |\xi|\leq s_1-\frac{n}{p}$ we obtain the condition
		\begin{equation}\label{cond6}\gamma\leq \gamma_2 +\delta_1+ \frac{n}{p}.
		\end{equation}
		Conditions \eqref{cond1}, \eqref{cond2}, \eqref{cond3}, \eqref{cond4} , \eqref{cond5}, \eqref{cond6} (and the symmetric cases) can be summarized in
		\begin{gather*}
			s\leq s_1, s_2,\quad s\leq s_1+s_2-\frac{n+m}{p}, \quad \delta\leq \delta_1+\delta_2+\frac{n}{p}, \quad \delta+s \leq \gamma_1+\gamma_2,\\ \quad \gamma \leq \gamma_1+\gamma_2, \quad \gamma \leq min(s_i,\frac{n}{p})+ \delta_i+ \gamma_j,\,\quad i \neq j. \nonumber
		\end{gather*}
		\bigskip
		\par \textit{The Sobolev and Hölder embeddings \eqref{comp}.} This proof is a straightforward adaptation of the argument in the paper of Choquet-Bruhat Christodoulou \cite{cb81}. We consider a sequence $\{f_n\}_{n\in\mathbb{N}}$ in the unit ball of $W^p_{s,\delta,\gamma}(\R^n\times \m T^m)$. Let $B_R$ be the open ball $B_R=\{x\in \R^n, |x|<R\}$ and let $\chi_R$ be a smooth cutoff function such that $\chi_R\equiv 1$ on $B_R\times \m T^m$ and $\chi_R\equiv 0$ on $^C(B_{2R}\times \m T^m)$. Up to a subsequence, $f_n$ converges weakly to $f\in W^p_{s,\delta,\gamma}$.  In order to obtain strong $W^p_{s',\delta',\gamma'}$ convergence, we rewrite $f_n=\chi_R f_n+(1-\chi_R) f_n$. First, we look at what happens in the interior of the manifold. The sequence $\{\chi_R f_n\}$ is bounded in the classical Sobolev space $W^{s,p}(B_{2R}\times \m T^m)$,
		\begin{equation}
			\|\chi_R f_n\|_{W^p_s(B_{2R}\times \m T^m)}\leq C_R\|f_n \|_{W^p_{s,\delta,\gamma}(B_{2R}\times \m T^m)}. 
		\end{equation}
		By Rellich's compactness theorem, there exists a subsequence  of $\{\chi_R f_n\}$which  converges strongly in $W^{s',p}$ to $f$ in $B_R\times \m T^m$. In order to finish to proof, it remains to study the behaviour of $\{f_n\}_{n\in\mathbb{N}}$ at infinity. We see that
		
		\begin{align*}
			\|f-f_n\|_{W^p_{s',\delta',\gamma'}(\R^n\times \m T^m)} & \leq 
			\left\{ \sum_{\beta=0}^{s'} \int_{B_R\times \m T^m} \langle x\rangle^{p(\delta'+m)}|\partial_{x,\theta}^\beta(f-f_n)|^p \,dxd\theta\right\}^{1/p}\\
			&+\{ \int_{B_R\times \m T^m} \langle x\rangle^{p\gamma}|\partial_{x,\theta}^\beta (f-f_n -\overline{f-f_n})|^p\,dxd\theta \}^{1/p}.\\
			&+R^{\delta'-\delta}\left\{ \sum_{\beta=0}^{s'}\int_{^C B_R} \langle x\rangle^{p(\delta+m)} |\partial_{x}^k (\overline{f-f_n})|^p\,dxd\theta \right\}^{1/p}\\
			&+R^{\gamma'-\gamma}\{ \int_{^C(B_R\times \m T^m)} \langle x\rangle^{p\gamma}|\partial_{x,\theta}^\beta (f-f_n -\overline{f-f_n})|^p\,dxd\theta \}^{1/p}.
		\end{align*}
		Since $\delta'<\delta$ and $\gamma'<\gamma$, we can find $R$ such that $\sup(R^{\delta'-\delta},R^{\gamma'-\gamma})\sup_{n}\|f_n\|_{W^p_{s,\delta,\gamma}} \leq \ep$, and then use the convergence on $B_R$ to conclude.
	\end{proof}
	It is important that the multiplication is ``stable'', in a specific sense, {for the non-linearity in our equations to behave well}.
	\begin{cor}\label{mult cor 1} The following restrictions ensure that the multiplications we need are stable. If
		\begin{equation}\label{stable hyp gen}
			\gamma \geq 0,\quad 2> \frac{n+m}{p},\quad \delta\geq -\frac{n}{p}\quad \text{ and }\quad 2\gamma \geq \delta +2,
		\end{equation}
		then we have the desired properties
		\par  $W^p_{2,\delta,\gamma}\times W^p_{2,\delta,\gamma}\subset W^p_{2,\delta,\gamma}$, 
		\par  $W^p_{1,\delta+1,\gamma}\times W^p_{1,\delta+1,\gamma} \subset W^p_{0,\delta+2,\gamma}$,
		\par  $ W^p_{0,\delta+2,\gamma}\times W^p_{2,\delta,\gamma} \subset W^p_{0,\delta+2,\gamma}$.
	\end{cor}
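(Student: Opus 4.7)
The plan is to apply Proposition \ref{multemb prop} three times, once for each of the stated inclusions, and to verify that under the hypotheses (\ref{stable hyp gen}) the six weight constraints (\ref{hyp mult}) are all satisfied. The substitutions are mechanical. For (i), take $(s_1,s_2,s)=(2,2,2)$, $(\delta_1,\delta_2)=(\delta,\delta)$ with target weight $\delta$, and $\gamma_i=\gamma$. For (ii), $(s_1,s_2,s)=(1,1,0)$, $(\delta_1,\delta_2)=(\delta+1,\delta+1)$ with target weight $\delta+2$, and $\gamma_i=\gamma$. For (iii), $(s_1,s_2,s)=(0,2,0)$, $(\delta_1,\delta_2)=(\delta+2,\delta)$ with target weight $\delta+2$, and again $\gamma_i=\gamma$.

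In each of these three cases a direct substitution shows that the first four conditions of (\ref{hyp mult}) collapse to $\frac{n+m}{p}\leq 2$, $\delta\geq -\frac{n}{p}$, and $\delta+2\leq 2\gamma$, while $\gamma\leq \gamma_1+\gamma_2$ collapses to $\gamma\geq 0$. These are precisely the four hypotheses listed in (\ref{stable hyp gen}), so nothing further is required for this block of constraints.

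The one condition worth pausing on is the last family, $\gamma\leq \min(s_i,\tfrac{n}{p})+\delta_i+\gamma_j$ for $i\neq j$. Here the observation is that $\frac{n}{p}\leq\frac{n+m}{p}<2$, so whenever $s_i\geq 1$ one has $\min(s_i,\tfrac{n}{p})=\tfrac{n}{p}$ and the constraint reduces to $\delta_i+\tfrac{n}{p}\geq 0$; this is immediate because $\delta_i\geq \delta\geq -\tfrac{n}{p}$ in every substitution above. The only remaining subcase is (iii) with $i=1$, where $s_1=0$ and so $\min(s_i,\tfrac{n}{p})=0$, requiring $\delta_1=\delta+2\geq 0$; this follows from $\delta\geq -\tfrac{n}{p}>-2$. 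I do not anticipate any real obstacle here: the whole corollary amounts to confirming that the four hypotheses (\ref{stable hyp gen}) are exactly what is needed to close the three products in question, once one exploits $\frac{n}{p}<2$ to handle the $\min$ terms.
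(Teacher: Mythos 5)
Your approach — substituting into the six constraints \eqref{hyp mult} of Proposition \ref{multemb prop} — is the right one, and indeed the paper states Corollary \ref{mult cor 1} without proof, treating it as exactly this kind of mechanical verification. The first four conditions and the condition $\gamma\leq \gamma_1+\gamma_2$ collapse to the hypotheses \eqref{stable hyp gen} just as you say. However, there is a slip in your handling of the last family of constraints: you assert that whenever $s_i\geq 1$ one has $\min(s_i,\tfrac{n}{p})=\tfrac{n}{p}$, justified by $\tfrac{n}{p}<2$. This is correct when $s_i=2$, but when $s_i=1$ (the case relevant to the second inclusion $W^p_{1,\delta+1,\gamma}\times W^p_{1,\delta+1,\gamma}\subset W^p_{0,\delta+2,\gamma}$) it fails if $1<\tfrac{n}{p}<2$, a range that \eqref{stable hyp gen} does \emph{not} exclude. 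In that case $\min(1,\tfrac{n}{p})=1$, and the constraint is $1+\delta_i\geq 0$, which is \emph{stronger} than the $\tfrac{n}{p}+\delta_i\geq 0$ you check (since $\min(s_i,\tfrac{n}{p})\leq \tfrac{n}{p}$, bounding only the larger quantity does not suffice).

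Fortunately the conclusion still holds: here $\delta_i=\delta+1$, so the constraint $1+\delta_i\geq 0$ reads $\delta+2\geq 0$, which follows from $\delta\geq -\tfrac{n}{p}>-2$. So the corollary is true, but the step needs the two cases of the $\min$ to be treated separately (or simply the observation that in every substitution $s_i+\delta_i\geq 0$ and $\tfrac{n}{p}+\delta_i\geq 0$ both hold, so their minimum does too). As a side remark, $\tfrac{n}{p}<1$ does hold under the hypotheses of Theorem \ref{thexvac} because there $p>d=n+m$, but the corollary as stated only assumes $\tfrac{n+m}{p}<2$, so your verification should not implicitly rely on $\tfrac{n}{p}\leq 1$.
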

	In the rest of the paper, we always assume the conditions of $\eqref{stable hyp gen}$ to hold true. While this does not ensure that the decay we obtain is necessarily optimal, it has the advantage of considerably simplifying calculations and notations. 
	
	\section{Elliptic theory on the  product manifold}\label{secflat}
	
	\par In order to construct solutions for the constraint equations, we must first understand the behaviour of the Laplace-Beltrami operator $\Delta_g$ and of the conformal Laplacian $\overrightarrow{\Delta}_g$ on the weighted Sobolev spaces we defined in $M=\m R^n \times \m T^m$. We begin by looking at a wider class of elliptic operators.
	\begin{df}
		{	Let $P$ be a homogeneous self-adjoint second order elliptic operator with constant coefficients, acting on scalar functions or on vector fields of $M=\m R^n \times \m T^n$. We write $$P (u)=\sum_{|\alpha|=2}B^{(\alpha)}\nabla^\alpha u,$$
			where the $B^{(\alpha)}$ are either scalars or matrices.
			For a vector operator, the ellipticity condition means that for all $\xi \in \m R^{n+m}$ with $\xi \neq 0$, $v\mapsto\sum_{|\alpha|=2}\xi^\alpha B^{(\alpha)}(v)$ is an ismorphism on $\m R^{n+m}$. Let $s\geq 2$. We say that a second order elliptic operator $L$ is asymptotic to $P$ in $W^p_{s,\sigma,\lambda}$ if 
			$$L(u)= \sum_{|\alpha|\leq 2}a^{(\alpha)}\nabla^\alpha u,$$
			with
			\begin{itemize}
				\item for $|\alpha|=l$, $a^{(\alpha)}-B^{(\alpha)} \in W^p_{s,\sigma,\lambda}$ ,
				\item for $ |\alpha|<l$, 
				$a^{(\alpha)} \in W^p_{s+|l|-2,\sigma+|l|,\lambda }$.
			\end{itemize}
			In the same way, if $P_0$ is an operator in $\m R^n$, we define the notion of being asymptotic to $P_0$ in $W^p_{s,\delta}$.}
	\end{df}
	These operators verify a series of properties described below. 
	\begin{thm}{\label{thlin}Let $P$ be a second order homogeneous elliptic self-adjoint operator, acting on functions or vector fields of $\m R^n\times \m T^m$ with constant coefficients, and $L$ be a second order elliptic operator asymptotic to $P$ in $W^p_{2,\sigma,\lambda}$ with $ \frac{n+m}{p}<2$, $-\frac{n}{p}<\sigma$ and $\lambda>0$. Let $\delta,\gamma$ be such that
			$$\delta-\lambda+2<\gamma <\delta+\lambda+\frac{n}{p}.$$
			\begin{itemize}
				\item $L:W^p_{2,\delta,\gamma}\to W^p_{0,\delta+2,\gamma}$  is a continuous map,
				\item If $-\delta-\frac{n}{p} \notin \m N$ then  $L : W^p_{2,\delta,\gamma}\to W^p_{0,\delta+2,\gamma}$ has finite dimensional kernel and closed range, and there exist $C$ and $R>0$ such that for all $u \in  W^p_{2,\delta,\gamma}$,
				\begin{equation}\label{estimp}
					\|u\|_{{W}^p_{2,\delta,\gamma}}\leq C\left(\|Lu\|_{{W}^p_{0,\delta+2,\gamma}}+\|\wb u\|_{L^p(B_R)}\right). 
				\end{equation}
		\end{itemize}}
	\end{thm}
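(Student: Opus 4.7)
\par The plan is to prove continuity directly from the multiplication property (Proposition \ref{multemb prop}), then establish the a priori estimate \eqref{estimp} first for the constant-coefficient model operator $P$ via a Fourier decomposition on $\m T^m$, and finally upgrade to $L$ by a perturbation-plus-interior argument before deducing the Fredholm-type conclusions by a standard compactness trick.

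\par \textbf{Step 1 (Continuity).} For each term in $Lu=\sum_{|\alpha|\leq 2}a^{(\alpha)}\nabla^\alpha u$, I would apply the multiplication property \eqref{mult prop} to the product $a^{(\alpha)}\nabla^\alpha u$ (resp.\ $(a^{(\alpha)}-B^{(\alpha)})\nabla^\alpha u$ when $|\alpha|=2$). The hypotheses $\frac{n+m}{p}<2$, $-\frac{n}{p}<\sigma$, $\lambda>0$ together with the coupling $\delta-\lambda+2<\gamma<\delta+\lambda+\frac{n}{p}$ are precisely what is needed to verify \eqref{hyp mult} in each case; the check is mechanical and yields $\|Lu\|_{W^p_{0,\delta+2,\gamma}}\lesssim\|u\|_{W^p_{2,\delta,\gamma}}$.

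\par \textbf{Step 2 (Model estimate).} Decompose $u=\bar u+v$ with $v=u-\bar u$ of zero mean on $\m T^m$. Because $P$ has constant coefficients and commutes with $\theta$-averaging, $\overline{Pu}=P_0\bar u$, where $P_0$ retains only those $B^{(\alpha)}\partial_x^\alpha$ with $\alpha_\theta=0$, and $Pv$ has zero mean, so the problem decouples. For $\bar u\in W^p_{2,\delta}(\m R^n)$, $P_0$ is a constant-coefficient elliptic operator on $\m R^n$ and the classical weighted Sobolev theory of Nirenberg--Walker, McOwen and Bartnik yields
\[
\|\bar u\|_{W^p_{2,\delta}(\m R^n)}\lesssim \|P_0\bar u\|_{W^p_{0,\delta+2}(\m R^n)}+\|\bar u\|_{L^p(B_R)}
\]
precisely when $-\delta-\frac{n}{p}\notin\m N$. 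For the non-zero mode part I expand $v=\sum_{k\ne 0}v_k(x)e^{ik\cdot\theta}$; each Fourier mode satisfies an elliptic equation on $\m R^n$ with a positive mass-like term of size $|k|^2$ coming from the $\partial_\theta$-derivatives. Each mode operator is therefore an isomorphism on standard weighted spaces, with norms improving as $|k|\to\infty$; reassembling via an $L^p$-multiplier argument on $\m T^m$ gives an isomorphism estimate $\|v\|_{\tilde W^p_{2,\gamma}}\lesssim\|Pv\|_{\tilde W^p_{0,\gamma}}$ for all $\gamma$, with \emph{no} exceptional weight restriction. Adding the two contributions produces \eqref{estimp} for $P$.

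\par \textbf{Step 3 (Perturbation and Fredholm properties).} Writing $L=P+(L-P)$ and fixing a large radius $R_0$, the asymptotic hypothesis makes the coefficients of $L-P$ small in the norms used in Step 1 on $\{|x|>R_0\}$, so the multiplication bound there has small operator norm and the estimate from Step 2 absorbs the perturbation at infinity. The remaining contribution on the compact set $B_{R_0}\times \m T^m$ is handled by classical interior elliptic regularity, producing \eqref{estimp} for $L$. Finite dimensionality of $\ker L$ and closedness of the range then follow from the standard argument combining \eqref{estimp} with the compact embedding $W^p_{2,\delta,\gamma}\subset\subset W^p_{2,\delta',\gamma'}$ of Proposition \ref{multemb prop}(1): a bounded sequence in $\ker L$ has a subsequence converging in $L^p(B_R)$ by compactness, and \eqref{estimp} upgrades this to convergence in the full norm; closed range is obtained by the same device applied to a Cauchy sequence of images.

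\par \textbf{Main obstacle.} The delicate point is the uniform $L^p$ estimate for the sum over non-zero Fourier modes in Step 2: $L^2$ Parseval is unavailable, so one must invoke an $L^p$-multiplier theorem on $\m T^m$ that interacts well with the weighted spaces on $\m R^n$. It is also here that one sees the structural dichotomy between the two weights: the exceptional values $-\delta-n/p\in\m N$ obstruct invertibility only for the zero mode, since the non-zero mode operators always carry a strictly positive mass and so never lie on an indicial root.
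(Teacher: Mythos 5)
Your high-level skeleton matches the paper's: decompose $u=\bar u+(u-\bar u)$, handle the zero mode by the classical asymptotically Euclidean theory on $\R^n$ (Theorem \ref{thcb}), handle the non-zero modes by an isomorphism estimate valid for all weights, then pass from $P$ to $L$ by a perturbation at infinity combined with interior elliptic regularity and interpolation, and extract the Fredholm conclusions from the a priori estimate plus compactness. Steps 1 and 3 are essentially the paper's arguments (Sections \ref{secP} and \ref{secthlin}).

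The genuine gap is exactly where you flag the ``main obstacle''. Your plan for the non-zero modes is to invert mode-by-mode (each mode operator on $\R^n$ carrying a positive mass $\sim|k|^2$) and then to ``reassemble via an $L^p$-multiplier argument on $\m T^m$''. That reassembly step is where the entire difficulty of Proposition \ref{lms1} lives, and your sketch does not carry it out. The structural problem is that the $L^p(\R^n\times\m T^m)$ norm of $v=\sum_{k\ne 0}v_k(x)e^{ik\cdot\theta}$ does not decompose as any simple sum of weighted $L^p(\R^n)$ norms of the modes $v_k$, so knowing that each mode operator is invertible with improving norms does not directly yield an $L^p$ bound for the full operator; one needs a weighted, vector-valued multiplier/CZ theorem adapted to the mixed $\R^n\times\m T^m$ geometry, and that is nontrivial. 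The paper avoids mode-by-mode reassembly entirely: it first proves an $L^2$ weighted estimate (Lemma \ref{lm2}, by Plancherel plus a duality trick using self-adjointness of $P$ for negative weights), then constructs the convolution kernel $h$ for the operator $\nabla^2(u-\bar u)=h*(f-\bar f)$ summed over all $k\ne 0$, proves pointwise size and regularity estimates for $h$ (Lemma \ref{lmh}, via a dyadic Littlewood--Paley decomposition of the $\xi$-frequency and a telescoping device in the $\theta$-directions), and finally invokes classical Calder\'on--Zygmund singular-integral theory on the weighted kernel $K((x,\theta),(y,\chi))=\frac{\langle x\rangle^\gamma}{\langle y\rangle^\gamma}h((x,\theta)-(y,\chi))$ to upgrade from $L^2$ to $L^p$. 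In short: your outline is a valid alternative strategy at the level of bookkeeping, but the one step you defer is precisely the technical heart of the theorem, and the approach you gesture at (mode-by-mode + $L^p$ multiplier) would need substantial additional machinery to be made rigorous, whereas the paper proves directly that the collective non-zero-mode operator has a CZ kernel. Your observation that the indicial obstruction $-\delta-\frac{n}{p}\in\m N$ arises only from the zero mode is correct and agrees with the paper.
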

	In the particular case of $\Delta_g$ or $\overrightarrow{\Delta}_g$, we can prove that given sufficient decay, the operators are isomorphisms.
	\begin{thm}\label{thinv}{
			Let $g\in W^p_{2,\sigma,\lambda}$ with  $ \frac{n+m}{p}<2$, $-\frac{n}{p}<\sigma$ and $\lambda>0$. Let $\delta,\gamma$ be such that
			$$\delta-\lambda+2<\gamma<\delta+\lambda+\frac{n}{p},$$
			and also
			$$0<\gamma, \quad -\frac{n}{p} <\delta<-\frac{n}{p}+n-2, \quad 2-\frac{n+m}{p}<\lambda.$$
			Let $h\in W^p_{0,\delta+2,\gamma}$ be a non negative bounded function.
			Then $\Delta_g+h$ and $\overrightarrow{\Delta_g}$ are isomorphisms $W^p_{2,\delta,\gamma}(\m R^n\times \m T^m)\to W^p_{0,\delta+2,\gamma}(\m R^n\times \m T^m).$} Moreover, we have
		$$\|u\|_{W^p_{2,\delta,\gamma}}\leq C \|L u \|_{W^p_{0,\delta+2,\gamma}},$$
		with $L$ being either $\Delta_g+h$ or $\overrightarrow{\Delta_g}$.
	\end{thm}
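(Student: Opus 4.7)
The plan is to upgrade the Fredholm statement of Theorem \ref{thlin} to an isomorphism by proving that both operators $\Delta_g+h$ and $\overrightarrow{\Delta_g}$ have trivial kernel at every admissible weight, and then invoking the formal $L^2$ self-adjointness of both operators to conclude surjectivity by Fredholm duality. The coercive estimate will then follow from \eqref{estimp} by a compactness/contradiction argument using the compact embedding \eqref{comp}.

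For $\Delta_g+h$, I would argue by the strong maximum principle. By weighted elliptic regularity applied to $(\Delta_g+h)u=0$ with coefficients asymptotic to constants in $W^p_{2,\sigma,\lambda}$ and $u\in W^p_{2,\delta,\gamma}$, the function $u$ is $C^2$, and by the Sobolev--H\"older embeddings in Proposition \ref{multemb prop} it tends to zero at infinity on $\R^n\times\mathbb{T}^m$. Since $h\ge 0$ is bounded and $-(\Delta_g+h)$ has nonpositive zero-order part in the classical analyst's sign convention, the Hopf strong maximum principle forbids $u$ from attaining a positive interior maximum or a negative interior minimum without being constant; combined with the decay at infinity, this yields $u\equiv 0$.

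The delicate point, which I expect to be the main obstacle, is injectivity of $\overrightarrow{\Delta_g}$. Given $W\in W^p_{2,\delta,\gamma}$ with $\overrightarrow{\Delta_g}W=0$, one first uses elliptic regularity and an asymptotic expansion at infinity to upgrade the decay of $W$ and $\nabla W$ enough to justify the integration by parts
\[
0=\int_M W^\nu(\overrightarrow{\Delta_g}W)_\nu\,dv_g=\tfrac12\int_M |\mathcal{L}_g W|_g^2\,dv_g,
\]
which forces $\mathcal{L}_g W\equiv 0$, i.e.\ $W$ is a conformal Killing field of $g$ vanishing at infinity. To rule this out I would use $g-\zeta\in W^p_{2,\sigma,\lambda}$ to pass to the flat model: $W$ then asymptotically satisfies $\mathcal{L}_\zeta W=0$ modulo decaying errors. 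Expanding $W$ in a Fourier series along the torus factor, the zero mode $\overline W(x)$ is an asymptotically decaying conformal Killing field of Euclidean $\R^n$; the classification (translations, rotations, dilations, special conformal transformations) shows that none tend to zero at infinity, forcing $\overline W\equiv 0$. For each nonzero torus frequency $k$, the Poincar\'e inequality on the fibre $\mathbb{T}^m$ provides a mass term $|k|^2$ in the energy identity, which combined with the weighted $L^2$ control of $W-\overline W$ rules out nontrivial modes. This Fourier step is precisely where the product structure with $\mathbb{T}^m$ is used, in line with the remark following Theorem \ref{thexvac}.

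With injectivity in hand, surjectivity follows from Fredholm duality applied to the formally self-adjoint operator $L$: the cokernel of $L:W^p_{2,\delta,\gamma}\to W^p_{0,\delta+2,\gamma}$ is identified via the $L^2$ pairing with $\ker L$ on a dual weighted space, which also lies within the admissible range of Theorem \ref{thlin} and of the injectivity arguments above, and is therefore trivial. Hence $L$ is bijective. Finally, the coercive estimate $\|u\|_{W^p_{2,\delta,\gamma}}\le C\|Lu\|_{W^p_{0,\delta+2,\gamma}}$ is obtained by contradiction from \eqref{estimp}: a normalized sequence $\{u_n\}$ with $\|Lu_n\|\to 0$ would, via the compact embedding \eqref{comp}, subconverge strongly in a weaker weighted space to some $u$ with $Lu=0$ and $\|\bar u_n\|_{L^p(B_R)}\to\|\bar u\|_{L^p(B_R)}>0$, producing a nontrivial element of $\ker L$ and contradicting injectivity.
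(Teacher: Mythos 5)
Your blueprint agrees with the paper on three of its four constituent steps: injectivity of $\Delta_g+h$ via the maximum principle, reduction of $\ker\overrightarrow{\Delta_g}$ to conformal Killing fields by integration by parts, and upgrading the semi-Fredholm estimate \eqref{estimp} to the coercive bound using injectivity and compactness. However, the surjectivity step as you sketch it does not go through, and the conformal-Killing-field step is argued by a mechanism that does not apply in the asymptotically flat setting.

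On surjectivity: you propose to apply Fredholm duality directly to $L=\Delta_g+h$ (or $\overrightarrow{\Delta_g}$), claiming that $\ker L^*$, which lives in (roughly) $W^{p'}_{2,-\delta-2,-\gamma}$, ``lies within the admissible range of \ldots the injectivity arguments above.'' This is where the argument breaks. The injectivity arguments---the maximum principle of Lemma \ref{positivity cor} and the conformal Killing field vanishing Theorem \ref{ckf vanish}---require $\gamma>0$ and $\delta>-\tfrac np$ so that solutions tend to zero at infinity. At the dual weight the torus parameter becomes $-\gamma<0$, so elements of $\ker L^*$ need not decay, and neither the maximum principle nor the CKV classification applies. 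What the paper does instead is prove surjectivity of the flat operator $P$ by the duality argument (which works because Proposition \ref{lms1} controls $u-\bar u$ at \emph{every} weight, and $\bar u$ is handled by the asymptotically Euclidean theory which only needs the condition on $\delta$), and then transports surjectivity from $P$ to $L$ by a homotopy argument (Theorem \ref{thgt1}, Gilbarg--Trudinger): along $g_t=(1-t)\zeta+tg$ the operators $S_t$ remain injective at the original weight, the uniform estimate $\|u\|\lesssim\|S_t u\|$ holds, and $S_0$ is onto. The homotopy step is exactly what avoids having to prove injectivity at the bad dual weight, and it is missing from your outline.

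On the conformal Killing field step: your Fourier decomposition along $\m T^m$, with a Poincar\'e-type mass term on each nonzero mode, is adequate to kill CKVs of the \emph{flat} metric $\zeta$ that decay at infinity. But for $g\neq\zeta$ the operator $\mathcal L_g$ does not commute with the mode projection, so the modes couple and the argument ``asymptotically satisfies $\mathcal L_\zeta W=0$ modulo decaying errors'' is not yet a proof. The paper instead uses a rescaling argument at infinity (Lemma \ref{lminjext}, with $g(2^m\cdot)\to\zeta$ in $W^p_{2,\sigma,\lambda}$) to show $X$ vanishes near infinity, followed by a blow-up/unique-continuation argument at an interior sphere (Theorem \ref{ckf vanish}, following Maxwell), which handles the non-flat metric directly. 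Your sketch would need to be replaced or substantially elaborated to reach the same conclusion for general $g\in W^p_{2,\sigma,\lambda}$.
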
	
	{Theorems \ref{thlin} and \ref{thinv} are the analogue of the following classical theorem in asymptotically
		Euclidean manifolds, which we cite here for reference in a condensed form, (see Appendix 2 of \cite{livrecb} and Theorem 1.10 in \cite{Bar86}).}
	\begin{thm} \label{thcb}{Let $P_0$ be a second order homogeneous elliptic operator acting on functions or vector fields of $\m R^n$ with constant coefficients and $L_0$ be a second order elliptic operator asymptotic to $P_0$ in $W^p_{2,\sigma}$ with $ \frac{n}{p}<2$ and $-\frac{n}{p}<\sigma$.
			\begin{itemize}
				\item For any $\delta$, we have $L_0 : W^p_{2,\delta}\to W^p_{0,\delta+2}$.
				\item If $-\delta-\frac{n}{p} \notin \m N$, then $L_0 : W^p_{2,\delta}\to W^p_{0,\delta+2}$ has finite dimensional kernel and closed range.
				\item There exist $C$ and $R>0$ such that for all $u \in  W^p_{2,\delta}$,
				\begin{equation}
					\|u\|_{{W}^p_{2,\delta}}\leq C\left(\|L_0u\|_{{W}^p_{0,\delta+2}}+\|u\|_{L^p(B_R)}\right). 
				\end{equation}
			\end{itemize}
			Moreover, if $g\in W^p_{2,\sigma}$  and $-\frac{n}{p}<\delta<-\frac{n}{p}+n-2$, then $\Delta_g$ and $\overrightarrow{\Delta_g}$ are isomorphisms acting from $W^p_{2,\delta}$ into $W^p_{0,\delta+2}$.}
	\end{thm}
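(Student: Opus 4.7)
The plan is to follow the classical scheme of Choquet-Bruhat--Christodoulou and Bartnik: first treat the constant-coefficient operator $P_0$ on $\m R^n$ by scaling arguments, then deduce the corresponding estimate for the variable-coefficient operator $L_0$ by absorbing the lower-order and perturbative terms at infinity. The continuity statement is immediate: the principal part $P_0$ has constant coefficients and only shifts the weight by $2$, while the differences $a^{(\alpha)}-B^{(\alpha)}$ and the lower-order coefficients $a^{(\alpha)}$ for $|\alpha|<2$ lie in weighted spaces whose decay indices are tuned precisely so that the standard multiplication property on $W^p_{s,\delta}(\m R^n)$ maps $W^p_{2,\delta}$ into $W^p_{0,\delta+2}$.

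Next, for the semi-Fredholm estimate, I would decompose $\m R^n$ dyadically into annuli $A_k=\{2^{k-1}\le |x|\le 2^{k+1}\}$ and rescale $A_k$ to the unit annulus via $u_k(y)=u(2^k y)$. On this fixed annulus the coefficients of the rescaled operator are close to those of $P_0$ (since $L_0-P_0$ has coefficients that decay pointwise by the Sobolev embedding of $W^p_{2,\sigma}$ with $\sigma>-n/p$), so the interior Calder\'on--Zygmund estimate gives a uniform bound $\|u_k\|_{W^{2,p}}\ls \|(L_0)_k u_k\|_{L^p}+\|u_k\|_{L^p}$ with a constant independent of $k$. Rescaling back inserts the correct $\rr^{\delta+|\beta|}$ weights, and summing yields
\[
\|u\|_{W^p_{2,\delta}(\m R^n\setminus B_1)} \ls \|L_0 u\|_{W^p_{0,\delta+2}} + \|u\|_{W^p_{0,\delta}(\m R^n\setminus B_1)}.
\]
The zero-order term on the right can then be absorbed outside a large ball by an injectivity argument for $P_0$: extract a sequence saturating the estimate, pass to a weak limit satisfying $P_0 u=0$ on $\m R^n$ with $u\in W^p_{0,\delta}$, and deduce $u\equiv 0$. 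This last deduction is exactly where the hypothesis $-\delta - n/p \notin \m N$ enters, since these are the indicial weights at which $P_0$ admits polynomial solutions. The resulting inequality is the desired semi-Fredholm estimate, and the standard abstract lemma (Rellich compactness applied to the $L^p(B_R)$ remainder) produces finite-dimensional kernel and closed range.

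For the last claim, under $-n/p<\delta<-n/p+n-2$ the kernel of $\Delta_g$ on $W^p_{2,\delta}$ is trivial: any solution decays rapidly enough to justify integration by parts, yielding $\int|\nabla u|^2 = 0$. Triviality of the cokernel follows by a duality argument, since the formal adjoint is of the same type and its effective weight again falls within the admissible range. The vector operator $\overrightarrow{\Delta}_g$ is handled identically, together with the fact that an asymptotically flat $g$ admits no nontrivial decaying conformal Killing field. The main obstacle is the constant-coefficient indicial analysis for $P_0$ on $W^p_{s,\delta}(\m R^n)$---the identification of the exceptional weights and the verification that $P_0$ is invertible on the others---which is delicate but by now completely classical; once it is in hand, the remaining steps are routine perturbation theory and abstract semi-Fredholm arguments.
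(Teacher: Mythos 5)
This theorem is not actually proved in the paper: the text introduces it with ``which we cite here for reference in a condensed form, (see Appendix 2 of \cite{livrecb} and Theorem 1.10 in \cite{Bar86})''. So there is no in-paper proof to compare your sketch against; the relevant object of comparison is the classical argument of Choquet-Bruhat--Christodoulou and Bartnik, which is exactly what you are reconstructing. Your outline matches that argument in structure: interior Calder\'on--Zygmund estimates on rescaled dyadic annuli give scale-invariant bounds, the constant-coefficient indicial analysis of $P_0$ controls the exceptional weights, perturbation handles $L_0-P_0$, and duality plus the maximum principle (and the absence of decaying conformal Killing fields) gives the isomorphism statement on $-\frac{n}{p}<\delta<-\frac{n}{p}+n-2$. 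Your accounting of where the upper bound on $\delta$ comes from (injectivity of the adjoint on the dual weight $-\delta-2>-n/p'$) is also correct.

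One point in your sketch is looser than the argument it is summarizing and should not be presented as a self-contained step. You propose to remove the extra term $\|u\|_{W^p_{0,\delta}(\m R^n\setminus B_1)}$ by ``extracting a sequence saturating the estimate, passing to a weak limit satisfying $P_0 u=0$, and deducing $u\equiv 0$''. As written this does not close: $W^p_{2,\delta}$ is not compact into $W^p_{0,\delta}$ globally, so the weak limit carries no lower bound on its $W^p_{0,\delta}$ norm and you get no contradiction. In Bartnik's treatment the absorption is not a soft compactness argument but a direct consequence of the constant-coefficient isomorphism on the exterior domain (via the Fourier/Riesz kernel representation for $\Delta$, or spherical-harmonic/Mellin expansion in general), which is precisely where the condition $-\delta-\frac{n}{p}\notin\m N$ is used: the exceptional weights are the homogeneity degrees of polynomial solutions of $P_0$, at which the exterior operator is not semi-Fredholm. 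You acknowledge this as the ``main obstacle'' at the end, and rightly so, but the compactness phrasing should be replaced by the exterior isomorphism statement, which then combines with the local interior estimate (and Rellich on $B_R$) to give the scale-broken inequality and the closed-range and finite-kernel conclusions.
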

	
	{To prove Theorems \ref{thlin} and \ref{thinv}, we need to start with $P$ the homogeneous second order elliptic operator with constant coefficients. In Section \ref{secPnonzero} we study the non-zero modes of $P$. The behaviour of the zero mode is described by Theorem \ref{thcb}. In Section \ref{secP}, we show that Theorem \ref{thlin} holds in the particular case of $L=P$. We finish the proof of Theorem \ref{thlin} in Section \ref{secthlin}. This passage from $P$ to $L$ is very similar to the asymptotically Euclidean case. Finally, in Section \ref{injsec}, we study the injectivity of $\Delta_g$ and $\overrightarrow{\Delta_g}$, and conclude the proof of Theorem \ref{thinv}.}

	\subsection{Analysis of $P$ on non zero modes}\label{secPnonzero}
	{Let  $P$ be a second order elliptic homogeneous self-adjoint operator with constant coefficients on $\R^n \times \m T^m$,
		\begin{equation}\label{L_0}
			P=-\sum_{\substack{\alpha\in\m Z^d,\\ |\alpha|=2}}B^{(\alpha)}\nabla^\alpha_{\zeta},
		\end{equation}
		where $B^{(\alpha)}$ are scalars or matrices with constant coefficients. We consider the equation $P u =f$. Since $P$ has constant coefficients, this equation implies
		$$P(u-\bar{u})=f-\bar{f}.$$
		The aim of this section is to prove the following proposition.}
	
	
	\begin{prp}\label{lms1}
		Let $\gamma,\delta \in \m R$, and  $u \in W^p_{2,\delta,\gamma}(\R^n\times \m T^m), f\in W^p_{0,\delta+2,\gamma}(\R^n\times \m T^m)$ such that $P (u-\wb u)=f-\wb f$. Then
		$$\|u-\wb u\|_{\tilde{W}^p_{2,\gamma}}\lesssim \|f-\wb f\|_{\tilde{W}^p_{0,\gamma}}.$$
		In particular, $P (u-\wb u)=0$ implies $u-\wb u=0$.
	\end{prp}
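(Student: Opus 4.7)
The plan is to Fourier-decompose in the compact variable $\theta\in\m T^m$, which splits the equation into a family of elliptic problems on $\R^n$ indexed by $k\in\m Z^m\setminus\{0\}$, and to exploit the fact that on every nonzero mode the operator $P$ acquires an effective ``mass'' of order $|k|^2\geq 1$. Since $u-\wb u$ has zero mean on $\m T^m$ for every fixed $x$, I would write
\begin{equation*}
u(x,\theta)-\wb u(x)=\sum_{k\in\m Z^m\setminus\{0\}}u_k(x)e^{ik\cdot\theta},\qquad f(x,\theta)-\wb f(x)=\sum_{k\neq 0}f_k(x)e^{ik\cdot\theta}.
\end{equation*}
The constant-coefficient hypothesis on $P$ and uniqueness of Fourier series then yield decoupled equations $P_k u_k=f_k$ on $\R^n$, where $P_k$ is obtained from $P$ by substituting $\nab_\theta\mapsto ik$; its symbol at $\xi\in\R^n$ is the restriction $p(\xi,k)$ of the full symbol $p$ of $P$.

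Homogeneous ellipticity and self-adjointness of $P$ force $|p(\xi,k)|\gtrsim |\xi|^2+|k|^2$ (in operator norm for vector $P$), so for $k\neq 0$ one has $|p(\xi,k)|\gtrsim 1+|\xi|^2$. Each $P_k$ is therefore a uniformly elliptic constant-coefficient Bessel-type operator on $\R^n$, and the Mikhlin multiplier theorem applied to $p(\xi,k)^{-1}$ gives the uniform-in-$k$ bound
\begin{equation*}
\sum_{|\beta|\leq 2}|k|^{2-|\beta|}\|\partial_x^\beta u_k\|_{L^p(\R^n)}\lesssim \|f_k\|_{L^p(\R^n)}.
\end{equation*}
To reassemble the modes into an $L^p$ bound on $\R^n\times\m T^m$, I would view $u-\wb u$ and $f-\wb f$ as $L^p(\m T^m)$-valued functions of $x$ and interpret $P^{-1}$ on the zero-mean subspace as a Fourier multiplier on $\R^n$ whose operator-valued symbol is diagonal on Fourier modes of $\m T^m$, with entries $p(\xi,k)^{-1}\chi_{\{k\neq 0\}}$. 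This symbol satisfies the hypotheses of a vector-valued Mikhlin theorem uniformly in $k$, yielding the unweighted estimate $\|u-\wb u\|_{W^{2,p}(\R^n\times\m T^m)}\lesssim \|f-\wb f\|_{L^p(\R^n\times\m T^m)}$.

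To upgrade to the weighted norms $\tilde W^p_{s,\gamma}$, I set $v:=\langle x\rangle^\gamma(u-\wb u)$ and $g:=\langle x\rangle^\gamma(f-\wb f)$, which remain zero-mean on $\m T^m$ since $\langle x\rangle^\gamma$ is $\theta$-independent. The equation becomes $Pv=g+\mathcal{R}v$, where $\mathcal{R}$ is a first-order differential operator in $x$ with coefficients decaying like $\langle x\rangle^{-1}$ and $\langle x\rangle^{-2}$. Applying the unweighted estimate to $v$ and absorbing $\mathcal{R}v$ outside a large ball (using coefficient decay) together with a local bound on a compact piece (using Poincar\'e on $\m T^m$ via the zero-mean condition) delivers the weighted estimate; the injectivity statement $P(u-\wb u)=0\Rightarrow u-\wb u=0$ then follows immediately by taking $g=0$. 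The main obstacle is the recombination step for $p\neq 2$: while Parseval handles $p=2$ trivially, for general $p$ one genuinely needs the vector-valued Mikhlin argument, and the power weight $\langle x\rangle^{p\gamma}$ for arbitrary $\gamma\in\R$ (which is not Muckenhoupt $A_p$ in general) must be introduced through the commutator/change-of-unknown step rather than via direct weighted multiplier theory.
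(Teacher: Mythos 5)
Your overall strategy is genuinely different from the paper's. The paper never passes through a Fourier--multiplier theorem: it establishes the $L^2$ case (Lemma \ref{lm2}) by Plancherel plus weight iteration plus duality, then proves explicit pointwise bounds on the convolution kernel $h$ of $\nabla^2 P^{-1}$ (Lemma \ref{lmh}), including the near--diagonal singularity $|h|\lesssim |x,\theta|^{-d}$, $|\nabla h|\lesssim|x,\theta|^{-d-1}$ and rapid decay at infinity, and finally invokes Calder\'on--Zygmund theory for the \emph{weighted} kernel $K((x,\theta),(y,\chi))=\frac{\langle x\rangle^\gamma}{\langle y\rangle^\gamma}\,h((x,\theta)-(y,\chi))$. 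That is, the weight is built directly into the CZ operator, so no commutator or bootstrap is needed. Your route instead aims at the unweighted $W^{2,p}$ estimate via a multiplier theorem and only then introduces the weight by a change of unknown. This is viable in principle (and indeed simpler at the unweighted stage, since the full symbol $p_2(\xi,k)P(\xi,k)^{-1}$ extended to $k\in\R^m$ is Mikhlin on $\R^{n+m}$, and the zero--mean restriction kills the singular $k=0$ mode; a de Leeuw transference from $\R^{n+m}$ to $\R^n\times\m T^m$ is cleaner than the ``diagonal operator--valued symbol'' picture you describe). It would avoid the rather delicate telescoping argument the paper needs to handle the multiplication by $(e^{i\theta_j}-1)$ in the kernel estimates.

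However, the way you describe the weighted step has a genuine gap. Writing $v=\langle x\rangle^\gamma(u-\wb u)$ and $Pv=g+\mathcal R v$, and then ``absorbing $\mathcal R v$ outside a large ball together with a local bound on a compact piece using Poincar\'e on $\m T^m$,'' does not close. After splitting at radius $R$, the region $|x|\leq R$ contributes a term $\|\nabla v\|_{L^p(B_R\times\m T^m)}$, and Poincar\'e only trades the zeroth--order piece for a first derivative; it gives you no way to discard or absorb the first--order local term. What actually closes the argument is the iterative device the paper uses in Lemma \ref{lm2}: because $\mathcal R v=\mathcal R_\gamma(u-\wb u)$ has coefficients $\langle x\rangle^{\gamma-1}$ and $\langle x\rangle^{\gamma-2}$, for $0<\gamma\leq1$ these are bounded and $\|\mathcal R_\gamma(u-\wb u)\|_{L^p}\lesssim\|u-\wb u\|_{W^{1,p}}\lesssim\|f-\wb f\|_{L^p}$ by the \emph{unweighted} estimate you just proved; then you bootstrap in unit steps of $\gamma$, and for $\gamma<0$ you argue by duality in $L^p$--$L^{p'}$, exactly as the paper does in the $L^2$ case. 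You should replace the ``absorption + Poincar\'e'' paragraph with this iteration/duality scheme; as written, the step does not establish the claimed weighted estimate.
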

	
	We begin by introducing some useful preliminaries and notations.
	\par{	Let us take the Fourier transform in variables $x,\theta$ of the equation $P (u-\wb u)=f-\wb f$. We can write
		$$P(\xi,k) \q F_{x,\theta}(u-\wb u)(\xi,k)=\q F_{x,\theta}(f-\wb f)(\xi,k),$$
		where we denote by $(\xi,k)$ the Fourier variable associated to $(x,\theta)$ (note that $k\in \m Z^m$), and $P(\xi,k)$ is either a scalar function or a $d\times d$ matrix, whose coefficients are homogeneous polynomials of order $2$ in $\xi$ and $k$.
		The assumption that $P$ is elliptic means that $P(\xi,k)$ is invertible for $(\xi,k)\neq (0,0)$. By homogeneity, this implies that, for $(\xi,k)\neq (0,0)$,
		\begin{equation}\label{condel}\|P^{-1}(\xi,k)\|\leq \frac{C}{|\xi|^2+k^2}.
		\end{equation}
		Since $\q F_{x,\theta}(u-\wb u)(\xi,k)$ and $\q F_{x,\theta}(f-\wb f)(\xi,k)$ are supported away from $k=0$, we can write
		$$ \q F_{x,\theta}(u-\wb u)(\xi,k)=P^{-1}(\xi,k)\q F_{x,\theta}(f-\wb f)(\xi,k),$$
		and
		\begin{equation}\label{solu}u-\wb u = \q F^{-1}_{x,\theta}(P^{-1}(\xi,k)\q F_{x,\theta}(f-\wb f)(\xi,k)).
		\end{equation}
		By taking order l derivatives, we obtain 
		\begin{equation}\label{hj}
			\nabla_{x,\theta}^l (u-\wb u)  =  \q F^{-1}_x\left(\sum_{\substack{k\in \m Z^{m*}}}p_l(\xi,k) P(\xi,k)^{-1}\q F_{x,\theta}(f-\wb f)(\xi,k) e^{ik\cdot\theta}\right),
		\end{equation}
		where we denote by $p_l$ a homogeneous polynomial or rational fraction of order $l$.
		\par Our goal is to study the behaviour of derivatives of $u-\bar{u}$ of order zero, one and two. Given \eqref{hj}, we sometimes choose to concentrate our attention only on estimates of order two, the more delicate case, when the other estimates follow trivially using the same reasoning. This is to avoid adding unnecessarily heavy notation.
		Consequently, we write 
		\begin{equation}\label{h def}
			\nabla_{x,\theta}^2(u-\wb u)= h*(f-\wb f),
		\end{equation}	
		where the kernel $h$ can be expressed as
		\begin{equation}\label{defh}
			h(x,\theta)=F^{-1}_x \left(\sum_{\substack{k\in \m Z^{m*}}}p_2(\xi,k)P(\xi,k)^{-1}e^{ik\cdot\theta}\right).
		\end{equation}
		Let us denote the corresponding symbol as
		\begin{equation*}
			a(\xi,k)= p_2(\xi,k)P(\xi,k)^{-1}.
		\end{equation*}
		\par With the preliminaries out of the way, we can focus on the proof of Proposition \ref{lms1}. The desired estimate follows from the next two lemmas. The first is simply the corresponding $L^2$ estimate. Note that we are interested in both positive and negative weight values $\delta$ and $\gamma$. This becomes useful when studying the behaviour of the adjoint operator $P^*$ later in the paper.
		\begin{lm}\label{lm2}
			Let $\gamma,\delta \in \m R$ and  $u \in W^2_{2,\delta,\gamma}(\R^n\times\m T^m), f\in W^2_{0,\delta+2,\gamma}(\R^n\times \m T^m)$ such that $P (u-\wb u)=f-\wb f$. Then we have
			$$\|u-\wb u\|_{\tilde{W}^2_{2,\gamma}}\lesssim \|f-\wb f\|_{\tilde{W}^2_{0,\gamma}}.$$
		\end{lm}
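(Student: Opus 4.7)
Since $P$ has constant coefficients, a Fourier decomposition in $\theta$ reduces the estimate to a family of constant-coefficient elliptic equations on $\R^n$, one for each non-zero mode $k\in\m Z^{m*}$. The key advantage is that on non-zero modes the inverse symbol $P(\xi,k)^{-1}$ avoids its singularity at the origin, which is what allows uniform bounds.

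\emph{The unweighted case ($\gamma=0$).} Write $u-\bar u=\sum_{k\in\m Z^{m*}}u_k(x)e^{ik\cdot\theta}$ and likewise for $f-\bar f$; each mode satisfies $P_k u_k=f_k$ on $\R^n$, where $P_k(\xi):=P(\xi,k)$. By the ellipticity bound \eqref{condel} and homogeneity of $p_l$, the Fourier multipliers $p_l(\xi,k)P(\xi,k)^{-1}$ for $l=0,1,2$ are uniformly bounded on $\R^n\times \m Z^{m*}$, since $|\xi|^2+|k|^2\geq 1$ on this set. Plancherel in both $x$ and $\theta$ then yields $\|u-\bar u\|_{\widetilde W^2_{2,0}}\lesssim \|f-\bar f\|_{\widetilde W^2_{0,0}}$.

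\emph{The weighted case.} Since $\langle x\rangle^\gamma$ depends only on $x$, I treat each $\theta$-mode separately. For $k\neq 0$ the operator $P_k^{-1}$ is convolution with a kernel $K_k$ on $\R^n$ whose Fourier symbol $P_k(\xi)^{-1}$ extends holomorphically to a complex strip $|\Im\xi|<c|k|$, obtained from the ellipticity of $P$ by a compactness argument after rescaling by $|k|$. A Paley--Wiener type estimate then gives exponential decay $|\nabla^j K_k(z)|\lesssim e^{-c|k||z|}$ for $j\leq 2$, so $K_k$ and its derivatives lie in every polynomially weighted $L^1(\R^n)$. Schur's test applied to the kernel $\langle x\rangle^\gamma K_k(x-y)\langle y\rangle^{-\gamma}$ --- using $\langle x\rangle^\gamma/\langle y\rangle^\gamma\lesssim (1+|x-y|)^{|\gamma|}$ to swap the weights into the exponential --- shows that convolution with $K_k$ and its derivatives up to order $2$ is bounded on $L^2(\R^n,\langle x\rangle^{2\gamma}dx)$ uniformly in $k$. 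Summing over $k$ via Plancherel in $\theta$ completes the proof.

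\emph{Main obstacle.} The core difficulty is the exponential kernel decay with constants uniform in $k$, i.e.\ that the strip width of analyticity scales linearly in $|k|$. This amounts to controlling the smallest eigenvalue (or singular value, in the vector case) of $P(\xi,k)$ under small complex perturbations of the real variable $\xi$; a compactness argument on the unit sphere of $(\xi,k)$-directions combined with the homogeneity of $P$ suffices, but the vector-valued case requires care in tracking matrix norms. Everything else is a routine Plancherel/Schur computation.
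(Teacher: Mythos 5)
Your approach is genuinely different from the paper's. The paper treats the weighted case by a commutator bootstrap: for $0<\gamma\leq 1$ it writes $P\left((u-\wb u)\langle x\rangle^\gamma\right)=(f-\wb f)\langle x\rangle^\gamma+g$, with $g$ controlled in $\tilde W^2_{0,0}$ by strictly lower-order and lower-weight norms of $u-\wb u$, applies the unweighted Plancherel estimate, and then iterates in $\gamma$; negative $\gamma$ is obtained by duality using the self-adjointness of $P$. Your route, via mode-by-mode Green's functions and exponential kernel decay, is more quantitative but runs into a difficulty at the top order.

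The gap is in the second-derivative estimate. The kernel you feed into Schur's test for $j=2$ is $\nabla^2 K_k=\q F^{-1}\left(p_2(\xi,k)P(\xi,k)^{-1}\right)$, whose symbol is homogeneous of degree $0$ in $(\xi,k)$ and therefore does not decay as $|\xi|\to\infty$. Its inverse Fourier transform is not an $L^1(\R^n)$ function but a Calder\'on--Zygmund type distribution with a non-integrable singularity of order $|z|^{-n}$ at the origin (plus, in general, a multiple of the Dirac mass). Paley--Wiener controls decay at infinity, not regularity at $z=0$; the bound $|\nabla^2 K_k(z)|\lesssim e^{-c|k||z|}$ cannot hold near $z=0$, so the claim that ``$K_k$ and its derivatives lie in every polynomially weighted $L^1$'' fails at $j=2$, and Schur's test is inapplicable for the term you most need, namely $\|\nabla^2(u-\wb u)\|_{\tilde W^2_{0,\gamma}}$. (This is exactly the singularity the paper confronts in Lemma~\ref{lmh}, where only $|h(z)|\lesssim|z|^{-d}$ is available and Calder\'on--Zygmund theory, not Schur, is invoked in the $L^p$ result Proposition~\ref{lms1}.) For $j=0,1$ the kernel singularities $|z|^{2-n}$ and $|z|^{1-n}$ are locally integrable, so that part of your argument is fine once the uniformity in $k$ is secured, as you note.

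Two repairs are possible. Either split $\nabla^2 K_k$ into a near-diagonal part supported in $|x-y|\leq 1$ — where $\langle x\rangle^\gamma/\langle y\rangle^\gamma$ is bounded, and $L^2$-boundedness comes from Plancherel together with a partition of unity into unit balls on which the weight is essentially constant — plus a far part where your exponential decay dominates any polynomial weight and Schur does apply; or use Schur only for $j\leq 1$ and recover $j=2$ from the equation $P_k u_k=f_k$ via elliptic regularity, i.e.\ apply the unweighted $H^2$ estimate to $(u-\wb u)\langle x\rangle^\gamma$ and absorb the commutator terms, which is precisely what the paper does. As for your stated main obstacle, the $k$-uniform strip of analyticity does hold: writing $\xi=\xi_R+i\xi_I$, ellipticity and homogeneity give $\left|P(\xi_R+i\xi_I,k)\right|\geq \operatorname{Re} P(\xi_R+i\xi_I,k)\gtrsim |\xi_R|^2+|k|^2-C|\xi_I|^2$, so $P^{-1}$ extends to $|\xi_I|\leq c|k|$ with $\|P^{-1}\|\lesssim(|\xi_R|^2+|k|^2)^{-1}$, and a compactness argument on the unit sphere of $(\xi,k)$ handles the matrix case as you anticipate.
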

		\begin{proof}
			From \eqref{condel}, \eqref{solu} and its equivalent for derivatives of $u-\bar{u}$ of order one and two, we obtain, thanks to the Plancherel formula, that
			\begin{equation}\label{est0}
				\|u-\wb u\|_{\tilde{W}^2_{2,0}}\lesssim \|f-\wb f\|_{\tilde{W}^2_{0,0}}.
			\end{equation}
			Note that $\tilde{W}^2_{2,0}=H^2$ and $\tilde{W}^2_{0,0}=L^2$. We can prove Lemma \ref{lm2} iteratively for $\gamma>0$ in the following way. 
			First, we note that
			\begin{equation}\label{eqgamma}P \left((u-\wb u)\langle x\rangle^{\gamma}\right)=(f-\wb f)\langle x\rangle^\gamma+g,
			\end{equation}
			with 
			$$\|g\|_{\tilde W^2_{0,0}}\lesssim \|u-\wb u \|_{\tilde{W}^2_{1,\gamma-1}}+\|u-\wb u \|_{\tilde{W}^2_{0,\gamma-2}}.$$
			Consequently, if we take $0<\gamma\leq 1$ we obtain
			\begin{align*}
				\|	(u-\wb u)\langle x\rangle^{\gamma}\|_{\tilde W^2_{2,0}}&\lesssim
				\|(f-\wb f)\langle x\rangle^{\gamma}\|_{\tilde{W}^2_{0,0}}+\|g\|_{\tilde{W}^2_{0,0}}\\
				& \lesssim \|f-\wb f\|_{\tilde{W}^2_{0,\gamma}}+\|u-\wb u \|_{\tilde{W}^2_{1,0}}\\
				&\lesssim \|f-\wb f\|_{\tilde{W}^2_{0,\gamma}}
			\end{align*}
			thanks to \eqref{est0}. Moreover,
			$$\|u-\wb u\|_{\tilde{W}^2_{2,\gamma}} \lesssim \|	(u-\wb u)\langle x\rangle^{\gamma}\|_{\tilde W^2_{2,0}}
			+ \|u-\wb u\|_{\tilde{W}^2_{1,0}}.$$
			Thus, Lemma \ref{lm2} holds for $0<\gamma \leq 1$ and we can continue inductively to prove Lemma \ref{lm2} for all $\gamma>0$.
			
			We prove Lemma \ref{lm2} in the case $\gamma<0$ by duality, using the fact that $P$ is self-adjoint.
			\begin{align*}
				\|\langle x \rangle^\gamma P^{-1}(f-\overline{f})\|_{L^2}
				=&\sup_{h\in L^2, \; \|h\|_{L^2}=1} \langle \langle x \rangle^\gamma P^{-1}(f-\overline{f}),h\rangle\\
				=&\sup_{h\in L^2, \; \|h\|_{L^2}=1} \langle \langle x \rangle^\gamma P^{-1}(f-\overline{f}),h-\overline{h}\rangle\\
				=&\sup_{h\in L^2, \; \|h\|_{L^2}=1} \langle f-\overline{f},P^{-1}(  \langle x \rangle^\gamma (h-\overline{h}))\rangle\\
				\lesssim & \sup_{h\in L^2, \; \|h\|_{L^2}=1} \|\langle x \rangle^\gamma(f-\overline{f})\|_{L^2}\| \langle x \rangle^{-\gamma}P^{-1}(  \langle x \rangle^\gamma (h-\overline{h}))\|_{L^2}
			\end{align*}
			We can apply Lemma \ref{lm2} for $-\gamma>0$
			\begin{align*}
				\|\langle x \rangle^\gamma P^{-1}(f-\overline{f})\|_{L^2}
				\lesssim& \|f-\wb f\|_{\tilde{W}^2_{0,\gamma}}\sup_{h\in L^2, \; \|h\|_{L^2}=1} \|\langle x \rangle^\gamma (h-\overline{h})\|_{W^2_{0,-\gamma}}\\
				\lesssim& \|f-\wb f\|_{\tilde{W}^2_{0,\gamma}}
			\end{align*}
			We have proved the desired weighted $L^2$ estimate. It remains to show the full $\tilde W^2_{2,\gamma}$ estimate. For this, we use again \eqref{eqgamma}, which implies that
			$$\|u-\wb u\|_{\tilde{W}^2_{2,\gamma}} \lesssim \|f-\wb f\|_{\tilde{W}^2_{0,\gamma}}+\|u-\wb u \|_{\tilde{W}^2_{1,\gamma-1}}+\|u-\wb u \|_{\tilde{W}^2_{0,\gamma-2}}.$$
			We can use interpolation to estimate $\|u-\wb u \|_{\tilde{W}^2_{1,\gamma-1}}$.  We have
			\begin{align*}
				\|u-\wb u \|_{\tilde{W}^2_{1,\gamma-1}} &\lesssim \|(u-\wb u )\langle x \rangle^{\gamma-1}\|_{H^1}\\
				&\lesssim \ep \|(u-\wb u )\langle x \rangle^{\gamma-1}\|_{H^2}+\frac{1}{\ep}\|(u-\wb u )\langle x \rangle^{\gamma-1}\|_{L^2}\\
				&\ep \|u-\wb u \|_{\tilde{W}^2_{2,\gamma}}+\frac{1}{\ep}\|(u-\wb u )\|_{\tilde{W}^2_{0,\gamma}}
\end{align*}
			We obtain
			$$
			\|u-\wb u\|_{\tilde{W}^2_{2,\gamma}} \lesssim \|f-\wb f\|_{\tilde{W}^2_{0,\gamma}}+\ep \|u-\wb u\|_{\tilde{W}^2_{2,\gamma}} +C(\ep)\|u-\wb u \|_{\tilde{W}^2_{0,\gamma-1}}.$$
			We can use the $L^2$ estimate we have proved and absorb the term $\ep \|u-\wb u\|_{\tilde{W}^2_{2,\gamma}} $ to write
			$$\|u-\wb u\|_{\tilde{W}^2_{2,\gamma}} \lesssim \|f-\wb f\|_{\tilde{W}^2_{0,\gamma}},$$
			which concludes the proof of Lemma \ref{lm2}.
		\end{proof}
		
		\par The second lemma we need for Proposition \ref{lms1} follows. The proof uses harmonic analysis applied to singular integrals.

		\begin{lm}\label{lmh} We recall that $|x,\theta|=(x_1^2+..+x_n^2+(e^{i\theta_1}-1)^2+..+(e^{i\theta_m}-1)^2)^\frac{1}{2}$. We have the following useful bounds on $h$, defined by \eqref{defh}.
			\par $(i)$ For $0<|x,\theta|<1$,
			\begin{equation*}
				| h(x,\theta)|\lesssim \frac{1}{|x,\theta|^{d}}\quad \text{ and }\quad |\nabla h(x,\theta)|\lesssim \frac{1}{|x,\theta|^{d+1}}.
			\end{equation*}
			\par $(ii)$ For $|x,\theta|\geq 1$, we have, for $|k|\leq 1$ and $N\in \m N$
			\begin{equation*}
				|\nabla ^k h(x,\theta)|\leq \frac{C_N}{|x,\theta|^N}.
			\end{equation*}
		\end{lm}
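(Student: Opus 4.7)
The kernel $h$ defined by \eqref{defh} is the convolution kernel on $\R^n\times\m T^m$ of the Fourier multiplier with symbol $a(\xi,k)=p_2(\xi,k)P(\xi,k)^{-1}$, which by \eqref{condel} together with the homogeneity of $p_2$ of degree $2$ is homogeneous of degree $0$ and smooth outside $(\xi,k)=0$. The plan is to obtain both $(i)$ and $(ii)$ by classical harmonic-analysis estimates on this multiplier, handling the discrete variable $k$ by comparison with its continuous analogue.

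For the near-origin bound $(i)$, I would perform a Littlewood--Paley decomposition $a=\sum_{j\ge 0}a\psi_j$ with $\psi_j$ a smooth cutoff supported on the dyadic annulus $\{|(\xi,k)|\sim 2^j\}$, and write $h=\sum_j h_j$ accordingly. The trivial bound $|a\psi_j|\lesssim 1$ on a set of volume $\sim 2^{jd}$ gives $|h_j|\lesssim 2^{jd}$, while integration by parts in $\xi$ together with finite-difference summation in $k$ yields the gain $(1+2^j|x,\theta|)^{-N}$ for any $N$. Summing in $j$, the peak at $2^j\sim |x,\theta|^{-1}$ produces the claimed bound $|h(x,\theta)|\lesssim |x,\theta|^{-d}$; the derivative bound follows by inserting an additional factor $|(\xi,k)|\sim 2^j$ in the symbol, contributing one extra $2^j$.

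For the decay estimate $(ii)$, I would directly integrate by parts in $\xi$. Since $\m T^m$ is compact, the hypothesis $|x,\theta|\ge 1$ is, up to a bounded factor, equivalent to $|x|\gtrsim 1$. For each fixed $k\ne 0$, the symbol $a(\xi,k)$ is smooth on all of $\R^n$ (no singularity at $\xi=0$ since $k\ne 0$) with $|\partial_\xi^\alpha a(\xi,k)|\lesssim (|\xi|+|k|)^{-|\alpha|}$, so repeated integration by parts gives $|x|^{|\alpha|}|\hat a_k(x)|\lesssim |k|^{n-|\alpha|}$ for $|\alpha|>n$. The sum over $k\in\m Z^{m*}$ then converges once $|\alpha|>n+m=d$, yielding $|h(x,\theta)|\lesssim |x|^{-|\alpha|}$ with $|\alpha|$ arbitrarily large. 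The gradient estimate is obtained by inserting an additional factor of $(\xi,k)$ in the symbol, which is absorbed by choosing $|\alpha|$ slightly larger.

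The main obstacle lies in $(i)$: the discrete nature of $k$ means that the Littlewood--Paley piece at the coarsest scale $j=0$ involves only finitely many $k$'s (since $|k|\ge 1$ is forced) and must be handled separately as a smooth bounded remainder, while at scale $2^j$ one has to check that the sum over $|k|\lesssim 2^j$ is comparable, up to bounded multiplicative constants, to the corresponding integral over $\R^m$. A cleaner alternative, which I would present in parallel, is to compare $h$ with the Calder\'on--Zygmund kernel $\tilde H$ on $\R^{n+m}$ associated to the symbol $\tilde a(\xi,\eta)$ extended homogeneously to $\R^{n+m}\setminus\{0\}$: then $|\tilde H(z)|\lesssim |z|^{-d}$ is classical, and a Poisson summation in $\theta$ together with careful treatment of the zero-mode subtraction transfers the estimate to $h$, the subtracted zero-mode term being of the same or lower order than the leading singularity in the relevant regime.
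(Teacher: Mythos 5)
Your plan for part $(ii)$ coincides with the paper's argument: one shows $\nabla_\xi^N a\in L^1$ for $N$ large, giving $|x|^N|h|\lesssim C_N$, and the regime $|x,\theta|\geq 1$ with $|x|$ small is a bounded set where the estimate is trivial. For part $(i)$ the overall strategy — a dyadic decomposition, a size bound $2^{jd}$ per piece, a decay gain through $\xi$-derivatives and discrete $k$-differences, and a geometric sum peaking at $2^j\sim |x,\theta|^{-1}$ — is also in the same spirit. However, you take the dyadic partition in the joint variable $(\xi,k)$, whereas the paper partitions only in $\xi$ (the cut-offs $\delta_j(\xi)$), with $k$ ranging over all of $\m Z^{m*}$ in each piece. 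This is not merely a cosmetic difference, because the crucial technical content of the paper's proof is elsewhere: when one gains decay in $\theta$ by multiplying by $(e^{i\theta_i}-1)$, the resulting Abel summation over $k_1$ is performed on $\m Z^{m*}$ rather than $\m Z^m$, and this leaves genuine boundary terms $p(\xi,\pm 1,0)$ near the excluded mode $k=0$ that do not gain homogeneity from further $(e^{i\theta_i}-1)$ factors. The paper isolates these as the remainder kernels $h_j^\gamma$ and estimates them by a separate, carefully balanced choice of $|\alpha|+|\beta|$; without this, the claimed $(1+2^j|x,\theta|)^{-N}$ gain does not simply follow.

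You do flag ``the main obstacle'' as arising from the discreteness of $k$, but you locate it in the coarsest scale $j=0$ and in a sum-versus-integral comparison. That is not where the problem is: the remainder terms from the hole at $k=0$ appear at every scale $j$ for which the shell $\{|(\xi,k)|\sim 2^j\}$ intersects $|k|=1$, so they are not localized to $j=0$, and they cannot be reduced to a discrete-to-continuous comparison because there is no continuous counterpart to the excluded lattice point. This is the concrete gap in your sketch. Your proposed Poisson-summation alternative (periodizing the $\R^{n+m}$ Calder\'on--Zygmund kernel in $\theta$ and subtracting the zero mode) is a genuinely different route, and conceptually clean; but since $\tilde H$ decays only like $|z|^{-d}$ the periodization sum is not absolutely convergent, and the subtraction of the zero mode has to be performed inside the sum term by term, which reintroduces essentially the same boundary analysis. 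Either way, the treatment of the excluded mode is where the actual work lies, and your sketch does not yet contain it.
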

		
		
		\begin{proof}[Proof of Lemma \ref{lmh}] $(i)$ {This proof is an adaptation of \cite{Ste93}, where periodic directions are added.} In order to obtain the desired estimate, we divide the $\xi$ space into suitable spherical shells. Given a smooth cut-off function $\delta_0$ defined on $\R^n$ with $\delta_0(\xi)=1$ for $|\xi|\leq 1$ and $\delta_0(\xi)=0$ on $|\xi|\geq 2$, we define a {difference} function $\delta_1(\xi)=\delta_0(\xi)-\delta_0(2\xi)$ and $\delta_j=\delta_1(2^{-j}\xi)$ for $j\geq 1$. We obtain the following partition of unity
			\begin{equation*}
				1=\delta_0(\xi)+\sum_{j=1}\delta_j(\xi). 
			\end{equation*}
			We define $a_0(\xi,k)=a(\xi,k)\delta_0(\xi)$ and $a_j(\xi,k)=a(\xi,\theta)\delta_j(\xi)$, supported in $2^j\leq |\xi|\leq 2^{j+1}$, for $j\in\m N^*$. Furthermore, let 		
			$$h_j(x,\theta)=\q F^{-1}_{x,\theta}(a_j(\xi,k)),$$
			for all $j\in\m N$. In order to obtain estimates on $h$ and $\nabla h$ away from the origin, we study quantities of the type
			$$(e^{i\theta}-1)^\alpha x^\beta h_j\quad\text{ and }\quad(e^{i\theta}-1)^\alpha x^\beta \nabla_{x,\theta}h_j,$$
			where $\alpha,\beta$ are multi-indices and $(e^{i\theta}-1)^\alpha=(e^{i\theta_1}-1)^{\alpha_1}...(e^{i\theta_{m}}-1)^{\alpha_m}$.
			We note that taking a derivative with respect to $x$ or $\theta$ is equivalent to multiplying by an order one polynomial in $\xi,k$ in the Fourier side: 
			$$ \nabla_{x,\theta}h_j=F^{-1}_x\left(\sum_{\substack{k\in \m Z^{m*}}}p_1(\xi,k)a(\xi,k)\delta_j(\xi)e^{ik\cdot\theta}\right).$$
			Multiplying by $x^\beta$ is equivalent to deriving $\beta$ times with respect to $\xi$ on the Fourier side. Since we are deriving homogeneous functions, $\nabla_\xi^\beta (p_1(\xi,k)a(\xi,k))$ is homogeneous of order $1-|\beta|$.
			For the multiplication by $(e^{i\theta_i}-1)$, we note that we have a telescopic sum. For any function $p(\xi,\theta)$, we have
			\begin{align}
				\label{terme}&	\sum_{\substack{ k\in \m Z^{m*}}}(e^{i\theta_1}-1)p(\xi,k)e^{ik\cdot\theta} \\
				\nonumber& =  \sum_{\substack{k'\in \m Z^{m-1}}}e^{ik'\cdot\theta'}\sum_{k_1 \in \m Z, (k_1,k')\neq (0,0)} (e^{i\theta_1}-1)p(\xi,k)e^{i\theta_1 k_1} \\
				\label{terme1}& =   \sum_{\substack{k'\in \m Z^{m-1}}}e^{ik'\cdot\theta'}\sum_{k_1\in \m Z, \;(k_1,k')\neq (0,0),\; (k_1-1,k')\neq (0,0)} e^{i\theta_1 k_1}(p(\xi,k_1-1,k')-p(\xi,k_1,k'))\\
				\label{terme2}&\qquad - p(\xi,1,0)e^{i\theta_1}+p(\xi,-1,0).
			\end{align}
			Here, we have written $\theta=(\theta_1,\theta')$, $\theta'\in\m T^{m-1}$. We can write
			$$p(\xi,k_1-1,k')-p(\xi,k_1,k')= \int_{k_1-1}^{k_1} \nabla_{u}p(\xi,u,k')\,du,$$
			{and if $p$ is homogeneous of degree $l$, then $\nabla_{u}p(\xi,u,k')$ is homogeneous of degree $l-1$.  Multiplying \eqref{terme} by another factor $(e^{i\theta_i}-1)$, we can apply the same procedure to separate the term \eqref{terme1} into a term which is a sum over $k\in \m Z^m$ of homogeneous functions of order $l-2$, plus a term which is of order $l-1$, but which is not summed over $k$.  However, the term  \eqref{terme2}, which is of degree $l$, does not lose degrees of homogeneity with a multiplication by $(e^{i\theta_i}-1)$, so we keep this factor unchanged.  We can iterate the procedure, and note that for integrable functions we can always estimate a discrete sum by a continuous integral. We write }
			\begin{align*}
				|(e^{i\theta}-1)^\alpha x^\beta \nabla_{x,\theta}h^\gamma_j|\leq &\int_{supp\,\delta_j} \int_{\rho \in \m R^m, |\rho|\geq 1} p_{1-|\alpha |-|\beta|}(\xi,\rho)\,d\rho d\xi\\
				&+ \sum_{|\gamma| <|\alpha|}|(e^{i\theta}-1)^\gamma| \int_{supp\,\delta_j} p_{2-|\alpha|-|\beta|+|\gamma|}(\xi,1,0)\,d\xi,
			\end{align*}
			where we recall that $ p_{1-|\alpha|-|\beta|}$ and $p_{2-|\alpha|-|\beta|+|\gamma|}$ are homogeneous functions. The second term represents the remainder after we extract the telescopic sums. Similarly, we obtain 
			\begin{align*}
				|(e^{i\theta}-1)^\alpha x^\beta h_j|\leq &\int_{supp\,\delta_j} \int_{\rho \in \m R^m, |\rho|\geq 1} p_{-|\alpha |-|\beta|}(\xi,\rho)\,d\rho d\xi\\
				&+ \sum_{|\gamma| <|\alpha|}|(e^{i\theta}-1)^\gamma| \int_{supp\,\delta_j} p_{1-|\alpha|-|\beta|+|\gamma|}(\xi,1,0)\,d\xi.
			\end{align*}
			In what follows, the estimates for $\nabla h_j$ and $h_j$ are obtained in the exact same way. In order to avoid the introduction of unnecessarily heavy notation, we only write the proof for $h_j$. 
			\par By writing $\xi =|\xi|\omega$, we get
			\begin{align*}
				& \int_{\rho \in \m R^m, |\rho|\geq 1} p_{-|\alpha |-|\beta|}(\xi,\rho)d\rho\\
				&\lesssim  \int_{\rho \in \m R^m, |\rho|\geq 1} |\xi|^{-|\alpha|-|\beta|} p_{-|\alpha |-|\beta|}(\omega,\frac{\rho}{|\xi|})\,d\rho \\
				&\lesssim  \int_{\rho \in \m R^m, |\rho|\geq \frac{1}{|\xi|}} |\xi|^{m-|\alpha|-|\beta|} p_{-|\alpha |-|\beta|}(\omega,\rho)\,d\rho \\
				&\lesssim  \int_{\rho \in \m R^m}|\xi|^{m-|\alpha|-|\beta|} (1+|\rho|)^{-|\alpha|-|\beta|}\,d\rho \\
				&\lesssim |\xi|^{m-|\alpha|-|\beta|} 
			\end{align*}
			where we have used $|\alpha|+|\beta|>m$. Integrating on $supp\,\delta_j$, where $2^j\leq |\xi|\leq 2^{j+1}$, we obtain
			$$ \int_{supp\,\delta_j} \int_{\rho \in \m R^m, |\rho|\geq 1} p_{-|\alpha|-|\beta|}(\xi,\rho)\,d\rho d\xi \leq 2^{j(d-|\alpha|-|\beta|)}$$
			and, for the remainder term,
			$$|(e^{i\theta}-1)^\gamma|\int_{supp\,\delta_j} p_{1-|\alpha|-|\beta|+|\gamma|}(\xi,1,0)\,d\xi \leq |(e^{i\theta}-1)^\gamma| 2^{j(1+n-|\alpha|-|\beta|+|\gamma|)}.$$
			Finally, we are able to draw profit from the partitioning of the frequency space we previously introduced. We denote $h_j(x,\theta)=h_j^{princ}+\sum_\gamma h_j^\gamma$ for all $j$, where
			\begin{align*}
				|x^\beta(e^{i\theta}-1)^\alpha h_j^{princ}|&\leq 2^{j(d-|\alpha|-|\beta|)},\\
				|x^\beta (e^{i\theta}-1)^\alpha h_j|&\leq |(e^{i\theta}-1)^\gamma |2^{j(1+n-|\alpha|-|\beta|+|\gamma|)}.
			\end{align*}
			We fix $(x,\theta)$. As we are interested in estimations on $h=\sum_j h_j$, we split the sum in the following way:
			\begin{equation*}
				\sum_j h_j^{princ}(x,\theta)=\sum_{2^j\leq\frac{1}{|x,\theta|}}h_j^{princ}+\sum_{2^j\geq \frac{1}{|x,\theta|}}h_j^{princ}.
			\end{equation*}
			For the first part, fixing $|\alpha|+|\beta|=d-1$ leads to
			\begin{equation*}
				\sum_{2^j\leq\frac{1}{|x,\theta|}}h_j^{princ}\leq \frac{1}{|x,\theta|^{d-1}}\sum_{2^j\leq \frac{1}{|x,\theta|}}2^j\lesssim \frac{1}{|x,\theta|^{d}}.
			\end{equation*}
			For the second part, we fix $|\alpha|+|\beta|=d+1$ and obtain
			\begin{equation*}
				\sum_{2^j\geq \frac{1}{|x,\theta|}}h_j^{princ}\leq \frac{1}{|x,\theta|^{d+1}} \sum_{2^j\geq \frac{1}{|x,\theta|}} 2^{-j}= \frac{1}{|x,\theta|^{d+1}} \sum_{2^{-j}\leq |x,\theta|} 2^{-j}\leq \frac{1}{|x,\theta|^{d}}.
			\end{equation*}
			Similarly, we study the remainder terms $h^{\gamma}$, which can be written as
			\begin{equation*}
				\sum_j h^\gamma_j(x,\theta)=\sum_{2^j\leq\frac{1}{|x,\theta|}}h^\gamma_j+\sum_{2^j\geq\frac{1}{|x,\theta|}} h^\gamma_j.
			\end{equation*}
			We denote $A=|\alpha|+|\beta|$. For $A<1+n$ we have
			\begin{align*}
				\sum_{2^j\leq\frac{1}{|x,\theta|}}h^\gamma_j& \leq\frac{1}{|x,\theta|^A}\sum_{2^j\leq\frac{1}{|x,\theta|}}2^{j(1+n-A+|\gamma|)}|(e^{i\theta}-1)^\gamma|\\
				& \leq\frac{1}{|x,\theta|^A}\sum_{2^j\leq\frac{1}{|x,\theta|}} 2^{j(1+n-A)}\\
				& \leq \frac{1}{|x,\theta|^{1+n}}\\
				& \leq \frac{1}{|x,\theta|^{d}}.
			\end{align*}
			as $|(e^{i\theta}-1)^\gamma| 2^{j|\gamma|}\leq 1.$
			Finally, with $A>1+n+|\gamma|$
			\begin{align*}
					\sum_{2^j\geq\frac{1}{|x,\theta|}}h^\gamma_j&\lesssim
				\frac{1}{|x,\theta|^A}\sum_{2^j\geq \frac{1}{|x,\theta|}}2^{j(1+n+|\gamma|-A)}|(e^{i\theta}-1)^\gamma|\\
				&\lesssim \frac{1}{|x,\theta|^{|1+n+|\gamma||}}|(e^{i\theta}-1)^\gamma|\\
				&\lesssim \frac{1}{|x,\theta|^{1+n}}\left( |\frac{e^{i\theta}-1|}{|x,\theta|} \right)^\gamma\\
				&\lesssim \frac{1}{|x,\theta|^{d}}.
			\end{align*}
			\par $(ii)$ The argument for the decay at infinity is quite straightforward. For $N$ sufficiently large, $\nabla_\xi^N a\in L^1.$ This implies that $|x|^N|h(x)|\leq C_N,$ where $C_N$ is a constant depending on $N$. Similarly, $|x|^N|\nabla h(x)|\leq C_N'$. 
		\end{proof}
		We are ready to prove Proposition \ref{lms1}. A useful reference is the $L^p$ estimate result of Theorem $3$, Section $5$, Chapter $1$ of \cite{Ste93}, together with the Remark $(iii)$ of Section $7.4$. As all other results in Chapter $1$ of the book, the operators can be defined on homogeneous spaces in general, according to Section $1.1$.	
		\begin{proof}[Proof of Proposition \ref{lms1}]
			The idea is to adapt the operators we have seen so far to $L^p$ spaces, and then use existent singular integral theory. We write $\tilde{f}=(f-\bar{f})\langle x\rangle^\gamma$, so that $\tilde{f}\in L^p$ if and only if $f-\bar{f}\in \tilde{W}^p_{0,\gamma}$ and $||\tilde{f}||_{L^p}=||f-\bar{f}||_{\tilde{W}^p_{0,\gamma}}$. Let
			\begin{equation*}
				T(\tilde{f})=\int_{\R^n\times \m T^m}K\left((x,\theta),(y,\chi)\right)\tilde{f}(y,\chi)\,dyd\chi,
			\end{equation*}
			where $K\left((x,\theta),(y,\chi)\right)=\frac{\langle x\rangle^\gamma}{\langle y\rangle^\gamma}h\left((x,\theta)-(y,\chi)\right)$. So, by \eqref{h def}, $T(\tilde{f})=\nabla^2_{x,\theta}(u-\bar{u})\langle x\rangle^\gamma$. In this proof, as mentioned before, we concentrate on obtaining the estimates for the second derivatives of $u-\bar{u}$. The lower order derivatives result from the same procedure applied to the the corresponding operator $T$.
			\par According to \cite{Ste93}, we need to check the following properties. Given $\tilde{f}\in L^2\cap L^p$, we verify that
			\begin{enumerate}
				\item $\displaystyle ||T(\tilde{f})||_{L^2}\leq A||\tilde{f}||_{L^2}, $
				\item $\displaystyle K((x,\theta),(y,\chi))\leq \frac{A}{|(x,\theta)-(y,\chi)|^d}, $
				\item Let $c>1$ Then $$ \int_{|(x,\theta)-(y,\chi)|\geq c\rho}|K((x,\theta),(y,\chi))-K((x,\theta),(\bar{y},\bar{\chi}))|\,dxd\theta\leq C,$$
				where $|(y,\chi)-(\bar{y},\bar{\chi})|\leq\rho$ for all $(y,\chi)\in\R^n\times\m T^m$, $\rho>0$. The same must be true with the roles of $(x,\theta)$ and $(y,\chi)$ reversed.
			\end{enumerate}
			\par If this holds, then $||T(\tilde{f})||_{L^p}\lesssim ||\tilde{f}||_{L^p}$, and the result extends to all $\tilde{f}\in L^p$. 
			\medskip
			\par The first condition is immediately satisfied thanks to Lemma \ref{lm2}. 
			\par For the second condition, if $|(x,\theta)-(y,\chi)|\leq 1$, then $\frac{\langle x\rangle^\gamma}{\langle y\rangle{^\gamma}}\leq 2^{\gamma}$ and Lemma \ref{lmh} $(i)$ imply the desired result. For $|(x,\theta)-(y,\chi)|>1$, we apply Lemma \ref{lmh} $(ii)$ with $N>d+|\gamma|$. 
			\par For the last condition, we note that
			\begin{align*}
				\nabla_x K\left((x,\theta),(y,\chi)\right)&=\gamma\frac{x\langle x\rangle^{\gamma-2}}{\langle y \rangle^\gamma}h\left((x,\theta)-(y,\chi)\right)+\frac{\langle x\rangle^{\gamma}}{\langle y \rangle^\gamma}\nabla_x h\left((x,\theta)-(y,\chi)\right),\\
				\nabla_\theta K\left((x,\theta),(y,\chi)\right)&=\frac{\langle x\rangle^{\gamma}}{\langle y \rangle^\gamma}\nabla_\theta h((x,\theta)-(y,\chi)),\\
				\nabla_y K\left((x,\theta),(y,\chi)\right)&=\gamma\frac{y\langle x\rangle^{\gamma}}{\langle y \rangle^{\gamma+2}}h\left((x,\theta)-(y,\chi)\right)+\frac{\langle x\rangle^{\gamma}}{\langle y \rangle^\gamma}\nabla_y h\left((x,\theta)-(y,\chi)\right),\\
				\nabla_\chi K\left((x,\theta),(y,\chi)\right)&=\frac{\langle x\rangle^{\gamma}}{\langle y \rangle^\gamma}\nabla_\xi h((x,\theta)-(y,\chi)).\\
			\end{align*}
			We define $\Omega_1=\{\left ((x,\theta),(y,\chi) \right)\in (\R^n\times\m T^m)^2\, |\, c\rho\leq|(x,\theta)-(y,\chi)|\leq 1 \}$ and {$\Omega_2=\{ \left( (x,\theta),(y,\chi) \right)\in (\R^n\times\m T^m)^2\, | \,1, c\rho \leq |(x,\theta)-(y,\chi)|\}$}. Depending on the value of $\rho$, $\Omega_1$ might be a null set. 
			\par First, we note that for $((x,\theta),(y,\chi)) \in \Omega_1$,
			\begin{align*}
				&|K\left((x,\theta),(y,\chi)\right)-K\left((x,\theta),(\bar{y},\bar{\chi})\right)|\\
				&\quad\quad\lesssim \int_0^1\left|((y-\bar y)\cdot\nabla_y K + (\chi-\bar \chi)\cdot\nabla_\chi K)\left((x,\theta),\left(y+t(\bar y - y),\chi+t(\bar \chi - \chi)\right)\right)\right|dt\\
				&\quad\quad\lesssim \frac{|(y,\chi)-(\bar y,\bar \chi)|}{|(x,\theta)-(y,\chi)|^{d+1}}.
			\end{align*}
			Consequently,
			\begin{equation*}
				\int_{\Omega_1} |K\left((x,\theta),(y,\chi)\right)-K\left((x,\theta),(\bar{y},\bar{\chi})\right)| dxd\theta \lesssim \rho\int_{c\rho \leq |(x',\theta')|\leq 1} \frac{1}{|(x',\theta')|^{d+1}}dx'd\theta' \lesssim 1.
			\end{equation*}
			For $\Omega_2$, we use the strong decay properties of $h$ and $\nabla h$ to obtain the desired results. By symmetry, the same must be true with the roles of $(x,\theta)$ and $(y,\chi)$ reversed.
		\end{proof}
		
		\subsection{Proof of Theorem \ref{thlin}  for $P$}\label{secP}
		{We recall that $P$ is a second order homogeneous elliptic operator with constant coefficients.}  We prove the following properties for $P$.
		\begin{itemize}
			\item $P: W^p_{2,\delta,\gamma} \to W^P_{0,\delta+2,\gamma}$ is a continuous map.
			\item $P$ has a finite dimensional kernel.
			\item The a-priori estimate \eqref{estimp} holds.
			\item $P$ has closed range.
		\end{itemize}
		Let $u \in C^\infty\cap W^p_{2,\delta,\gamma}$.  Since $P$ has constant coefficients, we have $\wb{Pu} =P \wb u$, and 
		\begin{align*}
			\|P u \|_{W^p_{0,\delta+2,\gamma} }=&\int_{\m R^n} | \overline{Pu}|^p \langle x\rangle^{p(\delta+2)}\,dx
			+ \int_{\m R^n\times \m T^m} | (P u-\wb{P u})|^p \langle x\rangle^{p\gamma}dxd\theta,\\
			\leq &\sum_{0\leq |\beta|\leq 2} \left(\int_{\m R^n} |\partial^{\beta} u|^p \langle x\rangle^{p(\delta+|\beta|)}\,dx
			+ \int_{\m R^n\times \m T^m} |\partial^\beta (u- \wb u)|^p\langle x\rangle^{p\gamma}\,dxd\theta \right) \\
			\lesssim &\|u\|_{W^p_{2,\delta,\gamma}}.\\
		\end{align*}
		Consequently, the operator $P$ can be extended by density to an operator $ W^p_{2,\delta,\gamma}\to W^p_{0,\delta+2,\gamma}$.
		\medskip
		\par We study the kernel of $P$. Let  $\delta$ be such that $-\delta-\frac{n}{p} \notin \m N$  and $u\in W^p_{2,\delta,\gamma}$ such that $P u= 0$. Then $u \in C^\infty$  and, by taking the average over $\m T^m$, we see that  $P_0 \wb{u} = 0$, where $P_0$ is the operator  obtained from $P$ by keeping only the derivatives with respect to $x$. Its Fourier symbol is $P(\xi,0)$, which is invertible for $\xi \neq 0$, so $P_0$ is elliptic.  Consequently, $\bar{u}$ belongs to $Ker P_0$, which by Theorem \ref{thcb} is finite dimensional.
		Moreover,  $P (u-\wb{u})=0$, so Proposition \ref{lms1} implies $u-\wb{u}=0$. This shows that $Ker P$ is finite dimensional.
		\medskip	
		\par We prove estimate \eqref{estimp}. We have,
		\begin{align*}
			\|u\|_{W^p_{2,\delta,\gamma}}&\lesssim \|\bar{u}\|_{W^p_{2,\delta}}+\|u-\wb u \|_{W^p_{2,\delta,\gamma}}\\
			&\lesssim  \|P_0\bar{u}\|_{W^p_{0,\delta+2}} +\|\bar{u}\|_{L^p(B_R)}+\|P(u-\wb u)\|_{\tilde W^p_{2,\gamma}}\\
			&\lesssim \|\bar{u}\|_{L^p(B_R)}+ \|P u \|_{W^p_{0,\delta+2,\gamma}},
		\end{align*}
		where $R$ is chosen from Theorem \ref{thcb}.		
		
		\par From the fact that $Ker P$ is finite dimensional, and from the estimate above, we can show that $P$ has closed range. We can follow the proof of Theorem 1.10 in \cite{Bar86}). As $Ker(P)$ is finite dimensional, we can write $W^p_{2,\delta,\gamma}= Ker(P)+Z$, where $Z$ is a closed space with $Ker(P)\cap Z=\emptyset$. Moreover, from the estimate above, there exists $C$ such that for all $z\in Z$,
		\begin{equation}\label{estinter}	\|z\|_{W^p_{2,\delta,\gamma}} \leq C \|P z \|_{W^p_{0,\delta+2,\gamma}}.
		\end{equation}
		Let us consider a sequence $u_n \in W^p_{2,\delta,\gamma}$ such that $P(u_n)$ converges in $W^p_{0,\delta+2,\gamma}$. We write $u_n =w_n+z_n$ with $w_n \in Ker(P)$ and $z_n \in Z$.  From \eqref{estinter} we obtain that $z_n$ is a Cauchy sequence, so it converges to some $z$.  Consequently, $P(u_n)$ converges to $P(z)$ and the range of $P$ is closed.

		\subsection{Proof of Theorem \ref{thlin} for $L$}\label{secthlin}
		In this section, we assume that $L$ is asymptotic to $P$ in $W^p_{2,\sigma,\lambda}$.
		We extend the elliptic theory we have established in the previous section to prove Theorem \ref{thlin},  following the proof of Theorem 1.10 in Bartnik \cite{Bar86}.
		\par In order to check that $L:W^p_{2,\delta,\gamma}\to W^p_{0,\delta+2,\gamma}$, we need the multiplication properties
		$$W^p_{2,\sigma,\lambda} \times W^p_{0,\delta+2,\gamma}\subset W^p_{0,\delta+2,\gamma},$$
		$$W^p_{1,\sigma+1,\lambda} \times W^p_{1,\delta+1,\gamma} \subset W^p_{0,\delta+2,\gamma},$$
		$$W^p_{0,\sigma+2,\lambda} \times W^p_{2,\delta,\gamma} \subset W^p_{0,\delta+2,\gamma}.$$
		Thanks to Proposition \ref{multemb prop}, we can verify them, if the following conditions are satisfied
		\begin{equation}\label{conditions}
			\frac{n+m}{p}<2, \quad -\frac{n}{p}<\sigma, \quad \delta-\lambda+2<\gamma<\delta+\lambda+\frac{n}{p}.
		\end{equation}
		From now on, we assume \eqref{conditions}. The strategy involves looking at estimates on scalar functions supported in a ball or at infinity.
		We start with the following lemma.
		\begin{lm}\label{lmdec} 	
			If $u$ is supported in $\{r\geq R\}$ then 
			$$\|(L-P) u\|_{W^p_{0,\delta+2,\gamma}} \leq \ep(R)\|u\|_{W^p_{2,\delta,\gamma}},$$ with $\ep(R)\to 0$ as $R\to \infty$.
		\end{lm}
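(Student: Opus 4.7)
The plan is to exploit the decay encoded in the space $W^p_{s,\sigma,\lambda}$ (both on the zero mode via $\sigma>-n/p$ and on the oscillating part via $\lambda>0$) to show that the operator norm of $L-P$ acts in a ``small at infinity'' fashion.

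First, I would write
\[
(L-P)u=\sum_{|\alpha|=2}\bigl(a^{(\alpha)}-B^{(\alpha)}\bigr)\nabla^\alpha u+\sum_{|\alpha|<2}a^{(\alpha)}\nabla^\alpha u,
\]
so that each term is a product of a coefficient living in a weighted Sobolev space of the type $W^p_{s',\sigma',\lambda}$ (with the weights prescribed by the definition of ``asymptotic to $P$'') and a derivative of $u$ that lies in a weighted Sobolev space indexed by $(\delta,\gamma)$. The multiplication properties that are actually needed have already been identified in the first paragraph of Section \ref{secthlin}, namely
\[
W^p_{2,\sigma,\lambda}\!\times\! W^p_{0,\delta+2,\gamma}, \quad W^p_{1,\sigma+1,\lambda}\!\times\! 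W^p_{1,\delta+1,\gamma},\quad W^p_{0,\sigma+2,\lambda}\!\times\! W^p_{2,\delta,\gamma} \subset W^p_{0,\delta+2,\gamma},
\]
and each of them is verified under the hypothesis \eqref{conditions} by Proposition \ref{multemb prop}.

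Next, pick a smooth cutoff $\chi_R$ with $\chi_R\equiv 1$ on $\{r\ge R\}$, $\chi_R\equiv 0$ on $\{r\le R/2\}$ and $|\nabla^k\chi_R|\lesssim R^{-k}$. Since $u$ is supported in $\{r\ge R\}$, for every coefficient $c$ one has $c\,\nabla^\alpha u=(\chi_R c)\,\nabla^\alpha u$, so by the multiplication properties above (applied term by term),
\[
\|(L-P)u\|_{W^p_{0,\delta+2,\gamma}}\;\lesssim\;\Bigl(\sum_{|\alpha|\le 2}\|\chi_R c_\alpha\|_{W^p_{*}}\Bigr)\|u\|_{W^p_{2,\delta,\gamma}},
\]
where each $\|\cdot\|_{W^p_*}$ denotes the weighted Sobolev norm relevant to that particular multiplication. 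It then suffices to prove that $\|\chi_R c_\alpha\|_{W^p_{*}}\to 0$ as $R\to\infty$. Expanding $\nabla^k(\chi_R c_\alpha)$ via Leibniz and using the bound on $\nabla^k\chi_R$, one controls $\|\chi_R c_\alpha\|_{W^p_*}^p$ by the tail integrals $\int_{\{r\ge R/2\}}|\nabla^k c_\alpha|^p\langle x\rangle^{p(\sigma'+k)}\,dx$ (and their analogues on $\R^n\times\m T^m$ with weight $\gamma$). Since $c_\alpha$ lies in the corresponding weighted Sobolev space, these tails tend to zero as $R\to\infty$ by dominated convergence. Setting $\ep(R)$ equal to the sum of these vanishing norms gives the conclusion.

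The only mildly delicate point is to make sure that the Leibniz expansion of $\chi_R c_\alpha$ does not introduce terms whose weights fall outside the admissible range for the multiplication; the $R^{-k}$ gains on $\nabla^k\chi_R$ exactly compensate the loss in the decay weight, so no divergence arises. If one instead wants a quantitative rate, one can observe that since the conditions \eqref{conditions} are all strict, there exists $\eta>0$ such that the multiplications still hold after replacing $(\sigma,\lambda)$ by $(\sigma-\eta,\lambda-\eta)$; on $\{r\ge R\}$ one has $\langle x\rangle^{-p\eta}\le R^{-p\eta}$, so that $\|\chi_R c_\alpha\|_{W^p_{*,\sigma-\eta,\lambda-\eta}}\lesssim R^{-\eta}\|c_\alpha\|_{W^p_{*,\sigma,\lambda}}$, yielding $\ep(R)\lesssim R^{-\eta}$. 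I would keep the qualitative statement, which is all that is subsequently used in the adaptation of Bartnik's argument.
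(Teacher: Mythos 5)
Your proposal is correct and follows essentially the same strategy as the paper: exploit the strict inequalities in \eqref{conditions} to open up a small gap $\tau>0$ (your $\eta$) in the weights used for the multiplication estimates, then use that the coefficients of $L-P$ lie in weighted spaces so that their norm restricted to $\{r\ge R\}$ (the paper writes this as a restricted norm, you implement it more carefully with a smooth cutoff and Leibniz) is bounded by $R^{-\tau}$ times the full norm. The smooth-cutoff version you give is a slightly more careful rendering of the same estimate, and both arrive at the same quantitative rate.
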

		\begin{proof}
			
			We can find $\tau>0$ such that
			$$\|(L-P) u\|_{W^p_{0,\delta+2,\gamma}}\lesssim (\sum_{|\alpha|\leq 2}\|a^{(\alpha)}\|_{W^p_{|\alpha|,\sigma+2-|\alpha|-\tau,\lambda-\tau}})\|u\|_{W^p_{2,\delta,\gamma}},$$
			so if $u$ is supported in $\{r\geq R\}$ 
			\begin{align*}\|(L-P) u\|_{W^p_{0,\delta+2,\gamma}}&\lesssim (\sum_{|\alpha|\leq 2}\|a^{(\alpha)}\|_{W^p_{|\alpha|,\sigma+2-|\alpha|-\tau,\lambda-\tau}(\{r\geq R\})})\|u\|_{W^p_{2,\delta,\gamma}}\\
				&\lesssim R^{-\tau}(\sum_{|\alpha|\leq 2}\|a^{(\alpha)}\|_{W^p_{|\alpha|,\sigma+2-|\alpha|,\lambda}})\|u\|_{W^p_{2,\delta,\gamma}}.
			\end{align*}
		\end{proof}
		
		We are ready to finish the Proof of Theorem \ref{thlin}.
		\begin{proof}[Proof of Theorem \ref{thlin}]. We write $u=u_0+u_\infty$ where $u_0=\chi(\frac{r}{R})u$, and $u_\infty= \left(1-\chi(\frac{r}{R})\right)u,$ with $\chi$ supported in $B(0,2)$ with $\chi = 1$ on $B(0,1)$. We have
			\begin{align*}
				\|u_\infty\|_{W^p_{2,\delta,\gamma}}\lesssim  &\|\wb u_\infty\|_{W^p_{2,\delta}}+\|u_\infty-\wb u_\infty\|_{W^p_{2,\delta,\gamma}}\\
				\lesssim &\|P_0 \wb u_\infty\|_{W^p_{0,\delta+2}}+\|P (u_\infty-\wb u_\infty)\|_{ W^p_{0,\delta,\gamma}}\\
				\lesssim &\|P u_\infty\|_{W^p_{0,\delta+2,\gamma}}.
			\end{align*}
			{where we used Theorem 1.7 in Bartnik \cite{Bar86} to write \footnote{{This result uses the fact that $\Delta$ is an isomorphism on homogeneous weighted Sobolev spaces, where the weight $(1+r^2)^\frac{1}{2}$ is replaced by $r$. As in \cite{Bar86}, the result can easily be generalized to elliptic operators with constant coefficients.}}
				$\|\wb u_\infty\|_{W^p_{2,\delta}(\m R^n)}\leq \|P_0 \wb u_\infty\|_{W^p_{0,\delta+2}(\R^n)}$  and Lemma \ref{lms1} for the non zero modes.}
			Consequently,
			$$\|u_\infty\|_{W^p_{2,\delta,\gamma}} \lesssim \|L u_\infty\|_{W^p_{0,\delta+2,\gamma}}+ C\ep(R)\|u_\infty\|_{W^p_{2,\delta,\gamma}}$$
			with $\ep(R)\to 0$ as $R \to \infty$. We choose $R$ big enough to write
			$$\|u_\infty\|_{W^p_{2,\delta,\gamma}} \lesssim \|L u_\infty\|_{W^p_{0,\delta+2,\gamma}}.$$
		We can write
			\begin{align*}
				\|L u_\infty\|_{W^p_{0,\delta+2,\gamma}}
				\lesssim &\|(1-\chi(\frac{r}{R}))L u + [L,(1-\chi(\frac{r}{R}))]u\|_{W^p_{0,\delta+2,\gamma}}\\
				\lesssim & \|L u\|_{W^p_{0,\delta+2,\gamma}}+C(R)\|u\|_{W^{1,p}(A_R)},
			\end{align*}
			where $A_R$ is the annulus $R\leq r \leq 2R$. Moreover, the interior estimate given by Theorem 8.8 in Gilbarg Trudinger \cite{giltru84} gives
			$$\|u_0\|_{W^{2,p}(B_{2R})} \lesssim \|L u_0 \|_{L^p(B_{3R})}+\|u_0\|_{W^{1,p}(B_{3R})} \lesssim \|L u\|_{L^p(B_{3R})}+\|u\|_{W^{1,p}(B_{3R})}.$$
			By interpolation, on the usual Sobolev spaces $W^{k,p}(B_{3R})$ we obtain
			$$\|u\|_{W^p_{2,\delta,\gamma}} \lesssim C(R)\|L u\|_{W^p_{0,\delta+2,\gamma}}+\|u\|_{L^p(B_{3R})}.$$
			This proves \eqref{estimp}	and yields that the unit ball of $ker L$ is compact in $W^{p}_{2,p,\delta}$, which implies that $ker L$ is finite dimensional. 
			The argument to show that the image is closed is the same as for $P$.
		\end{proof}
		

		\subsection{The injectivity of the Laplacian and conformal Laplacian}\label{injsec}
		\subsubsection{A maximum principle for $\Delta_g$}
		In the case of solutions which decay at infinity, the injectivity of $\Delta_g$ can be obtained from the maximum principle. The following result is a direct consequence of the maximum principle stated in Theorem 8.19 of \cite{giltru84} in the compact case.
		\begin{lm}\label{positivity cor} Let $g$ be a Riemannian metric on $M$ with bouded coefficients and $\phi \in W^{1,2}_{loc}$ be such that
		\begin{equation*}
			\Delta_g \varphi + h\varphi \geq 0,
		\end{equation*}
		where $h$ is a non negative bounded scalar field. Suppose moreover that there exists $A\in\R$ such that $u\to A$ at infinity. 
		\begin{enumerate}
			\item If $A>0$, then there exists $0<\varepsilon\leq A$ such that $\varphi\geq \varepsilon$ on the manifold.
			\item If $A=0$, then $\varphi\geq 0$.
		\end{enumerate}
	\end{lm}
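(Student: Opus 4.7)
The idea is to reduce to the classical weak and strong maximum principles on compact manifolds with boundary, exhausting $M=\m R^n\times\m T^m$ by the compact subdomains $\Omega_R:=\overline{B_R}\times\m T^m$ and using the prescribed asymptotic behavior of $\varphi$ at infinity to control the boundary values on $\partial\Omega_R=\{|x|=R\}\times\m T^m$. In local coordinates, using the paper's sign convention in which $\Delta_g$ has positive eigenvalues, the inequality $\Delta_g\varphi+h\varphi\ge 0$ becomes, after multiplication by $\sqrt{|g|}>0$, the divergence-form inequality $\partial_i(\sqrt{|g|}\,g^{ij}\partial_j\varphi)-\sqrt{|g|}\,h\,\varphi\le 0$. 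On each $\Omega_R$ this is uniformly elliptic, with bounded coefficients and zero-order term $\le 0$, so the weak minimum principle of Theorem~8.1 in \cite{giltru84} (extended to compact Riemannian manifolds with boundary by a partition-of-unity argument) gives
\begin{equation*}
\inf_{\Omega_R}\varphi\;\ge\;\min\bigl(0,\,\inf_{\partial\Omega_R}\varphi\bigr).
\end{equation*}

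For case~(2), I fix $\varepsilon>0$ and choose $R$ so large that $\varphi>-\varepsilon$ on $\{|x|\ge R\}\times\m T^m$, which is possible since $\varphi\to 0$ at infinity. The display above then yields $\varphi\ge -\varepsilon$ on $\Omega_R$, hence on all of $M$; sending $\varepsilon\to 0$ proves $\varphi\ge 0$. For case~(1), the same argument, with $R$ chosen so that $\varphi>A/2$ outside $\Omega_R$, first gives $\varphi\ge 0$ on all of $M$. To upgrade to a strict positive lower bound, I would invoke elliptic regularity (De~Giorgi--Nash--Moser) to ensure $\varphi$ is continuous, then apply the strong minimum principle for nonnegative weak supersolutions with $c\le 0$: if $\varphi(x_0)=0$ at any point $x_0$, it would force $\varphi\equiv 0$ on the connected manifold $M$, contradicting $\varphi\to A>0$. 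Hence $\varphi>0$ pointwise, and by continuity and compactness $m:=\inf_{\Omega_R}\varphi>0$; setting $\varepsilon:=\min(m,A/2)>0$ yields the required uniform lower bound, with $\varepsilon\le A$ since $\inf_M\varphi\le A$.

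The main obstacle is not mathematical but one of bookkeeping: one must verify that the weak and strong maximum principles of \cite{giltru84}, stated for bounded open subsets of $\m R^d$, transfer faithfully to the compact manifolds with boundary $\Omega_R$ equipped with a metric having merely bounded coefficients. This reduces to a standard local chart argument, provided $g$ is uniformly elliptic on each $\Omega_R$, which is implicit in the boundedness of its coefficients together with positive definiteness of Riemannian metrics. Once this is granted the proof is entirely standard, which is exactly why the authors record the lemma as a direct consequence of the compact case.
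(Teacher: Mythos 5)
Your proof is correct and follows essentially the same strategy as the paper's: reduce to a compact region using the asymptotic behavior of $\varphi$ at infinity, then invoke the Gilbarg--Trudinger maximum principle. The paper's version passes straight to the strong maximum principle (Theorem~8.19) applied at the global infimum, while yours first uses the weak maximum principle on the exhausting domains $\Omega_R$ to get nonnegativity and then upgrades via the strong one, but the core ideas and the cited results are the same.
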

	\begin{proof}
		Let $0<\varepsilon_0<A$. In a neighborhood of infinity, we have that $\phi \geq A-\varepsilon0$, consequently, if the property we want to prove were not true, then there would exist a compact $K$ such that $\inf_K \phi = \inf_{M} \phi \leq 0$: by the maximum principle (Theorem 8.19 in \cite{giltru84}), this would imply that $\phi$ is constant, equal to $A$, which is a contradiction.
		\par Suppose that $\phi$ takes a negative value $\lambda<0$. Let $0<\varepsilon_0<|\lambda|$. In a neighborhood of infinity we have $-\ep_0 \leq \phi$. Consequently there would exist a compact $K$ such that $\inf_K \phi = \inf_{M} \phi <0$ which again implies that $\phi$ is a constant, equal to $0$, which is again a contradiction.
	\end{proof}
		From this Lemma, we obtain the following corollary, which is our desired injectivity result for $\Delta_g$.
		\begin{cor}\label{corinj}
			Let $g-\zeta \in W^p_{2,\sigma,\lambda}$. Let us assume that $(\sigma,\lambda,\delta,\gamma)$ satisfy the conditions \eqref{conditions}, together with the hypothesis
			\begin{equation}\label{dec}0<\gamma, \quad -\frac{n}{p}<\delta.
			\end{equation}
			Let $h\in W^p_{0,\delta+2,\gamma}$ be a non negative bounded function.
			Then $\Delta_g +h : W^p_{2,\delta,\gamma} \to W^p_{0,\delta+2,\gamma}$ is injective.
		\end{cor}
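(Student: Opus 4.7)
The plan is to reduce injectivity to the maximum principle of Lemma \ref{positivity cor}, leveraging the weighted Sobolev--H\"older embeddings of Proposition \ref{multemb prop} to force $u$ to vanish at infinity. Suppose $u \in W^p_{2,\delta,\gamma}$ satisfies $(\Delta_g + h)u = 0$; the aim is to conclude $u \equiv 0$.

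The first step is to show that $u(x,\theta) \to 0$ as $|x| \to \infty$, uniformly in $\theta$. Splitting $u = \bar u + (u - \bar u)$, the condition $(n+m)/p < 2$ from \eqref{conditions} lets us apply the embedding $\tilde{W}^p_{2,\gamma} \subset \tilde{\mathcal{C}}^0_\gamma$, so $|u - \bar u| \lesssim \langle x\rangle^{-\gamma}$, which decays since $\gamma > 0$ by \eqref{dec}. For the average, the hypothesis $\delta > -n/p$ allows a choice of $\rho_1 \in (0,\, \delta + n/p)$, and the embedding $W^p_{2,\delta}(\R^n) \subset \mathcal{C}^0_{\rho_1}$ gives $|\bar u(x)| \lesssim \langle x\rangle^{-\rho_1}$. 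Together these produce a uniform limit $0$ at infinity.

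With the decay at infinity in hand, the second step is to invoke Lemma \ref{positivity cor}. The metric $g$ has bounded coefficients because $g - \zeta \in W^p_{2,\sigma,\lambda}$ embeds into a bounded H\"older space under \eqref{conditions}; $h$ is non-negative and bounded by assumption; and $u$ lies in $W^{1,2}_{\mathrm{loc}}$ since $W^p_{2,\delta,\gamma} \subset W^{2,p}_{\mathrm{loc}}$. Applying the lemma with $A = 0$ to $u$, which satisfies $(\Delta_g + h)u = 0 \geq 0$, yields $u \geq 0$. Applying it to $-u$, which also satisfies $(\Delta_g + h)(-u) \geq 0$ and tends to $0$ at infinity, yields $u \leq 0$. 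Hence $u \equiv 0$, giving injectivity.

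The only genuine analytical content is the decay at infinity, which crucially uses both parts of \eqref{dec}: if either $\delta$ or $\gamma$ were too negative, the embeddings would fail to produce a function tending to $0$ and the maximum principle in the form stated would not apply. Everything else is a direct, essentially mechanical, application of results already established earlier in the section.
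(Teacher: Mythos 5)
Your proposal is correct and follows the same route as the paper: show that $u$ decays to $0$ at infinity and then invoke the maximum principle of Lemma \ref{positivity cor} for $u$ and $-u$. The paper states the decay at infinity without spelling out the embedding argument; your justification via the splitting $u = \bar u + (u - \bar u)$ and the embeddings of Proposition \ref{multemb prop} is exactly the intended content of that step.
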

		\begin{proof}
			Let $u\in W^p_{2,\delta,\gamma}$ be such that $\Delta_g u +h u=0$. The hypothesis \eqref{dec} implies that $u \to 0$ at infinity. Therefore, Lemma \eqref{positivity cor} implies that $u\geq 0$. By considering $-u$, we obtain $u=0$, so $\Delta_g +h$ is injective.
		\end{proof}

		\subsubsection{Conformal Killing Vector Fields That Vanish at Infinity.} 		
		We recall that we are interested in metrics $g_{ij}-\zeta_{ij}\in W^p_{2,\sigma,\lambda}$. We study the injectivity of the conformal Laplacian
		$$\overrightarrow{\Delta}_g : W^p_{2,\delta,\gamma} \to W^p_{0,\delta+2,\gamma}.$$
		We always assume that $\delta,\gamma,\sigma,\lambda$ satisfy the set of hypothesis \eqref{conditions} that we recall
		$$		  \frac{n+m}{p}<2, \quad -\frac{n}{p}<\sigma, \quad \delta-\lambda+2<\gamma<\delta+\lambda+\frac{n}{p}.$$
		
		\par In order to satisfyingly solve the momentum constraint, we first want to check that $X\equiv 0$ is the only conformal Killing vector field in $W^p_{2,\delta,\gamma}$. This is similar to the classical case of the asymptotically Euclidean manifold. 
		We start with a general property of conformal Killing fields, which can be found in \cite{christodoulou_omurchadha}. We give the proof for the sake of completness.
		\begin{lm}\label{third deriv}
			Let $X$ be a conformal Killing vector field in any manifold $M$ of dimension $n>2$. Then, the following results hold true in local coordinates,
			\begin{equation*}
				\nabla_\lambda\nabla_\gamma\nabla_\alpha X_\beta=R X+\nabla(RX),
			\end{equation*}
			where $RX$ is an operator comprised of linear terms of the Riemann curvature tensor, and $d=n+m$ is the dimension of the manifold.
		\end{lm}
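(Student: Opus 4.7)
The plan is a standard prolongation argument for the conformal Killing equation. Writing $\phi = \frac{1}{d}\nabla_\mu X^\mu$, the CKV equation reads $\nabla_\alpha X_\beta + \nabla_\beta X_\alpha = 2\phi\, g_{\alpha\beta}$, and the goal is to express third covariant derivatives of $X$ entirely in terms of $X$, $\nabla X$ and the curvature.

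First, I would differentiate this equation in three cyclic permutations of $(\gamma,\alpha,\beta)$ and take the alternating sum that produces the classical Killing-vector identity, the only new terms coming from the scalar $\phi$. Combining with the Ricci identity $[\nabla_\mu,\nabla_\nu]X_\sigma = -R^\rho{}_{\sigma\mu\nu}X_\rho$, this yields a formula of the schematic form
\begin{equation*}
\nabla_\gamma\nabla_\alpha X_\beta \;=\; (R\cdot X)_{\gamma\alpha\beta} \;+\; g_{\alpha\beta}\nabla_\gamma\phi \;+\; g_{\beta\gamma}\nabla_\alpha\phi \;-\; g_{\alpha\gamma}\nabla_\beta\phi,
\end{equation*}
where $(R\cdot X)_{\gamma\alpha\beta}$ is an explicit linear combination of Riemann components contracted with $X$.

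Second, I would extract an explicit formula for $\nabla_\gamma\phi$ in terms of curvature and $X$ alone. Contracting the above identity with $g^{\alpha\beta}$ (using $g^{\alpha\beta}\nabla_\gamma\nabla_\alpha X_\beta = d\,\nabla_\gamma\phi$) produces, after using the Bianchi identities to re-express traces of the Riemann tensor, an algebraic relation of the form $(d-2)\nabla_\gamma\phi = (\mathrm{Ric}\cdot X)_\gamma$. The hypothesis $d>2$ is essential here to invert the numerical factor $d-2$; this is the step which genuinely distinguishes the CKV situation from the Killing case, where $\phi$ vanishes identically.

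Third, differentiating the Step 1 identity once more and applying the product rule produces
\begin{equation*}
\nabla_\lambda\nabla_\gamma\nabla_\alpha X_\beta \;=\; \nabla_\lambda(R\cdot X)_{\gamma\alpha\beta} \;+\; g_{\alpha\beta}\nabla_\lambda\nabla_\gamma\phi \;+\; g_{\beta\gamma}\nabla_\lambda\nabla_\alpha\phi \;-\; g_{\alpha\gamma}\nabla_\lambda\nabla_\beta\phi.
\end{equation*}
The first term is of the form $\nabla(RX)$ by definition. For the remaining $\nabla^2\phi$ terms, I would substitute the formula from Step 2 and differentiate; by the product rule this produces exclusively terms of type $(\nabla R)\cdot X + R\cdot\nabla X$, which is again subsumed by $\nabla(RX)$. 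Collecting everything gives the stated identity. The main obstacle is the bookkeeping in Step 2 — carefully tracing the Step 1 identity and invoking the Bianchi identities to obtain a clean algebraic expression for $\nabla\phi$ — after which Step 3 reduces to the product rule.
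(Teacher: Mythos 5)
Your Step 1 is correct: it is the standard cyclic-permutation identity
\begin{equation*}
\nabla_\gamma\nabla_\alpha X_\beta = g_{\alpha\beta}\nabla_\gamma\phi + g_{\beta\gamma}\nabla_\alpha\phi - g_{\alpha\gamma}\nabla_\beta\phi + (R\cdot X)_{\gamma\alpha\beta},
\end{equation*}
and Step 3 would indeed follow from Step 2 by the product rule. The problem is Step 2. The claim that contraction produces an \emph{algebraic} relation $(d-2)\nabla_\gamma\phi = (\mathrm{Ric}\cdot X)_\gamma$ is false: $\nabla\phi$ is a genuinely free prolongation variable for the conformal Killing system (this is precisely why the conformal algebra is $\tfrac{(d+1)(d+2)}{2}$-dimensional rather than $\tfrac{d(d+1)}{2}$-dimensional as in the Killing case). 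A concrete counterexample: for the special conformal field $X^\mu = 2(b\cdot x)x^\mu - |x|^2 b^\mu$ on flat $\R^d$ one computes $\phi = 2(b\cdot x)$, so $\nabla_\mu\phi = 2b_\mu$ is a nonzero constant, while $\mathrm{Ric}\equiv 0$. If you actually carry out the contraction of the Step 1 identity, $g^{\alpha\beta}$ gives a trivial identity, while $g^{\gamma\alpha}$ gives
\begin{equation*}
(d-2)\nabla_\beta\phi = -\Delta X_\beta + (R\cdot X)_\beta,
\end{equation*}
which still involves $\Delta X_\beta$, i.e.\ second derivatives of $X$, not just $X$ and curvature. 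No Bianchi manipulation will remove that term.

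The correct repair (and this is what the paper does, phrased in terms of $\mathrm{div}\,X = d\phi$ rather than $\phi$) is to aim one order higher: do not try to express $\nabla\phi$ algebraically, but show that $\nabla_\lambda\nabla_\gamma\phi$ is of the form $R X + \nabla(RX)$. One differentiates the relation above, commutes $\nabla_\lambda$ past $\Delta$, and then feeds the Step 1 identity back into $\Delta\nabla X$ to produce $g_{\lambda\gamma}\Delta\phi$ plus curvature terms; a final contraction with $g^{\lambda\gamma}$ then isolates $\Delta\phi$ and shows it equals $\nabla(RX)+R\nabla X$ up to a nonvanishing numerical factor (this is where $d>2$, in fact $d>1$, is used, not at your Step 2). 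Once $\Delta\phi$ is controlled, $\nabla^2\phi$ is controlled, and your Step 3 goes through. So the architecture of your argument is right, but the key lemma you isolated is for $\nabla^2\phi$, not $\nabla\phi$, and establishing it requires a second round of manipulation that your proposal does not carry out.
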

		\begin{proof}
			We recall that a conformal Killing vector field $X$ verifies
			\begin{equation}\label{conformal killing}
				\nabla_\alpha X_\beta +\nabla_\beta X_\alpha=\frac{2}{d}div_g X g_{\alpha\beta}
			\end{equation}
			and that, by the definition of the Riemannian curvature tensor, we commute two derivatives of $X$ to get
			\begin{equation}\label{riem}
				\nabla_\mu \nabla_\nu X=\nabla_\nu\nabla_\mu X + RX.
			\end{equation}
			We use the notation $RX$ to designate (any) linear operator in the Riemannian tensor. We do not specify the exact form of $RX$ to lighten the proof. The operator $RX$ is not always the same in every line.
			If we derive \eqref{conformal killing} by $\nabla^\alpha$, we obtain
			\begin{equation*}
				\nabla^\alpha\nabla_\alpha X_\beta +\nabla^\alpha\nabla_\beta X_\alpha-\frac{2}{d}\nabla^\alpha div_g X g_{\alpha\beta}=0,
			\end{equation*}
			and by applying \eqref{riem}, it follows that
			\begin{equation}\label{d div X}
				\left(\frac{2}{d}-1\right)\nabla_\beta div X=\nabla^\alpha\nabla_\alpha X_\beta +RX.
			\end{equation}
			If we derive again by $\nabla_\lambda$, we see that
			\begin{align*}
				\left(\frac{2}{d}-1\right)\nabla_\lambda\nabla_\beta div X & = \nabla_\lambda\nabla^\alpha\nabla_\alpha X_\beta+\nabla(RX) \\
				& =   \nabla^\alpha\nabla_\alpha(-\nabla_\beta X_\lambda+\frac{2}{d}g_{\lambda\beta}div X) +\nabla(RX)+R\nabla X \\
				& = -\nabla_\beta \left(\left(\frac{2}{d}-1\right) \nabla_\lambda div X\right)+\frac{2}{d}g_{\lambda \beta}\Delta div X+\nabla(RX)+R\nabla X \\
				& = -\left(\left(\frac{2}{d}-1\right) \nabla_\lambda \nabla_\beta div X\right)+\frac{2}{d}g_{\lambda \beta}\Delta div X+\nabla(RX)+R\nabla X \\
			\end{align*}
			Contracting by $g^{\lambda \beta}$, we obtain
			$$2\left(\frac{2}{d}-1\right)\Delta dix X = 2\Delta dix X +\nabla(RX)+R\nabla X$$
			which yields
			\begin{equation}\label{eqdiv}4\left(\frac{1}{d}-1\right)\Delta dix X=\nabla(RX)+R\nabla X.
			\end{equation}
			If we derive \eqref{conformal killing} by $\nabla_\gamma$ and again apply \eqref{riem} twice, we obtain
			\begin{align}\label{ddk}
				\nabla_\gamma\nabla_\alpha X_\beta & =  - \nabla_\beta\nabla_\gamma X_\alpha +\frac{2}{d}g_{\alpha\beta}\nabla_\gamma div X + RX\\
				& =  - \nabla_\beta\left( {-}\nabla_\alp X_\gamma {+}\frac{2}{d}g_{\alpha\gamma}div X \right) +\frac{2}{d}g_{\alpha\beta}\nabla_\gamma div X + RX\\
				& =  -\nabla_\alpha\nabla_\gamma X_\beta +\frac{2}{d}g_{\gamma\beta}\nabla_\alpha div X-\frac{2}{d}g_{\alpha\gamma} \nabla_\beta div X+\frac{2}{d}g_{\alpha\beta}\nabla_\gamma div X + RK.
			\end{align}
			so that altogether we obtain
			\begin{equation*}
				\nabla_\lambda\nabla_\gamma\nabla_\alpha X_\beta=\frac{\Delta_g  div X}{d^2\left( \frac{2}{d}-1\right)}(g_{\alpha\beta}g_{\lambda\gamma}-g_{\alpha\gamma}g_{\lambda\beta}+g_{\gamma\beta}g_{\lambda\alpha})+RX+\nabla(RX).
			\end{equation*}
			Applying \eqref{eqdiv} to the above equation gives us the result.
		\end{proof}
		\begin{rk} Note that, in the case of a flat metric, the aforementioned terms $RX$ and $\nabla(RX)$ vanish.
		\end{rk}
		\par First, we look at the manifold $(\R^n\times \m T^m,\zeta)$. 
		\begin{lm}
			Let $X\in W^p_{0,\delta,\gamma}$ be a nontrivial conformal Killing vector field on $(\R^n\times \m T^m,\zeta)$ with $\delta>-\frac{n}{p}$ and $\gamma>0$. Then $X\equiv 0$.
		\end{lm}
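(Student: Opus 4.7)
The plan is to combine the rigidity furnished by Lemma \ref{third deriv} with the periodicity of the torus factor and the decay hypothesis. Since $\zeta$ is flat, every curvature term appearing in Lemma \ref{third deriv} vanishes, the divergence identity \eqref{eqdiv} collapses to $\Delta_\zeta \,\mathrm{div}\,X = 0$, and the formula for $\nabla_\lambda\nabla_\gamma\nabla_\alpha X_\beta$ reduces to $\nabla^3 X \equiv 0$. To apply Lemma \ref{third deriv} to an $X\in W^p_{0,\delta,\gamma}$ we first need a regularity upgrade: the conformal Killing equation is overdetermined elliptic, so $X$ is in fact smooth. Since $\nabla=\partial$ in the flat coordinate frame, each component of $X$, lifted to the universal cover $\m R^n\times\m R^m$, is a polynomial of degree at most two in $(x,\theta)$. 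Descending back to $\m R^n\times\m T^m$, periodicity in each $\theta_j$ kills every monomial containing a positive power of $\theta$, so $X$ depends only on $x$; in particular $X=\bar X$ and the non-zero mode is already trivial.

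Next, decompose $\bar X=(Y,Z)$ into its $\m R^n$-component $Y$ and its $\m T^m$-component $Z$, both functions of $x$ alone. Writing the conformal Killing equation component by component and using $\partial_\theta\bar X=0$: the diagonal $(\theta_{j_1},\theta_{j_2})$ equations force $\partial_i Y^i=0$, the mixed $(i,\theta_j)$ equations force $\partial_i Z^{\theta_j}=0$, and the $(i_1,i_2)$ equations then reduce to the genuine Killing equation $\partial_{i_1}Y^{i_2}+\partial_{i_2}Y^{i_1}=0$. Hence $Z$ is a constant vector in the torus directions and $Y$ is a Euclidean Killing field of the form $Y^i=a^i+\omega^i{}_j x^j$ with $\omega$ antisymmetric.

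Finally, the decay hypothesis $\delta>-\tfrac{n}{p}$ rules out all such $(Y,Z)$. Since $\bar X\in W^p_{0,\delta}(\m R^n)$ we need $\int_{\m R^n}|\bar X|^p\langle x\rangle^{p\delta}\,dx<\infty$; but a nonzero constant contributes an integrand $\sim\langle x\rangle^{p\delta}$, non-integrable precisely when $p\delta\geq -n$, and a nonzero linear part contributes the even larger $\sim\langle x\rangle^{p(1+\delta)}$. Both are incompatible with $\delta>-\tfrac{n}{p}$, so $a=0$, $\omega=0$, $Z=0$, and $X\equiv 0$. The only genuinely non-routine step is the regularity upgrade combined with the polynomial-plus-periodicity reduction; everything else is algebra and weight-counting.
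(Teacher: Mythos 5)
Your proof is correct and follows the same route as the paper: apply Lemma \ref{third deriv} in the flat case to deduce $\nabla^3_\zeta X = 0$, hence that the components are polynomials of degree at most two, then use the weighted decay hypothesis to conclude they all vanish. The intermediate steps you supply (the elliptic regularity upgrade, lifting to the universal cover to exploit periodicity in $\theta$, and the explicit $(Y,Z)$ decomposition into Euclidean Killing and constant torus parts) spell out details the paper treats as implicit but do not constitute a different strategy.
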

		\begin{proof}
			Given that the third derivatives $\nabla^3_\zeta$ is null, the coefficients of the vector field $X$ must all be polynomials in $x$ and $\theta$ of degree at most $2$. However, as $X$ also decays to zero at infinity, this implies that all the coefficients are null.  
		\end{proof}
		We are ready to move on to studying conformal Killing fields on the asymptotically flat manifold $(M,g)$. We denote $E_1=(\R^n\times \m T^m)-(\overline{B_1\times \m T^m})$. {In this part, we follow closely \cite{Max04}}. In order to show that $X$ vanishes in $\R^n\times \m T^m$, we begin by analyzing its behaviour on a neighbourhood close to infinity, and then in an interior domain.
		\begin{lm}\label{lminjext}
			{	Let $g$ be such that $g_{ij}-\zeta_{ij}\in W^p_{2,\sigma,\lambda}$.
				Suppose $X$ is a conformal Killing vector field in $W^p_{0,\delta,\gamma}$.
				In addition to the set of hypothesis \eqref{conditions} we assume
				\begin{equation}\label{condinj}
					\lambda +\frac{n+m}{p}>2, \quad \delta>-\frac{n}{p}, \quad \gamma>0.
				\end{equation}
				Then $X$ vanishes in a neighbourhood of infinity.}
		\end{lm}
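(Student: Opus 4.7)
The plan is to combine the rigidity from Lemma \ref{third deriv} with an elliptic bootstrap: first derive an elliptic equation for $X$ from the conformal Killing condition, then use the invertibility theory for $\overrightarrow{\Delta}_\zeta$ to iteratively improve the decay of $X$ at infinity, and finally exploit the fact that $\nabla^3 X$ is controlled by $X$ itself to conclude that $X$ and its relevant derivatives vanishing at infinity force $X\equiv 0$ near infinity.

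First, since $X$ is conformal Killing for $g$, we have $(\mathcal{L}_g X)_{\mu\nu}=0$ and hence, taking the divergence, $\overrightarrow{\Delta}_g X = 0$. Standard interior elliptic regularity (as used in the proof of Theorem \ref{thlin}) upgrades $X\in W^p_{0,\delta,\gamma}$ to $X\in W^p_{2,\delta,\gamma}$ on any annulus near infinity, so the equation is valid in a strong sense. Writing $\overrightarrow{\Delta}_g X = \overrightarrow{\Delta}_\zeta X + Q(X)$, where $Q$ is a second order operator whose coefficients involve $g-\zeta$ and its derivatives, we get
\begin{equation*}
\overrightarrow{\Delta}_\zeta X = -Q(X).
\end{equation*}

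Next, I would cut off $X$ near infinity by $\chi\bigl(\tfrac{r}{R}\bigr)$ (with $\chi$ vanishing near the origin), absorb the commutator into a compactly supported error, and estimate the right hand side using the multiplication property of Proposition \ref{multemb prop}. Since $g-\zeta\in W^p_{2,\sigma,\lambda}$ with $\sigma>-n/p$ and $\lambda>2-(n+m)/p$, the map $X\mapsto Q(X)$ gains a positive amount $\tau>0$ of decay in both the $\delta$ and $\gamma$ weights. Applying the isomorphism theory of Theorem \ref{thlin} for $\overrightarrow{\Delta}_\zeta=P$ on $\tilde{X}:= \chi(r/R) X$ then places $X$ in $W^p_{2,\delta+\tau,\gamma+\tau}$ on the complement of a larger compact set. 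Iterating this step (choosing $R$ larger and avoiding the exceptional weights $-\delta-n/p\in\mathbb N$ at each stage so that Theorem \ref{thlin} applies), we conclude that $X$ decays faster than any polynomial at infinity, together with its first two derivatives.

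Finally, I invoke Lemma \ref{third deriv}: the third derivative $\nabla^3 X$ is given schematically by $RX + \nabla(RX)$, a zeroth and first order expression in $X$ with coefficients built from the Riemann tensor of $g$, which is bounded by the hypothesis on $g-\zeta$. Viewing the conformal Killing condition together with this identity as a first order ODE system in the unknown $(X, \nabla X, \nabla^2 X)$ along rays going to infinity in $\mathbb{R}^n\times\mathbb{T}^m$, the bootstrap result of the previous step says that the Cauchy data of this ODE vanish at infinity. Standard ODE uniqueness then propagates this back along each ray, forcing $X\equiv 0$ on a neighbourhood of infinity.

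The main obstacle is the bookkeeping in the bootstrap: at each iteration one must verify that the upgraded weights $(\delta+k\tau,\gamma+k\tau)$ still satisfy the coupling $\delta-\lambda+2<\gamma<\delta+\lambda+n/p$ required by Theorem \ref{thlin}, and one must step over (or perturb around) the exceptional values $-\delta-n/p\in\mathbb{N}$. A secondary subtlety is that the rigidity argument based on Lemma \ref{third deriv} must handle the torus directions carefully, since the $\theta$ variables are compact and the polynomial rigidity argument from the Euclidean case needs to be replaced by the ODE-along-rays reasoning outlined above.
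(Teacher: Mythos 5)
Your approach is genuinely different from the paper's, which instead uses a rescaling/compactness argument: it dilates the metric and the vector field by $2^m$, shows the dilated metrics converge to $\zeta$, normalizes the dilated vector fields, and derives a contradiction from the a priori estimate \eqref{estimp} together with the fact that any limit must be a conformal Killing field of $\zeta$ that decays, hence zero. That technique avoids the difficulties your bootstrap runs into, and I think your version has two essential gaps.

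\textbf{The bootstrap does not yield super-polynomial decay.} Theorem \ref{thinv} gives invertibility of $\overrightarrow{\Delta}_\zeta : W^p_{2,\delta,\gamma}\to W^p_{0,\delta+2,\gamma}$ only for $-\frac{n}{p}<\delta<-\frac{n}{p}+n-2$. Once you reach the upper end of this interval the operator stops being surjective (this is the same obstruction as for the flat Laplacian on $\R^n$: there are no decaying solutions of $\Delta u = f$ with $u$ decaying faster than the fundamental solution unless $f$ satisfies moment conditions). So your iteration saturates at a decay rate comparable to $r^{-(n-2)}$; it cannot continue past that regardless of how you step around the exceptional weights $-\delta-\frac{n}{p}\in\m N$. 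The statement that $X$ ``decays faster than any polynomial'' is therefore not reachable by this mechanism.

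\textbf{Even super-polynomial decay would not give vanishing on a neighbourhood of infinity, and the ODE-along-rays step is not valid.} A function like $e^{-r}$ decays faster than any polynomial without vanishing anywhere, so polynomial bootstrap alone cannot close the argument; what is needed here is a unique-continuation-from-infinity statement, which requires Carleman-type estimates rather than decay estimates. Moreover, the identity of Lemma \ref{third deriv}, $\nabla^3 X = RX+\nabla(RX)$, is a system of PDEs coupling derivatives in all $n+m$ directions. Restricting it to a ray controls only the radial component of $\nabla^3X$; the transverse components of $\nabla X$ and $\nabla^2 X$ enter the right-hand side and are not determined along the ray, so there is no first-order ODE system in $(X,\nabla X,\nabla^2 X)$ whose Cauchy data is given at a single endpoint. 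Both the reduction to an ODE and the application of ``standard ODE uniqueness from infinity'' (which itself would need integrability of the coefficient matrix) fail. This is the crux of the lemma, not a secondary subtlety, and it is precisely where the paper's rescaling argument does the work you are trying to do by rigidity.

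Your preliminary steps (deriving $\overrightarrow{\Delta}_g X = 0$ and upgrading the regularity of $X$ from $W^p_{0,\delta,\gamma}$ to $W^p_{2,\delta,\gamma}$ near infinity) are fine and are in fact used implicitly in the paper. But to finish along the lines you propose, you would need to replace the last paragraph with an actual unique continuation result, or switch to the paper's dilation-and-compactness strategy.
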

		\begin{proof}{The proof requires a rescalling argument. For  $(x,\theta)\in E^1$}, we define $g_m(x,\theta)=g(2^mx,2^m\theta)$. Remember that	
			\begin{align*}
				\bar{g}_m(x) & =  \frac{1}{(2\pi)^k}\int_{\m T^k}g(2^mx,2^m\theta)\\
				& =  {\frac{1}{(2\pi)^k}}\sum_{l_1=0}^{2^m-1}\sum_{l_2=0}^{2^m-1}\dots \sum_{l_k=0}^{2^m-1}\int_{\prod^k_{i=1}[\frac{2\pi l_i}{2^m},\frac{2\pi(l_i+1)}{2^m}]}g(2^m, x 2^m\theta)\\
				& =  \frac{2^{mk}}{(2\pi)^k 2^{mk}}\int_{\m T^k}g(2^mx,\theta)\\
				& =  \bar{g}(2^m x).
			\end{align*}	
			We can say that $||g_m-\zeta||_{W^p_{2,\delta,\gamma}}\to 0$ as $m\to\infty$ as for $l\in\overline{0,2}$, 
			\begin{equation}\label{gm1}
				\int_{\R^n}|\nabla^lg_m|^p\langle x\rangle^{p(\sigma+l)}\,dx\leq (2^m)^{-p\sigma-n}||g||_{W^p_{2,\sigma,\lambda}}
			\end{equation}
			and
			\begin{equation}\label{gm2}
				\int_{\R^n\times \m T^m}|\nabla^l(g_m-\bar{g}_m)|^p\langle x\rangle^{p\lambda}dxd\theta\leq (2^{m})^{-p\lambda-(n+m)+2p}||g||_{W^p_{2,\sigma,\lambda}}.
			\end{equation}
			The hypothesis $\sigma>-\frac{n}{p}$ present in the set of hypothesis \eqref{conditions} ensures that the quantities present in \eqref{gm1} tends to zero, and the additionnal hypothesis $\lambda +\frac{n+m}{p}>2$ ensures that the quantites present in \eqref{gm2} tends to zero, as $m$ tend to infinity.
			\par {We assume that there is no neighborhood of infinity in which $X$ is identically $0$.  Let $\hat{X}_m(x,\theta)=X(2^mx,2^m\theta)$.  With our hypothesis, we have  $||  \hat{X}_m ||_{W^p_{2,\delta,\gamma}}\neq 0$ and we can define the normalized vector field $X_m=\frac{\hat{X}}{|| \hat{X}_m ||_{W^p_{2,\delta,\gamma}}}$. Note that $\mathcal{L}_{g_m}X_m=0$  and $\overrightarrow{\Delta}_{g_{m}}(X_{m})=0$. We use estimate \eqref{estimp}, together with Proposition 5.2 in \cite{Max04} to treat the inner boundary of $E_1$ : 
				\begin{align*}
					&\|X_{m_1}-X_{m_2}\|_{W^p_{2,\delta,\gamma}}\\
					\lesssim&\|\overrightarrow{\Delta}_{g_{m_1}}(X_{m_1}-X_{m_2})\|_{W^p_{0,\delta+2,\gamma}}+\|X_{m_1}-X_{m_2}\|_{L^p(B_R \cap E_1)}+\|\q L_{g_{m_1}}(X_{m_1}-X_{m_2})\|_{W^{1-\frac{1}{p},p}(\partial E_1)}\\
					\lesssim&	\|g_{m_1}-g_{m_2}\|_{W^p_{2,\sigma,\lambda}} +\|X_{m_1}-X_{m_2}\|_{L^p(B_R \cap E_1)}
				\end{align*}
				From the $W^p_{2,\delta,\gamma}$ boundedness of the sequence $(X_m)$, it follows that, up to a subsequence, the vectors $X_m$ converge in $L^p(B_R)$ to some $X_0$.
				which means that $(X_m)$ is a Cauchy sequence in $W^p_{2,\delta,\gamma} $ and $X_m\to X_0$ in $W^p_{2,\delta,\gamma}$. This implies that $X_0$ is a conformal Killing vector field with respect to the $\zeta$ metric, and thus $X_0=0$. Hence, $X_m\to 0$ in $W^p_{2,\delta,\gamma}$, which contradicts $||X_m||_{W^p_{2,\delta,\gamma}}=1$.}
		\end{proof}
		\begin{thm}\label{ckf vanish}Let $g$ be such that $g_{ij}-\zeta_{ij}\in W^p_{2,\delta,\gamma}$.  We assume \eqref{conditions} and \eqref{condinj}.
			Then there exists no nontrivial conformal Killing vector field on $(M,g)$ in $W^p_{2,\delta,\gamma}$.
		\end{thm}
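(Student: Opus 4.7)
The plan is to combine the vanishing of $X$ near infinity, already provided by Lemma \ref{lminjext}, with a propagation-of-vanishing argument driven by Lemma \ref{third deriv}. The manifold $M=\m R^n\times\m T^m$ is connected, so it suffices to show that if the 2-jet $(X,\nabla X,\nabla^2 X)$ vanishes at one point, then it vanishes along every smooth curve emanating from it.

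First I would verify that $X$ has enough interior regularity for its pointwise 2-jet to make sense. Since $\mathcal{L}_g X=0$ implies $\overrightarrow{\Delta}_g X=0$ and $\overrightarrow{\Delta}_g$ is elliptic (as recalled in Section \ref{secflat}), interior elliptic regularity together with the Sobolev embedding $W^{2,p}\hookrightarrow\q C^1$ (valid because $d<p$) lets me iterate enough derivatives onto $X$ so that the three quantities $X$, $\nabla X$, $\nabla^2 X$ are continuous on $M$. On this regularity level, the derivation of Lemma \ref{third deriv} is valid pointwise, and $\nabla^3 X$ is expressed as a linear combination of $X$ and $\nabla X$ with coefficients depending on the Riemann tensor of $g$ and its first derivative.

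Next, I would carry out the ODE step. Fix $p\in M$ and pick $R$ large enough that Lemma \ref{lminjext} yields $X\equiv 0$ on $E_R$. By connectedness, choose a smooth curve $\sigma:[0,1]\to M$ with $\sigma(0)\in E_R$ and $\sigma(1)=p$. Setting
\[
V(t)=\bigl(X(\sigma(t)),\;\nabla X(\sigma(t)),\;\nabla^2 X(\sigma(t))\bigr),
\]
the derivative $\dot V(t)$ in the first two slots only reintroduces the components already present in $V(t)$, while Lemma \ref{third deriv} rewrites the third slot, $\dot\sigma^\lambda\nabla_\lambda\nabla^2 X$, as a linear function of $X$ and $\nabla X$. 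Thus $V$ satisfies a linear ODE $\dot V(t)=A(t)V(t)$ with continuous matrix $A$. Since $\sigma(0)\in E_R$ forces $V(0)=0$, uniqueness for linear ODEs gives $V\equiv 0$ on $[0,1]$; in particular $X(p)=0$. As $p\in M$ was arbitrary, $X\equiv 0$ on $M$.

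The main obstacle is the mismatch between the regularity naively required by Lemma \ref{third deriv} and the regularity actually available: the identity formally involves $\nabla\mathrm{Riem}$, i.e.\ three derivatives of $g$, but only $g_{ij}-\zeta_{ij}\in W^p_{2,\sigma,\lambda}$ is assumed. To circumvent this I would approximate $g$ by smooth metrics $g_\varepsilon$ converging to $g$ in $W^p_{2,\sigma,\lambda}$, apply the identity and the ODE argument for each $g_\varepsilon$, and pass to the limit using the stability estimates from Theorem \ref{thlin} together with the continuous embedding into $\q C^1$. An equivalent route is to skip Lemma \ref{third deriv} altogether at the low-regularity stage and instead appeal to Aronszajn-type unique continuation for the second-order elliptic system $\overrightarrow{\Delta}_g X=0$, which propagates the vanishing from the open set $E_R$ to all of $M$ directly. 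Either route produces the conclusion $X\equiv 0$.
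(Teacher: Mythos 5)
Your approach is genuinely different from the paper's, and it has a real gap. You correctly identify the obstruction: the Killing transport identity from Lemma \ref{third deriv} expresses $\nabla^3 X$ in terms of $X$ and $\nabla X$ with coefficients involving $\nabla\mathrm{Riem}$, which requires three derivatives of $g$; but only $g_{ij}-\zeta_{ij}\in W^p_{2,\sigma,\lambda}$ is assumed, so $g\in C^{1,\alpha}_{loc}$ at best and $X$ can only be bootstrapped to $W^{2,p}_{loc}$, not $C^2$. The problem is that neither of your two proposed fixes resolves this. The approximation route does not make sense as written: if $g_\varepsilon\to g$ smoothly, the given $X$ is a conformal Killing field for $g$ but \emph{not} for $g_\varepsilon$, so Lemma \ref{third deriv} and the resulting linear ODE simply do not apply to the pair $(g_\varepsilon,X)$. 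One would have to run the ODE with an inhomogeneous forcing term controlled by $\mathcal{L}_{g_\varepsilon}X$ and, worse, by its second derivatives, and there is no reason these converge to zero in $C^0$ on compact sets with only $W^{2,p}$ control on $g$. The Aronszajn alternative is a reasonable idea in principle, but it is far from ``equivalent'' and far from immediate: $\overrightarrow{\Delta}_g X=0$ is a coupled second-order elliptic \emph{system}, the classical Aronszajn theorem is scalar, and the available system versions require Carleman estimates whose applicability at $C^{1,\alpha}$ coefficient regularity would need careful checking; citing it as a one-liner leaves the proof incomplete.

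The paper avoids all of this. After Lemma \ref{lminjext} gives vanishing of $X$ near infinity, it follows Maxwell \cite{Max04}: suppose toward a contradiction that the set on which $X$ vanishes does not exhaust $M$, pick a boundary point $x_0$ of the vanishing set and a sequence $x_k\to x_0$ where $X(x_k)\neq 0$, and rescale both $g$ and $X$ around $x_0$ at scale $r_k=2|x_k|\to 0$. The rescaled metrics $g_k$ converge to the flat metric in $W^{2,p}$ on the unit ball, and the normalized rescaled $X_k$ (with $\|X_k\|_{W^{2,p}}=1$) converge, via the a priori estimate \eqref{estimp} and compactness, to a conformal Killing field $X_0$ of the \emph{flat} metric that vanishes on a fixed open set. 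For the flat metric Lemma \ref{third deriv} degenerates to $\nabla^3 X_0=0$, so $X_0$ has polynomial components and $X_0\equiv 0$, contradicting $\|X_k\|_{W^{2,p}}=1$. This compactness/blow-up argument buys exactly what your route lacks: the transport identity is only ever used on the flat limit, where it is elementary and needs no derivatives of curvature, so the theorem holds at precisely the stated $W^{2,p}$ regularity.
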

		\begin{proof} {Loosely speaking, the proof of  Theorem 6.4 in \cite{Max04} applies to Theorem \ref{ckf vanish}  since when we zoom on a point of $M$, there is no difference between $\m R^{n+m}$ and $\m R^n \times \m T^n$. But to be precise, we rewrite the proof here}. {So far,  we have proved that $X$ vanishes in a neighborhood of infinity. Consequently we can consider the minimum of the radius $R$ such that $X$ vanishes in $(\m R^n -B_R)\times \m T^n$. If this minimum is zero, then $X$ vanishes everywhere. Otherwise, let $R_0>0$ be that minimum: there exists $x_0$ on $S_{R_0} \times \m T^m$, and $x_k\to x_0$ such that $X(x_k)\neq 0$.  We may choose a set of coordinates centered around $x_0$ such that $g_{\mu\nu}(0)=\delta_{\mu\nu}(0)$ ($x_0$ is the point of coordinate $0$).
				Let $r_k =2|x_k|$: we have $r_k>0$ and $r_k\to 0$ by definition. We construct a sequence of metrics $g_k(x,\theta)=g(\frac{x}{|x_k|}, {\frac{\theta}{r_k}})$ {on the unit ball $B_1$ centered in 0}. Obviously, $g_k\to{\delta}$ in $W^{2,p}(B_1)$. 
			}
			
			{We define a corresponding sequence of vectors $$X_k(x,\theta)=\frac{X(x/r_k,{\theta/r_k})}{\|X(x/r_k,{\theta/r_k})\|_{W^{2,p}}}$$ in $B_1$. Note that the renormalization is possible as the vectors are nontrivial by our supposition. 
				We have
				$$\q L_{g_k}X_k=0, \quad \overrightarrow{\Delta}_{g_k} X_k =0.$$
				Consequently, as in the proof of Lemma \ref{lminjext}, we have
				$$\|X_{k_1}-X_{k_2}\|_{W^{2,p}(B_1)}\lesssim \|g_{k_1}-g_{k_2}\|_{W^{2,p}(B_1)}+\|X_{k_1}-X_{k_2}\|_{L^p(B_1)}.$$
				From the $W^{2,p}$ boundedness of $X_k$, it follows that up to a subsequence, $X_k$ converges strongly in $C^0$ to some $X_0\in W^{2,p}$. From the inequality above, this implies in fact that $X_k$ converges strongly in $W^{2,p}$.
			}	
			
			{	It follows that $X_0$ is a conformal Killing vector field for $\delta$. From the $\mathcal{C}^0(B_1)$ convergence of $X_k$ to $X_0$, and the fact that by definition of $x_0$, the vector field $X_k$ vanish in a fixed open set, then $X_0$ also vanishes in this open set.  Since the coefficients of $X_0$ are polynomials, this implies that $X_0=0$, which contradicts the fact that $||X_k||_{W^{2,p}}=1$.
				This contradicts the fact that $R_0>0$ and implies that $X=0$ on $M$.}
		\end{proof}
		Finally, we can apply this result to study the injectivity of 	$\overrightarrow{\Delta}_g$.
		\begin{prp}\label{prpinj}
			Assume that $\delta,\gamma, \sigma, \lambda$ satisfy the set of hypothesis \eqref{conditions} and \eqref{condinj}. Then $\overrightarrow{\Delta}_g : W^p_{2,\delta,\gamma} \to W^p_{0,\delta+2,\gamma}$ is injective.
		\end{prp}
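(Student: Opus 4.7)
The plan is to reduce the injectivity question to the vanishing of conformal Killing vector fields, which has already been handled by Theorem \ref{ckf vanish}. Suppose $W \in W^p_{2,\delta,\gamma}$ satisfies $\overrightarrow{\Delta}_g W = 0$. The standard identity, obtained by integrating by parts on a bounded domain, reads
\begin{equation*}
\int_M \langle W, \overrightarrow{\Delta}_g W\rangle_g\, dvol_g = \frac{1}{2}\int_M |\mathcal{L}_g W|_g^2\, dvol_g,
\end{equation*}
so if we can justify this integration by parts on all of $M$, we conclude $\mathcal{L}_g W = 0$, i.e.\ $W$ is a conformal Killing field. Then Theorem \ref{ckf vanish} directly gives $W=0$.

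To justify the integration by parts, I would introduce a cutoff $\chi_R(x) = \chi(|x|/R)$ supported in $B_{2R}\times \m T^m$ and equal to $1$ on $B_R \times \m T^m$, compute $\int_M \chi_R \langle W,\overrightarrow{\Delta}_g W\rangle_g\, dvol_g$ via integration by parts, and let $R\to\infty$. The bulk term gives $\frac{1}{2}\int |\mathcal{L}_g W|_g^2$ plus error terms involving $\nabla \chi_R$ paired with $W$ and $\mathcal{L}_g W$. Since $g_{ij}-\zeta_{ij}\in W^p_{2,\sigma,\lambda}$ is bounded, the volume form is comparable to the flat one, and the error terms are controlled by integrals of $|W||\mathcal{L}_g W|/R$ over the annular shell $A_R = \{R\leq |x|\leq 2R\}\times \m T^m$. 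Using the multiplication property of Proposition \ref{multemb prop} together with the weights $\delta > -n/p$ and $\gamma>0$ supplied by \eqref{condinj}, the product $\langle x\rangle^{2\delta+1}$ (or the corresponding mixed weight for non-zero modes) is integrable at infinity, and both integrals $\int |W|^2$ and $\int |\mathcal{L}_g W|^2$ over $A_R$ tend to $0$ as $R\to\infty$. Hence the limiting identity holds and $\mathcal{L}_g W\equiv 0$.

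The part that demands the most care is this cutoff argument, because $W^p_{2,\delta,\gamma}$ is an $L^p$-based space rather than $L^2$, so one must verify that $W\otimes \mathcal{L}_g W$ lies in $L^1$ on $M$. This follows from the multiplication inclusion $W^p_{2,\delta,\gamma}\times W^p_{1,\delta+1,\gamma}\subset W^1_{0,2\delta+1,2\gamma}$ (checked via the conditions \eqref{hyp mult} using that $p>d$) together with the decay conditions $2\delta+1 > -n$ for the zero-mode piece and $2\gamma>0$ for the non-zero-mode piece, guaranteed by $\delta>-n/p$ and $\gamma>0$ from \eqref{condinj}. Once this is in place, the rest of the proof is just a two-line invocation of Theorem \ref{ckf vanish}.
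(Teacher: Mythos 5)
Your overall strategy---integrate $\langle W,\overrightarrow{\Delta}_g W\rangle_g$ by parts to deduce $\mathcal L_g W=0$, then invoke Theorem~\ref{ckf vanish}---is the same route the paper takes; the paper dispatches the integration by parts with a one-line ``by a density argument''. Trying to make the cutoff step rigorous is the right instinct, but your quantitative claims do not follow from the hypotheses $\delta>-n/p$, $\gamma>0$ alone, which is exactly where the subtlety lies. The boundary term is, schematically, $\frac{1}{R}\int_{A_R}|W|\,|\mathcal L_g W|$. On the zero mode, the pointwise decay from the weighted Sobolev embedding is $|\bar W|\lesssim\langle x\rangle^{-\delta-n/p}$ and $|\overline{\mathcal L_g W}|\lesssim\langle x\rangle^{-\delta-1-n/p}$, giving a boundary contribution of order $R^{\,n-2-2\delta-2n/p}$; this vanishes only if $\delta>\frac{n}{2}-1-\frac{n}{p}$, which is strictly stronger than $\delta>-n/p$ once $n\geq 3$. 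For the same reason your assertions that $\int_{A_R}|W|^2$ and $\int_{A_R}|\mathcal L_g W|^2$ tend to zero, and that $W\otimes\mathcal L_g W\in L^1(M)$, are in general false in the allowed range of weights. (Also, the inclusion $W^p_{2,\delta,\gamma}\times W^p_{1,\delta+1,\gamma}\subset W^1_{0,2\delta+1,2\gamma}$ is not among the multiplication laws of Proposition~\ref{multemb prop}, which only map into $W^p$-spaces for the \emph{same} $p$; and even if it held, membership in a weighted $W^1$-space with weights $2\delta+1$, $2\gamma$ gives unweighted $L^1$ only when those weights are nonnegative, which \eqref{condinj} does not guarantee.) A parallel difficulty affects the non-zero modes, where $\gamma>0$ is likewise too weak to force the annular contribution to vanish.

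The missing ingredient is that a $W$ in the kernel of $\overrightarrow{\Delta}_g$ is not a \emph{generic} element of $W^p_{2,\delta,\gamma}$: by elliptic regularity and the asymptotics of decaying solutions of homogeneous elliptic systems (as in Bartnik \cite{Bar86} or Maxwell \cite{Max04}, whom the paper follows in this section), the zero mode of such a $W$ actually decays like $\langle x\rangle^{2-n}$, strictly faster than $\langle x\rangle^{-\delta-n/p}$ throughout the admissible range $-n/p<\delta<n-2-n/p$, and it is this improved decay that makes the boundary terms vanish when $n\geq 3$. Without first establishing this bootstrap (or, alternatively, restricting to the strictly stronger hypothesis $\delta>\frac{n}{2}-1-\frac{n}{p}$ and arguing that the general case reduces to it), the integration by parts is not justified. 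The paper's own ``density argument'' is terse about precisely this point, so your instinct that it needed filling in was correct; the justification you supply, however, does not close the gap.
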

		\begin{proof}
			The Laplacian operator $\overrightarrow{\Delta}_g$ is injective. Indeed, any smooth compactly supported $X$ in the kernel of $\overrightarrow{\Delta}_g$ is a conformal Killing vector field, as 
			\begin{equation*}
				\int_M X\overrightarrow{\Delta}_g X d\mu_g=\int_M (\mathcal{L}_gX)^2d\mu_g=0.
			\end{equation*}
			By a density argument, the same holds true for $X\in W^p_{2,\delta,\gamma}$. As conformal Killing fields that decay at infinity vanish everywhere (Theorem \ref{ckf vanish}), we conclude that the kernel of $\overrightarrow{\Delta}_g$ is trivial.
		\end{proof}
		\subsection{Proof of Theorem \ref{thinv}}\label{sec iso L}
		
		We recall that $g_{ij}-\zeta_{ij}\in W^p_{2,\sigma,\lambda}$.
		The aim of this section is to prove that the operators $\Delta_g, \overrightarrow{\Delta}_g : W^p_{2,\delta,\gamma} \to W^p_{0,\delta+2,\gamma}$ are isomorphisms, under the conditions \eqref{conditions}, \eqref{condinj}, and the additional condition
		\begin{equation}
			\label{condiso}
			\delta <-\frac{n}{p}+n-2.
		\end{equation}
		To prove Theorem \ref{thinv}, we first consider the case where $g$ is the flat metric, and go to the asymptotically flat case with a continuity argument.
		
		Let  $P$ equal to either $\Delta$ or $\overrightarrow{\Delta}$. Both cases can easily be treated at the same time. From the previous section, we see that, when $\delta>-\frac{n}{p}$ and $\gamma>0$, $P: W^p_{2,\delta,\gamma} \to W^p_{0,\delta+2,\gamma}$ is injective. We also know that $$\wb{Im( P: W^p_{2,\delta,\gamma} \to W^p_{0,\delta+2,\gamma})}=   
		(Ker (P^* : W^{p'}_{0,-\delta-2,-\gamma} \to W^{p'}_{-2,-\delta,-\gamma}))^\perp,$$
		where $p'$ is the dual of $p$. Recall that $W^{p'}_{-2,-n-\delta,-\gamma}$ is the subspace of $\mathcal{D}'(\R^n\times\m T^m)$ consisting of the distributions which extend to give bounded linear functionals on $W^{p}_{2, \delta,\gamma}$, with the dual norm. 
		We study the kernel of $P^\ast$. Assume that $P^\ast u =0$ for $u \in W^{p'}_{0,-\delta-2,-\gamma}$. Since $P$ is symmetric, we have $Pu=0$. Then $u$ is smooth and, since $P$ has constant coefficients, $P \bar{u}=0$ and $P(u-\bar{u})=0$. Proposition \ref{lms1} implies then that $u-\bar{u}=0$. To prove that $\bar{u}=0$, we notice that, with estimate \eqref{estimp}, and since  $\bar{u}$ does not depend on the $\theta$ coordinates, $\bar{u}$ belongs to $W^{p'}_{2,-\delta}$. Moreover if we assume \eqref{condiso},  we have
		$$-\frac{n}{p'}=-n+\frac{n}{p}<-\delta-2,$$
		so by Proposition \ref{prpinj} we have that $\bar{u}=0$.
		
		In this case, we have that $P^*$ is injective, and therefore that $P$ is surjective. By this reasoning, both $P$ and $P^*$ are bijective.
		
		We consider $L$ equal to either $\Delta_g$ or $\overrightarrow{\Delta}_{g}$. To prove the bijectivity, we use the following continuity result from Theorem 5.2 in Gilbarg Trudinger \cite{giltru84}.
		
		\begin{thm}\label{thgt1}
			Let $S_0,S_1 : \q B \to \q V$ be bounded linear operator from a Banach space $\q B$ to a normed vector space $\q V$. Let $S_t=(1-t)S_0+tS_1$. We assume that there exists $C$ such that for all $t\in[0,1]$ we have
			$$\|x\|_{\q B}\leq C \|S_t x\|_{\q V}.$$
			Then $S_1$ is surjective if and only if $S_0$ is surjective.
		Moreover, the decomposition $S_t=(1-t)S_0+tS_1$ can be relaxed in $\|S_t-S_s\|_{\q B \to \q V} \leq C(t-s)$. 
		\end{thm}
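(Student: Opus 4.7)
The plan is to establish this via the classical method of continuity, which reduces to a Banach fixed point argument combined with a finite iteration across the parameter interval $[0,1]$. First, observe that the uniform a priori estimate $\|x\|_{\mathcal B}\leq C\|S_t x\|_{\mathcal V}$ has two immediate consequences: each $S_t$ is injective, and whenever $S_t$ is surjective it is automatically invertible with $\|S_t^{-1}\|_{\mathcal V \to \mathcal B}\leq C$. The strategy is therefore to show that if $S_{t_0}$ is surjective for some $t_0 \in [0,1]$, then so is $S_t$ for every $t$ in a neighborhood of $t_0$ whose radius depends only on $C$ and on the Lipschitz constant of $t\mapsto S_t$, not on $t_0$ itself.

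For this local step, fix such a $t_0$ together with a target $y\in\mathcal V$. Solving $S_t x = y$ is equivalent, after applying $S_{t_0}^{-1}$ to both sides of $S_{t_0} x = y + (S_{t_0}-S_t)x$, to the affine fixed point equation
$$x = F(x) := S_{t_0}^{-1} y + S_{t_0}^{-1}(S_{t_0}-S_t)x.$$
The Lipschitz constant of $F$ is bounded by
$$\|S_{t_0}^{-1}\|_{\mathcal V \to \mathcal B}\cdot\|S_{t_0}-S_t\|_{\mathcal B \to \mathcal V}\leq C\cdot C|t-t_0|=C^2|t-t_0|,$$
using the hypothesis $\|S_t-S_s\|_{\mathcal B \to \mathcal V}\leq C|t-s|$ (which in the original convex-interpolation formulation reduces to $\|S_t-S_s\| = |t-s|\cdot\|S_1-S_0\|$). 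Choosing $|t-t_0|<1/(2C^2)$ makes $F$ a strict contraction on the Banach space $\mathcal B$, and Banach's fixed point theorem yields a unique $x\in\mathcal B$ with $F(x)=x$, hence with $S_t x = y$. Thus $S_t$ is surjective for all such $t$.

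Since the radius $\eta:=1/(2C^2)$ is independent of $t_0$, the set $T=\{t\in[0,1] : S_t \text{ is surjective}\}$ is open, and a finite iteration (at most $\lceil 1/\eta\rceil+1$ steps of length $\eta/2$) propagates surjectivity from either endpoint across the whole of $[0,1]$. In particular $S_0$ surjective implies $S_1$ surjective, and, by running the same argument backwards from $t=1$, the converse also holds. There is no real obstacle here; the only points requiring care are that the uniform a priori estimate must hold at every $t\in[0,1]$ (so the inverses $S_{t_0}^{-1}$ have a common bound independent of $t_0$), and that the Lipschitz condition on $t\mapsto S_t$ makes the step size $\eta$ depend only on the data, not on the particular $t_0$ reached during the iteration.
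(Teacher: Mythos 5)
Your argument is correct and is precisely the classical method-of-continuity proof; the paper does not supply its own proof here but cites Theorem 5.2 of Gilbarg--Trudinger, whose proof is exactly this fixed-point-plus-finite-covering scheme. The key observations you isolate --- that the uniform a priori estimate makes every surjective $S_{t_0}$ invertible with $\|S_{t_0}^{-1}\|\leq C$, that the fixed-point equation lives in the complete space $\mathcal B$ (so completeness of $\mathcal V$ is never needed), and that the contraction radius $\eta$ is independent of $t_0$ --- are the ones that make the argument go through, and your proof matches the cited reference in substance.
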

		
		We are ready to conclude the proof of Theorem \ref{thinv}.
		\begin{proof}[Proof of Theorem \ref{thinv}] Let $g_{ij}-\zeta_{ij} \in W^p_{2,\sigma,\lambda}$. We work under the hypothesis \eqref{conditions}, \eqref{condinj} and \eqref{condiso}.
			Let us write, for $t=[0,1]$ : $g_t= (1-t)\zeta+ tg$, and consider $S_t$ equal to either $\Delta_{g_t}$ or $\overrightarrow{\Delta}_{g_t}$. Thanks to Corollary \ref{corinj} and Proposition \ref{prpinj} we have that $S_t$ is injective. Therefore \eqref{estimp} yields
			$$\|u\|_{W^p_{2,\delta,\gamma}}\lesssim \|S_t u\|_{W^p_{0,\delta+2,\gamma}}.$$
			Since $S_0$, which is either the Laplacian or the conformal Laplacian for the flat metric, is an isomorphism, it is the same for $S_1$, which is either $\Delta_g$ or $\overrightarrow{\Delta_g}$.
			This concludes the proof of Theorem \ref{thlin}.	
		\end{proof}
		
		\section{Solving the constraint equations}\label{sec_cont}
		
		\subsection{The barrier method}\label{secbar}
		\par The barrier method (or the sub- and super-solution method) was initially developed by Isenberg as a means of solving non-linear equations on a compact set in $\R^n$. Its proof can be extended to the asymptotically flat product manifold $(M,g)$ in the same way as for the asymptotically Euclidean manifold. 	{The following theorem is the equivalent of Theorem 1, Appendix B in \cite{ChoBruIseYor00}. We give the proof here for the sake of completeness, so that the reader is convinced that it is the same as the one for asymptotically Euclidean manifolds.} 
		\par Let $I=[l,m]$ be a bounded interval in $\R$. We define the operator $F:M\times I\to\R$ such that
		\begin{equation*}
			F(x,\theta,y)=\sum_{i} a_{P_i}(x,\theta)y^{P_i}.
		\end{equation*}
		Here, $P_i\in\R$ are a finite number of exponents and the coefficients $a_{P_i}\in W^p_{0,\delta+2,\gamma}$. We are interested in solving the equation $\Delta_g \varphi=F(x,\theta,\varphi)$ for $\varphi\in W^p_{2,\delta,\gamma}$. By a slight abuse of notation, we denote  $F(x,\theta,\varphi)$ the function $(x,\theta)\mapsto F(x,\theta,\varphi(x,\theta))$.
		\par A function $\varphi_{-}\in \mathcal{C}^2$ is called a subsolution of the equation $\Delta_g \varphi=F(x,\theta,\varphi)$ if 
		\begin{equation*}
			\Delta_g \varphi_{-}\leq F(x,\theta,\varphi_{-}).
		\end{equation*}
		Similarly, we call $\varphi_{+}\in \mathcal{C}^2$ a supersolution if 
		\begin{equation*}
			\Delta_g\varphi_{+}\geq F(x,\theta,\varphi_{+}).
		\end{equation*}
		\begin{thm} Let $g$ be a metric on $M$ such that $g_{ij}-\zeta_{ij}\in W^p_{2,\sigma,\lambda}(M)$. We assue the hypothesis of Theorem \ref{thexvac} for $(\sigma,\lambda,\delta,\gamma)$. {Let us assume that the $a_{P_i} \in W^p_{0,\delta+2,\gamma}$ are bounded.}
			Suppose that $\Delta_g\varphi=F(x,\theta,\varphi)$ admits a subsolution $\varphi_{-}$ and a supersolution $\varphi_{+}$ such that
			\begin{equation*}
				0<l\leq\varphi_{-}\leq \varphi_{+}\leq m,
			\end{equation*}
			and that 
			\begin{equation*}
				A_{-}:=\lim_{\infty}\varphi_{-}\leq A,\quad A_{+}:=\lim_{\infty}\varphi_{+}\geq A.
			\end{equation*}
			Then the equation admits a solution $\varphi$ that verifies
			\begin{equation*}
				\varphi_{-}\leq \varphi\leq\varphi_{+},\quad A-\varphi\in W^p_{s+2,\delta,\gamma}.
			\end{equation*}
			{In the case where $F(x,\theta,y)$ does not contain negative powers of $y$, the condition $l>0$ is not necessary.}
		\end{thm}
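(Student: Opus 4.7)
The plan is to mimic the classical monotone iteration (Perron) scheme of \cite{ChoBruIseYor00}, with Theorem \ref{thinv} replacing the corresponding isomorphism in the asymptotically Euclidean setting. Since $F(x,\theta,y)=\sum_i a_{P_i}(x,\theta)y^{P_i}$ is a finite sum with bounded coefficients and $y$ ranges over the compact interval $[l,m]$ (bounded away from $0$ whenever negative exponents appear), I can choose $K>0$ large enough that $y\mapsto F(x,\theta,y)+Ky$ is non-decreasing on $[l,m]$ uniformly in $(x,\theta)$. With the sign convention of the paper, $\Delta_g+K$ is then covered by Theorem \ref{thinv} and is an isomorphism $W^p_{2,\delta,\gamma}\to W^p_{0,\delta+2,\gamma}$.

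Next I would construct a decreasing sequence $\psi_n$ starting from $\psi_0=\varphi_{+}$ by defining $\psi_{n+1}=A+u_{n+1}$, where $u_{n+1}\in W^p_{2,\delta,\gamma}$ is the unique solution of
\begin{equation*}
(\Delta_g+K)u_{n+1}=F(x,\theta,\psi_n)+K(\psi_n-A).
\end{equation*}
The right-hand side lies in $W^p_{0,\delta+2,\gamma}$: the term $F(x,\theta,\psi_n)$ is a sum of weighted coefficients $a_{P_i}$ multiplied by bounded factors $\psi_n^{P_i}$, and $K(\psi_n-A)$ belongs to the same space inductively, since $\psi_n-A\in W^p_{2,\delta,\gamma}\hookrightarrow W^p_{0,\delta+2,\gamma}$. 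A parallel increasing iteration starts from $\varphi_{-}$. The crucial monotonicity step is: setting $w=\psi_n-\psi_{n+1}$, the supersolution property and the choice of $K$ yield $(\Delta_g+K)w\geq 0$ with $w$ tending to a non-negative limit at infinity, so Lemma \ref{positivity cor} gives $w\geq 0$. The lower bound $\psi_{n+1}\geq\varphi_{-}$ follows analogously from the subsolution inequality and the monotonicity of $y\mapsto F(x,\theta,y)+Ky$.

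Once monotonicity and uniform $L^\infty$ bounds $\varphi_{-}\leq\psi_n\leq\varphi_{+}$ are in place, the a priori estimate of Theorem \ref{thinv} furnishes uniform bounds on $u_n$ in $W^p_{2,\delta,\gamma}$; by the compact embedding in \eqref{comp} of Proposition \ref{multemb prop}, a subsequence converges strongly in a weaker weighted space and weakly in $W^p_{2,\delta,\gamma}$, which is enough to pass to the limit in the nonlinear equation and obtain $\varphi=A+u$ with $u\in W^p_{2,\delta,\gamma}$ satisfying $\Delta_g\varphi=F(x,\theta,\varphi)$ and $\varphi_{-}\leq\varphi\leq\varphi_{+}$. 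In the case with no negative powers, the same argument goes through without needing $l>0$, since no division by $\psi_n$ ever occurs. The main obstacle is the very first iterate: $\varphi_{+}-A$ need not itself lie in $W^p_{2,\delta,\gamma}$ because we only know $\varphi_{+}\to A_+\geq A$ at infinity. I would handle this by first replacing $\varphi_{+}$ with a slightly modified supersolution agreeing with $A$ outside a large ball (or, equivalently, by solving Dirichlet problems on an exhausting sequence of compact subdomains with boundary datum $A$ and passing to the limit in $R$), so that the iteration remains inside the weighted Sobolev framework from step one onward while still producing a solution trapped between $\varphi_{-}$ and $\varphi_{+}$ and with the required decay $A-\varphi\in W^p_{s+2,\delta,\gamma}$.
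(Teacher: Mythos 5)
Your overall strategy — a monotone Perron-type iteration starting from one of the barriers, invoking Theorem~\ref{thinv} at each linear step, the maximum principle of Lemma~\ref{positivity cor} for monotonicity and ordering, then compactness to pass to the limit — is exactly the one the paper uses. The difference lies in the shift you add to make the nonlinearity monotone, and that difference is a genuine gap. You take a \emph{constant} $K>0$; the paper instead picks a bounded, non-negative function $k\in W^p_{0,\delta+2,\gamma}$ with $k(x,\theta)\geq\sup_{l\leq y\leq m}|\partial_y F(x,\theta,y)|$, which exists precisely because the $a_{P_i}$ are bounded and lie in $W^p_{0,\delta+2,\gamma}$ and $y$ ranges over the compact interval $[l,m]$. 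The constant $K$ fails on two counts. First, since $\delta>-\frac{n}{p}$ we have $\delta+2>-\frac{n}{p}$, so $\int_{\R^n}\langle x\rangle^{p(\delta+2)}\,dx=\infty$ and a nonzero constant does not belong to $W^p_{0,\delta+2,\gamma}$; Theorem~\ref{thinv} therefore does not apply to $\Delta_g+K$. Worse, $\Delta_g+K:W^p_{2,\delta,\gamma}\to W^p_{0,\delta+2,\gamma}$ is not even a bounded map — for a zero-mode $u=\bar u$ the term $K\bar u$ only decays at the rate $\delta$, two orders short of the $\delta+2$ weight required in the target. Second, the ``obstacle'' you flag, that $\varphi_{\pm}-A$ need not lie in $W^p_{2,\delta,\gamma}$, together with the remedies you sketch (clipping $\varphi_+$ to equal $A$ outside a large ball, or exhausting by compact domains), is an artifact of the constant $K$: with the paper's decaying $k$, the term $k(\varphi_\pm-A)$ is automatically in $W^p_{0,\delta+2,\gamma}$ because $\varphi_\pm-A$ is merely bounded, so the first iterate $u_1$ already lands in $W^p_{2,\delta,\gamma}$ and the iteration stays there without any modification of the barriers.

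Two minor remarks. Iterating downward from $\varphi_+$ rather than upward from $\varphi_-$ as the paper does is a cosmetic difference. And the final limit passage deserves a bit more care than ``enough to pass to the limit'': the paper shows $F(x,\theta,\varphi_i)\to F(x,\theta,\varphi)$ in $W^p_{0,\delta+2,\gamma}$ using the multiplication estimate $W^p_{1,\delta',\gamma'}\times W^p_{0,\delta+2,\gamma}\to W^p_{0,\delta+2,\gamma}$ for suitable $\delta'<\delta$, $\gamma'<\gamma$ close to $\delta,\gamma$; you should make this step explicit.
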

		\begin{proof}
			
			\par First, we note that if $\varphi $ takes values in $[l,m]$, then $F \in W^p_{0,\delta+2,\gamma}$.
			Let $k\in W^p_{0,\delta+2,\gamma}$ be a bounded positive function such that
			{\begin{equation*}
					k(x,\theta) \geq \sup_{l\leq y \leq m} \left|\partial_y F(x,\theta,y)\right|.
			\end{equation*}}
			\par We set $\varphi_1=A+u_1$, where $u_1\in W^p_{2,\delta,\gamma}$ is the unique solution of the linear equation
			\begin{equation*}
				\Delta_g u_1+ku_1=F(x,\theta,\varphi_{-})+k(\varphi_{-}-A),
			\end{equation*}
			as seen in Theorem \ref{thinv}. The function $\varphi_1$ can easily be shown to take values between $\varphi_{-}$ and $\varphi_{+}$. The first bound follows directly from the maximum principle of Lemma \ref{positivity cor}, given that
			\begin{equation*}
				\Delta_g(\varphi_{1}-\varphi_{-})+k(\varphi_{1}-\varphi_{-})\geq 0
			\end{equation*}
			and $\varphi_{1}-\varphi_{0}$ tends to $A-A_{-}\geq 0$ at infinity.
			\par On the other hand,
			\begin{equation*}
				\Delta_g (\varphi_{+}-\varphi_1)+k(\varphi_{+}-\varphi_1){\geq F(x,\theta,\varphi_{+})-F(x,\theta,\varphi_{-})+k(\varphi_{+}-\varphi_{-})\geq 0}
			\end{equation*}
			and $\varphi_{+}-\varphi_1$ tends to $A_{+}-A\geq 0$ at infinity, then by the same maximum principle we obtain the desired result.
			\par We proceed by recurrence and define $\varphi_i=A+u_i$, with
			\begin{equation*}
				\Delta_g u_i + k u_i=F(x,\theta,\varphi_{i-1})+ku_{i-1}.
			\end{equation*}
			For the sake of this argument, we may denote $\varphi_{-}=u_{0}$. In order to show that the sequence is increasing, we may assume that $u_{i-1}\geq u_{i-2}$  (the initial step $u_1\geq u_0$ has been shown above). Since
			\begin{equation}
				\Delta_g u_{i-1} + k u_{i-1}=F(x,\theta,\varphi_{i-2})+ku_{i-2}.
			\end{equation}
			we see that $(\Delta+k)(u_i- u_{i-1})\geq 0$. Since $u_i-u_{i-1}$ tends to $0$ at infinity, the maximum principle tells us that $u_i\geq u_{i-1}$, and thus $\varphi_i\geq \varphi_{i-1}$.
			\par As before, we can easily see that $\varphi_{-}\leq \varphi_i\leq \varphi_{+}$. {Consequently, for all $x$, $\varphi_i(x)$ has a limit that we note $\varphi(x)$.}
			\par We would like to show that $\varphi-A\in W^p_{2,\delta,\gamma}$ and that it is a solution of our equation. We use Theorem \ref{thinv} to see that
			\begin{equation*}
				\|u_{i+1}\|_{W^p_{2,\delta,\gamma}}\leq C\|F(x,\theta,\varphi_i)+ku_i\|_{W^p_{0,\delta+2,\gamma}},
			\end{equation*}
			which implies that $(u_i)_{i\in\m N}$ are uniformly bounded in $W^p_{2,\delta,\gamma}$ (thanks to the uniform born $l\leq \varphi_i\leq m$). We can thus extract a sequence that converges in $W^p_{1,\delta',\gamma'}$ norm to a function in $u\in {W^p_{2,\delta,\gamma} }$, where $-\frac{n}{p}<\delta'<\delta$ and $0<\gamma'<\gamma$.
			\par {We recall the product estimate
				$$W^p_{1,\delta',\gamma'}\times W^p_{0,\delta+2,\gamma} \to W^p_{0,\delta+2,\gamma}$$
				which holds if $\frac{n+m}{p}<1$, $\gamma \leq 2+\delta+\gamma'$ and $\delta+2\leq \gamma+\gamma'$. Since $\delta+2>0$, these conditions can be fulfied with $\gamma'$ close to $\gamma$.
				Thanks to this product estimate, we can show that $F(x,\theta,\varphi_i)$ converges toward $F(x,\theta,\varphi)$ in $W^p_{0, \delta+2,\gamma}$, 
				which implies that $\varphi$ is a solution to
				$$\Delta_g \varphi = F(x,\theta,\varphi).$$}
		\end{proof}
		
		\subsection{The Hamiltonian equation}\label{secham}
		\begin{thm}\label{thham}
			Let $g_{ij}-\zeta_{ij} \in W^p_{2,\sigma,\lambda}(M)$ and let $a,b\geq 0$, $a,b,h\in W^p_{0,\delta+2,\gamma}$, bounded. We assume the hypothesis of Theorem \ref{thexvac} for $(\sigma,\lambda,\delta,\gamma)$. The equation
			\begin{equation*}
				\Delta_g \varphi + h \varphi = -b\varphi^{\qq-1}+\frac{a}{\varphi^{\qq+1}}
			\end{equation*}
			has a solution $\varphi=A+u$, $u\in W^p_{2,\delta,\gamma}$, $A>0$, $\varphi>0$.
		\end{thm}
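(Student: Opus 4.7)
The plan is to rewrite the equation as $\Delta_g \varphi = F(x,\theta,\varphi)$ with
\[
F(x,\theta,y) \;=\; -h\,y \;-\; b\,y^{\qq-1} \;+\; a\,y^{-(\qq+1)},
\]
and apply the sub-/super-solution theorem of Section~\ref{secbar}. The coefficients $-h$, $-b$, $a$ of the relevant powers of $y$ all lie in $W^p_{0,\delta+2,\gamma}$ and are bounded by hypothesis, so we are in the framework of that theorem. It remains to exhibit a pair $0 < l \leq \varphi_- \leq \varphi_+ \leq m$ with $A_- := \lim_\infty \varphi_- \leq A \leq A_+ := \lim_\infty \varphi_+$.

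For the super-solution, fix a constant $A>0$. The source $a/A^{\qq+1}$ belongs to $W^p_{0,\delta+2,\gamma}$, so Theorem~\ref{thinv} produces a unique $v \in W^p_{2,\delta,\gamma}$ solving $\Delta_g v + h v = a/A^{\qq+1}$. Since this source is non-negative and $v \to 0$ at infinity, the maximum principle of Lemma~\ref{positivity cor} gives $v \geq 0$. I set $\varphi_+ := A + v \geq A$. Using $b \geq 0$ and the monotonicity $y \mapsto a/y^{\qq+1}$,
\[
\Delta_g \varphi_+ + h\,\varphi_+ \;=\; h A + \frac{a}{A^{\qq+1}} \;\geq\; \frac{a}{\varphi_+^{\qq+1}} \;\geq\; -b\,\varphi_+^{\qq-1} + \frac{a}{\varphi_+^{\qq+1}},
\]
so $\varphi_+$ is a super-solution with $A_+ = A$.

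For the sub-solution I try the constant $\varphi_- := \epsilon > 0$. The sub-solution inequality $0 \leq F(x,\theta,\epsilon)$ reduces to the pointwise condition $h\,\epsilon^{\qq+2} + b\,\epsilon^{2\qq} \leq a$, which holds for all small enough $\epsilon$ wherever $a$ is bounded below away from zero, since $h$ and $b$ are uniformly bounded. In the potentially degenerate case where $a$ may vanish, the standard remedy is to regularise by replacing $a$ with $a_\eta := a + \eta$, $\eta > 0$, run the barrier argument to obtain a solution $\varphi^\eta$ of the perturbed equation, and pass to the limit $\eta \to 0^+$ using the uniform $W^p_{2,\delta,\gamma}$ estimate coming from Theorem~\ref{thinv} applied to the linearisation, together with the uniform pointwise lower bound $\varphi^\eta \geq \epsilon$ that ensures positivity is preserved. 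Choosing $\epsilon \leq A$ secures the needed ordering of asymptotic values.

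Applying the barrier method of Section~\ref{secbar} then yields a solution $\varphi$ with $\epsilon \leq \varphi \leq A + v$, of the form $\varphi = A + u$ with $u \in W^p_{2,\delta,\gamma}$, and positivity is immediate from $\varphi \geq \epsilon > 0$. The main subtlety is the sub-solution step: the constant $\epsilon$ is only a genuine sub-solution where $a > 0$, so the approximation-and-compactness argument above is what bridges the vanishing locus of $a$. Everything else is a direct combination of the linear theory (Theorem~\ref{thinv}) and the maximum principle (Lemma~\ref{positivity cor}).
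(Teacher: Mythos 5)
Your super-solution is a valid (and slightly simpler) variant of the paper's: solving the linear equation $\Delta_g v + hv = a/A^{2^*+1}$ and setting $\varphi_+ = A+v$ works, whereas the paper solves the nonlinear auxiliary equation $\Delta_g\varphi_+ = a/\varphi_+^{2^*+1}$ by iteration. That step is fine.

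The sub-solution step, however, has a genuine gap, and the proposed fix does not close it. Since $a \in W^p_{0,\delta+2,\gamma}$ with $\delta+2 > 2 - \frac{n}{p} > 0$, the coefficient $a$ cannot be bounded below by a positive constant (otherwise $\int_{\R^n}|\bar a|^p\langle x\rangle^{p(\delta+2)}\,dx$ would diverge). Hence the pointwise inequality $h\epsilon^{2^*+2} + b\epsilon^{2\cdot 2^*}\leq a$ fails near spatial infinity for \emph{every} $\epsilon>0$: the degenerate case you flag is not exceptional, it is unavoidable. The regularisation $a\mapsto a+\eta$ then fails for three compounding reasons. First, $a+\eta\notin W^p_{0,\delta+2,\gamma}$ (constants are not in the weighted space when $\delta+2>0$), so the barrier theorem of Section~\ref{secbar} and the linear isomorphism Theorem~\ref{thinv} no longer apply to the perturbed problem; in particular the source $a_\eta/(\varphi^\eta)^{2^*+1}$ carries a non-decaying term and one cannot conclude $\varphi^\eta - A\in W^p_{2,\delta,\gamma}$, let alone a uniform bound in that space. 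Second, the admissible sub-solution constant must satisfy $(\|h\|_\infty+\|b\|_\infty)\epsilon^{2^*+2}\leq\eta$, so $\epsilon(\eta)\to 0$ as $\eta\to 0^+$ and the claimed uniform lower bound degenerates, leaving no guard against $\varphi$ touching zero in the limit. Third, even if a limit existed in some weak sense, there is no mechanism in your argument keeping it bounded below by a positive constant.

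The paper avoids all of this by taking $\varphi_-$ to solve the regular auxiliary problem $\Delta_g\varphi_- + h\varphi_- + b\varphi_-^{2^*-1}=0$, which has no singular $a/\varphi^{2^*+1}$ term; the barrier method applies to it with the constant barriers $0$ and any constant $>A$ (the remark that $l>0$ is not needed when no negative powers appear is exactly what permits $0$ here), yielding $\varphi_-\geq 0$ with $\varphi_-\to A$ at infinity. Since $a\geq 0$, this $\varphi_-$ is automatically a sub-solution of the full equation. Strict positivity of $\varphi_-$ is then not derived from the size of $a$ at all; it comes from Aleksandrov's unique continuation theorem applied to the linear equation $\Delta_g\varphi + (h+b\varphi_-^{2^*-2})\varphi = 0$: if $\varphi_-$ vanished at an interior minimum it would vanish identically, contradicting $\varphi_-\to A>0$. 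That decoupling of positivity from $a$ is the idea your argument is missing.
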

		\begin{proof}
			\par We identify a subsolution $\varphi_{-}\leq A$ and a supersolution $\varphi_{+}\geq A$ to the equation, where $\varphi_{-}\in W^p_{2,\delta,\gamma}$ and $\varphi_{+}\in W^p_{2,\delta,\gamma}$ solve
			\begin{equation}\label{subsolution eq}
				\Delta_g \varphi_{-}+h\varphi_{-}+b\varphi_{-}^{\qq-1}=0
			\end{equation}
			and
			\begin{equation}\label{supersolution eq}
				\Delta_g\varphi_{+}-\frac{a}{\varphi_+^{\qq+1}}=0,
			\end{equation}
			respectively. In order to solve both \eqref{subsolution eq} and \eqref{supersolution eq}, we once more apply the barrier method, and are thus tasked with finding subsolutions and supersolutions for each of them.
			\par \textit{The subsolution $\varphi_{-}$ solving \eqref{subsolution eq}}. The subsolution of equation \eqref{subsolution eq} is chosen to be as the constant  $\psi^{-}=0$. The supersolution of equation \eqref{subsolution eq} is a constant $\psi^{+}>A$. Therefore, we can always find a non-negative solution $\varphi_{-}$. Finally, the positivity of $\varphi_{-}$ follows from Alecksandrov's uniqueness theorem (Theorem 1.7, \cite{Can79}, a special case of Theorem A, \cite{smi62}). Consider the linear equation 
			\begin{equation*}
				\Delta_g \varphi+(h+b\varphi_{-}^{\qq-2})\varphi=0,
			\end{equation*}
			which clearly accepts both $\varphi_{-}$ and $0$ as solutions. If there exists $p\in M$ such that $\varphi_{-}(p)= 0$, then this is a minimum and $\nabla\varphi_{-}=0$. By Alecksandrov's uniqueness theorem,  $\varphi_{-}\equiv 0$, which contradicts the fact that $\varphi_{\lambda_0}$ tends to $A>0$ at infinity.
			\par \textit{The supersolution $\varphi_+$ which solves \eqref{supersolution eq}.} We fix the subsolution $\psi^{-}=A$ to \eqref{supersolution eq}. We find a supersolution $\psi^{+}$ as the limit of a sequence constructed iteratively as follows: let $\psi_0=A$, and $\psi_i=A+u_i$ 
			\begin{equation*}
				\Delta_g u_i =\frac{a}{\psi_{i-1}^{\qq+1}}.
			\end{equation*}
			We prove iteratively that $u_i\geq 0$.  We get that
			\begin{equation*}
				\|u_i\|_{W^p_{2,\delta,\gamma}}\leq C\left\| \frac{a}{(u_{i-1}+A)^{\qq+1}}\right\|_{W^p_{0,\delta+2,\gamma}}\leq C\|a\|_{W^p_{0,\delta+2,\gamma}}.
			\end{equation*}
			We can thus extract a subsequence that converges in $W^p_{1,\delta',\gamma'}$ to $\varphi_{+}-A\in W^p_{2,\delta,\gamma}$, with $\delta'<\delta, \gamma'<\gamma$. Then $\varphi_{+}$ is then a solution of \eqref{supersolution eq}.
		\end{proof}

		\subsection{The coupled system }\label{secfix}
		\par We are ready to provide the proof for Theorem \ref{thexvac}, which treats the problem of the existence of initial data for the vacuum case. We prove it with an iteration scheme, using a compactness argument.
		\begin{proof}[Proof of Theorem \ref{thexvac}.]
			In order to prove the existence of a solution of the coupled system, we want to construct a convergent sequence $\varphi_i=A+u_i$, defined by iteration. 
			We start with $\varphi_0=0$. If $\varphi_{i-1}$ is a function, bounded by a constant $M$, we use Theorem \ref{thinv} to define $W_i \in W^p_{2,\delta,\gamma}$ to be the unique solution of the linear equation, 
			\begin{equation*}
				\Delta_g W_{i}=\frac{d-1}{d}\varphi_{i}^{\qq}d\tau.
			\end{equation*}
			It satisfies the estimate
			\begin{equation}\label{estWi}
				\|W_i\|_{W^p_{2,\delta,\gamma}}\leq CM\|d\tau\|_{W^p_{0,\delta+2,\gamma}}.
			\end{equation}
			Then we define the scalar field $\varphi_i$ to be the solution, given by Theorem \ref{thham}, of
			\begin{equation*}
				\frac{4(d-1)}{d-2}\Delta_g \varphi_i + R_g\varphi_i= -\frac{d-1}{d}\tau^2\varphi_i^{\qq-1}+\frac{a(W_{i})}{\varphi_i^{\qq+1}},
			\end{equation*}
			where
			\begin{equation*}
				a(W_{i-1})=\vert\mathcal{L}_g W_{i-1}+U\vert^2.
			\end{equation*}
			The solution $\varphi_i$ can be written
			$\varphi_i=A+u_i$, with $u_i \in W^2_{p,\delta,\gamma}$. To show the compactness of the sequence, it is sufficient to show that $u_i$ is uniformly bounded in $W^2_{p,\delta,\gamma}$.
			Solving the scalar equation requires the existence of sub and super solutions at each iteration. In fact, the most straightforward way to go about it is to use the same sub and super solution at each iteration, such that $\varphi_{-}\leq \varphi_i\leq \varphi_{+}$. 
			It will ensure that the $\varphi_i$ are all uniformly bounded by the same constant $M=\sup \varphi_{+}$.
			The subsolution $\varphi_{-}$ solves the same equation \eqref{subsolution eq} as before (it does not depend on $W_i$ !).  As for the supersolution, we ask that it solves
			\begin{equation*}
				\Delta_g \varphi_{+}=\frac{\mathcal{A}}{\varphi_{+}^{\qq+1}},
			\end{equation*}
			where $\mathcal{A}\in W^p_{0,\delta+2,\gamma}$, fixed, is to be chosen sufficiently large such that for all $i$, $a(\varphi_i)\leq \mathcal{A}$, in order to ensure that $\varphi_+$ stays a supersolution each iteration. Therefore, by default, 
			\begin{equation*}
				\mathcal{A}>{\vert U\vert^2}.
			\end{equation*}
			Thanks to the embeding $W^p_{1,\delta+1,\gamma} \subset C^0_\rho$ with $\rho<\min(\gamma, \delta+1+\frac{n}{p})$, we can write
			$$|\q L_g W| \leq C\frac{\|W_i\|_{W^p_{2,\delta,\gamma}}}{\langle x \rangle^{2\rho}} \leq \frac{CM\ep}{\langle x \rangle^{2\rho}} ,$$
			where we have used \eqref{estWi} the hypothesis $\|d\tau\|_{W^p_{0,\delta+2,\gamma}}\leq \ep$. We choose $\ep$ small enough in order to have $C\ep M \leq 1$.
			Then we can write
			$$a(W_{i-1}) \leq 2 |U|^2+\frac{2}{\langle x \rangle^{2\rho}}  = \q A.$$
			In this way, we have defined a fixed function $\q A$, which belongs to $W^p_{0,\delta+2,\gamma}$ if $2\rho>\delta+2+\frac{n}{p}$. This condition is satisfied under the conditions $\delta+\frac{n}{p}>0$ and $2\gamma >\delta+2+\frac{n}{p}$.
			We have consequently constructed a uniform supersolution $\varphi_+$.
			
			\par Now that we have proved the functions $\varphi_i$ are uniformly bounded, from bellow and from above, we have a uniform bound for $	\|W_i\|_{W^p_{2,\delta,\gamma}}$, and the elliptic estimate yield a uniform bound for $\|u_i\|_{W^p_{2,\delta,\gamma}}$.
			By compactness, we can extract a subsequence $\varphi_{f(i)}$ which converges uniformly in $C^0$ to some $\varphi$, and such that $\varphi_{f(i)}-A$ converges weakly in $W^p_{2,\delta,\gamma}$. Then $W_{f(i)}$ converges strongly in $W^2_{2,\delta,\gamma}$ to some $W$, and the limit $(\varphi,W)$ is a solution to the constraint equations. 
			
		\end{proof} 
		\bibliographystyle{alpha} 
		\bibliography{refs} 
	\end{document}